\documentclass[reqno]{amsart}
\usepackage[left=1in,right=1in,top=1in,bottom=1in]{geometry}
\setlength{\headheight}{23pt}

 \usepackage{microtype}
\usepackage{hyperref}
         
\usepackage{amsmath}             
\usepackage{amsfonts}             
\usepackage{amsthm}               
\usepackage{amsbsy}
\usepackage{amssymb}
\usepackage{amsthm}
\usepackage[nobysame]{amsrefs}

\usepackage{enumerate}

\newtheorem{thm}{Theorem}[section]
\newtheorem{lem}[thm]{Lemma}
\newtheorem*{lem*}{Lemma}
\newtheorem{prop}[thm]{Proposition}
\newtheorem{cor}[thm]{Corollary}

\newtheorem{ques}[thm]{Question}

\theoremstyle{definition}
\newtheorem{defn}[thm]{Definition}

\newtheorem{rem}[thm]{Remark}
\newtheorem{exam}[thm]{Example}

\newcommand{\bC}{{\mathbb{C}}}
\newcommand{\bN}{{\mathbb{N}}}
\newcommand{\bR}{{\mathbb{R}}}
\newcommand{\bZ}{{\mathbb{Z}}}

\newcommand{\A}{{\mathcal{A}}}
\newcommand{\B}{{\mathcal{B}}}
\newcommand{\C}{{\mathcal{C}}}

\newcommand{\F}{{\mathcal{F}}}
\renewcommand{\H}{{\mathcal{H}}}

\newcommand{\J}{{\mathcal{J}}}

\renewcommand{\L}{{\mathcal{L}}}
\newcommand{\M}{{\mathcal{M}}}

\renewcommand{\P}{{\mathcal{P}}}

\newcommand{\U}{{\mathcal{U}}}



\renewcommand{\phi}{\varphi}


\newcommand{\fM}{{\mathfrak{M}}}

\newcommand{\qand}{\quad\text{and}\quad}
\newcommand{\qqand}{\qquad\text{and}\qquad}

\newcommand{\alg}{\mathrm{alg}}

\newcommand{\Tr}{\mathrm{Tr}}
\renewcommand{\th}{\mathrm{th}}
\newcommand{\diag}{\mathrm{diag}}
\newcommand{\op}{\mathrm{op}}

\newcommand{\sa}{\mathrm{sa}}
\newcommand{\smbfe}{{X_1,\ldots, X_n \sqcup Y_1, \ldots, Y_m}}
\newcommand{\orb}{\mathrm{orb}}

\newcommand{\set}[1]{\left\{#1\right\}}
\newcommand{\ang}[1]{\left\langle#1\right\rangle}
\newcommand{\paren}[1]{\left(#1\right)}
\newcommand{\abs}[1]{\left|#1\right|}
\newcommand{\norm}[1]{\left\|#1\right\|}

\usepackage{tikz}
\usetikzlibrary{shapes,snakes,calc,arrows}
\usetikzlibrary{decorations.pathreplacing,shapes.geometric}
\usetikzlibrary{calc,positioning}
\tikzset{Box/.style={very thick, rounded corners}}
\tikzset{marked/.style={star, star point height = .75mm, star points =5, fill=black,minimum size=2mm, inner sep=0mm} }
\tikzset{verythickline/.style = {line width=7pt}}
\tikzset{thickline/.style = {line width=5pt}}
\tikzset{medthick/.style = {line width=3pt}}
\tikzset{med/.style = {line width=2pt}}
\tikzset{count/.style = {fill=white,circle,draw,thin, inner sep=2pt}}
\tikzset{rcount/.style = {fill=white,rectangle,draw,thin,inner sep=2pt, rounded corners}}
\tikzset{cpr/.style = {draw,fill=white,rectangle,thin, rounded corners}}

\definecolor{ggreen}{HTML}{7FDD99}

\begin{document}

\nocite{*}

\title[Bi-Free Entropy: Microstates]{Analogues of Entropy in Bi-Free Probability Theory: Microstates}

\author{Ian Charlesworth}
\address{Department of Mathematics, University of California, Berkeley, California, 94720, USA}
\email{ilc@math.berkeley.edu}

\author{Paul Skoufranis}
\address{Department of Mathematics and Statistics, York University, 4700 Keele Street, Toronto, Ontario, M3J 1P3, Canada}
\email{pskoufra@yorku.ca}

\subjclass[2010]{46L54, 46L53, 47B80, 94A17}
\date{\today}
\keywords{bi-free probability, entropy}
\thanks{The research of the second author was supported in part by NSERC (Canada) grant RGPIN-2017-05711.}

\begin{abstract}
In this paper, we extend the notion of microstate free entropy to the bi-free setting.  In particular, using the bi-free analogue of random matrices, microstate bi-free entropy is defined.  Properties essential to an entropy theory are developed, such as the behaviour of the entropy when transformations on the left variables or on the right variables are performed.  In addition, the microstate bi-free entropy is demonstrated to be additive over bi-free collections provided additional regularity assumptions are included and is computed for all bi-free central limit distributions.  Moreover, an orbital version of bi-free entropy is examined which provides a tighter upper bound for the subadditivity of microstate bi-free entropy and provides an alternate characterization of bi-freeness in certain settings.
\end{abstract}

\maketitle

\section{Introduction}
\label{sec:Intro}

In a series of revolutionary papers \cites{V1993, V1994, V1996, V1997, V1998-2, V1999}, Voiculescu developed free probability analogues of the notions of entropy and Fisher's information.  In particular \cite{V1994} introduced a microstate notion of free entropy. In this setting `microstates' refers to approximating the distribution of self-adjoint operators in a tracial von Neumann algebra using matrix algebras.  The notion of microstate free entropy led to many important results pertaining to free group factors, such as the absence of Cartan subalgebras \cite{V1996}, the absence of simple maximal abelian self-adjoint algebras \cite{G1997}, and the free group factors being prime \cite{G1998}.  Alternatively, an infinitesimal version of free entropy based on derivations developed in \cite{V1998-2} has also led to many developments.

Recently in \cite{V2014} Voiculescu extended the notion of free probability to simultaneously study the left and right actions of algebras on reduced free product spaces.  This so-called bi-free probability has attracted the attention of many researchers and has had numerous developments (e.g. \cites{BBGS2017, C2016, CNS2015-1, CNS2015-2, S2016-2, S2016-3, S2016-4}).  The interest surrounding bi-free probability stems from the possibility of extending the techniques of free probability to solve problems pertaining to pairs of von Neumann algebras, such as a von Neumann algebra and its commutant or the tensor product of two von Neumann algebras.

One important development in bi-free probability theory was a bi-free analogue of the connection between free probability and random matrix theory exhibited in \cites{S2016-2, S2016-3, S2016-4}.  As microstate free entropy was motivated by the connection between free probability and random matrix theory, in this paper we use the bi-free matrix models of \cites{S2016-2, S2016-3, S2016-4} to develop a notion of microstate bi-free entropy.  In our sister paper \cite{CS2017} a notion of non-microstate bi-free entropy is developed.

In addition to this introduction, this paper contains nine sections which are organized as follows.  In Section \ref{sec:Defn} we define our microstate version of bi-free entropy (Definition \ref{defn:micro-bi-free}).  This notion of entropy only applies in the tracially bi-partite setting: that is, when the left algebra commutes with the right algebra, and the state becomes tracial when restricted to the left algebra or the right algebra.  Although bi-free probability theory extends beyond the tracially bi-partite setting, many natural examples are tracially bi-partite such as pairs consisting of a type II$_1$ factor whose commutant is a type II$_1$ factor with the tracial states occurring via the same vector state from the $L_2$-space of some tracial von Neumann algebra.  Section \ref{sec:Defn} also demonstrates this notion of microstate bi-free entropy satisfies many of the natural properties of an entropy theory.

In Section \ref{sec:Trans} an analysis of how transformations affect microstate bi-free entropy is performed.  If the transformation modifies only the left variables or only the right variables, microstate bi-free entropy behaves identically to how microstate free entropy behaves. However, the behaviour of microstate bi-free entropy when a transformation mixes left and right variables is currently unknown.  This is unsurprising as such a mixing destroys the distinction of left and right variables, and so is not easy to view as a natural bi-free operation.

In Section \ref{sec:Addi} it is demonstrated, under the assumption of the existence of microstates of all orders and a limit condition, that the microstate bi-free entropy of bi-free collections is the sum of the bi-free entropies (Theorem \ref{thm:micro-bi-free-additive}).
Assuming the existence of microstates of all orders is currently a necessity for the analogous result for free entropy with the general case being at partially addressed in works such as \cite{D2016}.

In Section \ref{sec:Orbital} an orbital version of bi-free entropy is examined in a similar fashion to the orbital free entropy from \cite{U2014}.  In particular, two characterizations of orbital bi-free entropy are given and the base properties are demonstrated.  Furthermore, Theorem \ref{thm:orbital-subadditive-with-micro} provides a better bound for the difference between the joint microstate bi-free entropy and the sum of the individual microstate bi-free entropies.

In Section \ref{sec:Characterization} Theorem \ref{thm:orbital-bi-free-characterization} is demonstrated, which characterizes when pairs of algebras with finite-dimensional approximants are bi-free in terms of the orbital bi-free entropy.  In addition, it is shown in Corollary \ref{cor:additive-implies-bi-free}  that if collections of left and right operators have finite microstate bi-free entropy and the joint bi-free entropy is the sum of the individual bi-free entropies, then the collections are bi-free.

In Section \ref{sec:Calc} computations pertaining to microstate bi-free entropy are performed.  In particular, the value of the microstate bi-free entropy is computed for all finite bi-free central limit distributions.  This computation is non-trivial due to the same complications as in Section \ref{sec:Trans}.  It is worthy to note that the microstate bi-free entropy for bi-free central limit distributions has the same form as Gaussian distributions with respect to the Shannon entropy and the free central limit distributions with respect to free entropy.  Furthermore, the same value is obtained for non-microstate bi-free entropy in our sister paper \cite{CS2017}.

In Section \ref{sec:Entropy-Dimension} we develop the notion of microstate bi-free entropy dimension and show that for a bi-free central limit distribution pair that this dimension is equal to the dimension of the support of their joint distribution.  In Section \ref{sec:Gen} we discuss generalizing this microstate version of bi-free entropy to non-bi-partite systems and the resulting complications.  Finally, in Section \ref{sec:Ques}, several open questions are discussed, most of which might be possible to solve from a deeper understanding of the structure of free and/or bi-free microstates.

Note it is not the intent of this paper to reprove every single fact about microstate free entropy to the bi-free setting, but show most of the base and some interesting results carry-forward.

\section{Definition and Basic Properties}
\label{sec:Defn}

In \cite{V1991} Voiculescu observed a connection between random matrix theory and free probability.  Specifically it was demonstrated that the eigenvalue distribution of certain random matrices asymptotically tended to the free central limit distributions, and random matrices with independent entries tended in law to freely independent operators.  However other distributions can be approximated using the eigenvalues of matrices.  In an attempt to understand these approximations, Voiculescu introduced the notion of free entropy defined as follows.

\begin{defn}[\cite{V1994}]
\label{defn:micro-free}
Let $(\fM, \tau)$ be a tracial von Neumann algebra and let $X_1, \ldots, X_n \in \fM$ be self-adjoint operators.  Let $(\M_d, \tau_d)$ denote the tracial von Neumann algebra consisting of the $d \times d$ complex matrices with the normalized trace $\tau_d$.  We will use $\Tr_d$ to denote the unnormalized trace on $\M_d$ and $\M_d^{\sa}$ to denote the self-adjoint elements of $\M_d$.

For $M,d \in \bN$ and $R, \epsilon> 0$, let $\Gamma_R(X_1, \ldots, X_n; M, d, \epsilon)$ denote the set of all $n$-tuples $(A_1, \ldots, A_n) \in (\M_d^{\sa})^n$ such that $\left\|A_j\right\| \leq R$ for all $1 \leq j \leq n$ and
\[
\left|\tau(X_{i_1} \cdots X_{i_p}) - \tau_d(A_{i_1} \cdots A_{i_p})\right| < \epsilon
\]
for all $i_1, \ldots, i_p \in \{1,\ldots, n\}$ and $1 \leq p \leq M$.  Subsequently, if $\lambda_{d,n}$ denotes the Lebesgue measure on $(\M_d^{\sa})^n$ where $(\M_d^{\sa})^n$ is equipped with the Hilbert-Schmidt norm
\[
\left\|(A_1, \ldots, A_n)\right\|_{\mathrm{HS}} = \Tr_d(A_1^2 + \cdots + A_n^2),
\]
define
\begin{align*}
\chi_R(X_1, \ldots, X_n; M, d, \epsilon) &= \log\left( \lambda_{d,n}\left(\Gamma_R(X_1, \ldots, X_n; M, d, \epsilon)  \right)   \right) \\
\chi_R(X_1, \ldots, X_n; M, \epsilon) &= \limsup_{d \to \infty} \frac{1}{d^2}\chi_R(X_1, \ldots, X_n; M, d, \epsilon) + \frac{1}{2} n \log(d) \\
\chi_R(X_1, \ldots, X_n) &=  \inf\{\chi_R(X_1, \ldots, X_n; M, \epsilon) \, \mid \, M \in \bN, \epsilon > 0\}, \text{ and}\\
\chi(X_1, \ldots, X_n) &= \sup_{R > 0} \chi_R(X_1, \ldots, X_n).
\end{align*}
The quantity $\chi(X_1, \ldots, X_n) \in [-\infty, \infty)$ is called the \emph{free entropy of $X_1, \ldots, X_n$}.  The reason for the constants and various normalizations can be seen in \cite{V1994} or the computations in Section \ref{sec:Calc}.
\end{defn}

As even some bi-free central limit distributions fail to be tracial (see, e.g., \cite{C2016}*{Example 11}) we must replace microstates with a version which can approximate non-tracial distributions in order to deal with the bi-free setting.
Rather than allow arbitrary non-tracial states on the matrices, though, we seek to progress in a way that recognizes the distinction between left and right variables.
This leads us to the idea of microstates consisting of bounded linear maps on $\M_d$ given by left and right matrix multiplication operators; that is, for $A \in \M_d$, we define $L(A)$ and $R(A)$ to be the bounded linear maps on $\M_d$ defined by
\[
L(A)B = AB \qqand R(A)B = BA.
\]
We then equip the bounded linear maps on $\M_d$ with the state $\tau_d(\cdot I_d)$ which evaluates the linear maps when applied to the identity matrix and then computes the trace of the result.

Of course, these choices force some restrictions upon us.  In particular, as left matrix multiplication commutes with right matrix multiplication, we can only find microstates for so-called bi-partite families where all left variables commute with all right variables (in distribution). Furthermore, $\tau_d(\cdot I_d)$ is tracial when restricted to left multiplication operators or right multiplication operators, so we will only be able to produce microstates for distributions having this property.
We shall refer to systems satisfying the above as \emph{tracially bi-partite}, and give some indication of how to broaden this setting in Section \ref{sec:Gen}.

\begin{defn}\label{defn:micro-bi-free}
	Let $(\A, \varphi)$ be a C$^*$-non-commutative probability space and let $X_1, \ldots, X_n, Y_1, \ldots, Y_m$ be self-adjoint operators in $\A$.
	For $M, d \in \bN$ and $R, \epsilon > 0$, let $\Gamma_R(\smbfe; M, d, \epsilon)$ denote the set of all $(n+m)$-tuples $(A_1, \ldots, A_n, B_1, \ldots, B_m) \in \paren{\M_d^\sa}^{n+m}$ such that $\norm{A_i}, \norm{B_j} \leq R$ for all $1 \leq i \leq n$ and $1 \leq j \leq m$, such that
	\[
		\abs{\varphi(Z_{k_1}\cdots Z_{k_p}) - \tau_d(C_{k_1}\cdots C_{k_p}(I_d))} < \epsilon
	\]
	for all $1 \leq p \leq M$ and $k_1, \ldots, k_p \in \set{1, \ldots, n+m}$, where
	\[
		Z_{k} = \begin{cases}
			X_k & \text{if } k \in\{1\ldots, n\} \\
			Y_{k-n} & \text{if }k \in \{n+1,\ldots, n+m\}
		\end{cases}
		\qqand 
		C_{k} = \begin{cases}
			L\paren{A_k} & \text{if } k \in\{1\ldots, n\} \\
			R\paren{B_{k-n}} & \text{if }k \in \{n+1,\ldots, n+m\}
		\end{cases} \in B(\M_d).
	\]

	With $\lambda_{d, p}$ still standing for the Lebesgue measure on $\paren{\M_d^\sa}^p$ equipped with the Hilbert-Schmidt norm, we successively define
	\begin{align*}
		\chi_R(\smbfe; M, d, \epsilon) &= \log\left( \lambda_{d,n+m}\left(\Gamma_R(\smbfe; M, d, \epsilon)  \right)   \right) \\
		\chi_R(\smbfe; M, \epsilon) &= \limsup_{d \to \infty} \frac{1}{d^2}\chi_R(\smbfe; M, d, \epsilon) + \frac{1}{2} (n+m) \log(d) \\
		\chi_R(\smbfe) &=  \inf\{\chi_R(\smbfe; M, \epsilon) \, \mid \, M \in \bN, \epsilon > 0\}, \text{ and} \\
		\chi(\smbfe) &= \sup_{R > 0} \chi_R(\smbfe).
	\end{align*}
	The quantity $\chi(\smbfe)$ will be called the \emph{microstate bi-free entropy of $X_1, \ldots, X_n \sqcup Y_1, \ldots, Y_m$}.  We will see in Proposition \ref{prop:not-plus-infinity} that $\chi(\smbfe) \in [-\infty, \infty)$.
\end{defn}

\begin{rem}
\label{rem:only-a-certain-order-of-microstates-matters}
By analysing the joint distribution of $L(A_1), \ldots, L(A_n), R(B_1),\ldots, R(B_n)$ and the definition of $\chi(\smbfe)$, we see that $\chi(\smbfe) = -\infty$ unless we are in the tracially bi-partite setting.  We will make this the standing assumption until Section \ref{sec:Gen} of the paper.
This is a setting that many canonical examples fit into and thus is of great interest.
Note we will not assume that $\varphi$ is tracial on $\A$ nor faithful on $\A$ as these properties need not occur in most bi-free systems (see \cite{BBGS2017} and \cite{R2017} respectively).

Using the fact that the system is bi-partite, the definition of $\Gamma_R(\smbfe; M, d, \epsilon)$ may be simplified slightly, as it is enough to check that only certain moments are well-approximated: indeed, for $M,d \in \bN$ and $R, \epsilon> 0$ notice $\Gamma_R(\smbfe; M, d, \epsilon)$ is the set of all $(n+m)$-tuples $(A_1, \ldots, A_n, B_1, \ldots, B_m) \in (\M_d^{\sa})^{n+m}$ such that $\left\|A_i\right\|, \left\|B_j\right\| \leq R$ for all $1 \leq i \leq n$ and $1 \leq j \leq m$, and
\[
\left|\varphi(X_{i_1} \cdots X_{i_p}Y_{j_1} \cdots Y_{j_q}) - \tau_d(A_{i_1} \cdots A_{i_p} B_{j_q} \cdots B_{j_1})\right| < \epsilon
\]
for all $i_1, \ldots, i_p \in \{1,\ldots, n\}$ and $j_1, \ldots, j_q \in \{1,\ldots, m\}$ with $p+ q \leq M$.
\end{rem}

\begin{rem}
\label{rem:monobifree-is-free}
It is elementary to see based on the definition of microstate bi-free entropy that if $m = 0$ then
\[
\chi(\smbfe) = \chi(X_1, \ldots, X_n),
\]
whence the above notion of bi-free entropy is an extension of microstate free entropy.
Further, it can be readily verified that
\[
	(A_1, \ldots, A_n, B_1, \ldots, B_m) \in \Gamma_R(\smbfe; M, d, \epsilon)
\]
if and only if
\[
	(B_1^t, \ldots, B_m^t, A_1^t, \ldots, A_n^t) \in \Gamma_R(Y_1, \ldots, Y_m \sqcup X_1, \ldots, X_n; M, d, \epsilon).
\]
It follows that $\chi(\smbfe) = \chi(Y_1, \ldots, Y_m \sqcup X_1, \ldots, X_n)$ as transpose preserves Lebesgue measure, and in particular when $n = 0$ we have
\[
	\chi(\smbfe) = \chi(Y_1, \ldots, Y_m).
\]
\end{rem}

\begin{prop}
\label{prop:micro-subadditive}
If $0 \leq p \leq n$ and $0 \leq q \leq m$ then
\begin{align*}
\chi(\smbfe) &\leq  \chi(X_1, \ldots, X_p\sqcup Y_1,\ldots Y_q) + \chi(X_{p+1}, \ldots, X_n\sqcup Y_{q+1},\ldots Y_m).
\end{align*}
In particular,
\[
\chi(\smbfe) \leq \chi(X_1, \ldots, X_n) + \chi(Y_1, \ldots, Y_m).
\]
\end{prop}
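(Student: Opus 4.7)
The plan is to adapt the classical subadditivity argument for microstate free entropy, using the projection of microstates onto their marginal coordinates together with Fubini.

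First, I would observe that the projection map
\[
	\pi : (\M_d^\sa)^{n+m} \to (\M_d^\sa)^{p+q} \times (\M_d^\sa)^{(n-p)+(m-q)}
\]
sending $(A_1,\ldots,A_n,B_1,\ldots,B_m)$ to the pair
\[
	\bigl((A_1,\ldots,A_p,B_1,\ldots,B_q),\ (A_{p+1},\ldots,A_n,B_{q+1},\ldots,B_m)\bigr)
\]
carries $\Gamma_R(\smbfe;M,d,\epsilon)$ into the product
\[
	\Gamma_R(X_1,\ldots,X_p\sqcup Y_1,\ldots,Y_q;M,d,\epsilon)\times \Gamma_R(X_{p+1},\ldots,X_n\sqcup Y_{q+1},\ldots,Y_m;M,d,\epsilon).
\]
This is immediate from the simplified form of the microstate condition recorded in Remark \ref{rem:only-a-certain-order-of-microstates-matters}: any mixed moment in only the first (respectively the second) collection of variables is a mixed moment of the full collection, and so is automatically approximated to within $\epsilon$ for words of length at most $M$.

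Next, because $\pi$ is a coordinate projection, the Lebesgue measure $\lambda_{d,n+m}$ agrees with the product $\lambda_{d,p+q}\otimes\lambda_{d,(n-p)+(m-q)}$, and hence
\[
	\lambda_{d,n+m}\bigl(\Gamma_R(\smbfe;M,d,\epsilon)\bigr) \leq \lambda_{d,p+q}(\Gamma_R^{(1)})\cdot \lambda_{d,(n-p)+(m-q)}(\Gamma_R^{(2)}),
\]
where $\Gamma_R^{(1)}$ and $\Gamma_R^{(2)}$ are the two marginal microstate sets above. Taking logarithms, dividing by $d^2$, and using the decomposition
\[
	\tfrac12(n+m)\log d = \tfrac12(p+q)\log d + \tfrac12((n-p)+(m-q))\log d,
\]
I then pass to the $\limsup$ as $d\to\infty$; here I use the standard fact that $\limsup(a_d+b_d)\leq\limsup a_d+\limsup b_d$ to split the sum. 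Taking the infimum over $M$ and $\epsilon$ (this infimum distributes over a sum of two infima, since the two marginal quantities depend on independent parameters, and any joint choice bounds each marginal) gives
\[
	\chi_R(\smbfe) \leq \chi_R(X_1,\ldots,X_p\sqcup Y_1,\ldots,Y_q)+\chi_R(X_{p+1},\ldots,X_n\sqcup Y_{q+1},\ldots,Y_m).
\]

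Finally, to pass to the supremum over $R$, note that each of the two $R$-cutoff marginal entropies is monotone non-decreasing in $R$. Hence for every fixed $R$ the right-hand side is bounded above by the sum of the two suprema, so taking the supremum over $R$ on the left yields the desired inequality. The specialization to the statement with $p=q=0$ (or $p=n$, $q=m$) follows from Remark \ref{rem:monobifree-is-free}, which identifies the bi-free entropy of a purely-left or purely-right family with the ordinary free entropy. There is no serious obstacle in this argument; the only point requiring a small amount of care is the bookkeeping with the normalization $\frac12(n+m)\log d$ and the monotonicity in $R$ needed to exchange the outer supremum with the sum.
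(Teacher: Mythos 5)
Your proof follows essentially the same route as the paper's: show $\Gamma_R(\smbfe;M,d,\epsilon)$ sits inside the $\times_{\ell r}$-product of the two marginal microstate sets, factor the Lebesgue measure, and push through the logs, $\limsup$, infimum, and supremum. The one place your justification is a bit off is the parenthetical for why the infimum over $(M,\epsilon)$ distributes: the observations that ``the two marginal quantities depend on independent parameters'' and ``any joint choice bounds each marginal'' only give $\inf_{M,\epsilon}(f+g)\geq \inf f + \inf g$, which is the wrong direction. What actually makes the step go through is that $\chi_R(\cdot\sqcup\cdot;M,\epsilon)$ is monotone decreasing as $M$ increases and $\epsilon$ decreases, so the infimum is a limit along the directed family of $(M,\epsilon)$ and hence $\inf_{M,\epsilon}(f+g)=\inf f+\inf g$. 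The paper leaves this implicit (it says simply ``clearly the above inequalities hold''), but the monotonicity is the correct justification and you should state it rather than the two clauses you gave. Everything else in your writeup is fine and matches the paper's argument.
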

\begin{proof}
First note that the inequality will be demonstrated provided we can show that
\begin{align*}
\chi_R &(\smbfe; M, d, \epsilon) \\
&\leq \chi_R(X_1, \ldots, X_p\sqcup Y_1,\ldots Y_q; M, d, \epsilon) + \chi_R(X_{p+1}, \ldots, X_n\sqcup Y_{q+1},\ldots Y_m; M, d, \epsilon)
\end{align*}
for all $M, d$, and $\epsilon$.  Since by definitions we have that
\begin{align*}
\Gamma_R &(\smbfe; M, d, \epsilon)  \\
& \subseteq \Gamma_R(X_1, \ldots, X_p\sqcup Y_1,\ldots Y_q; M, d, \epsilon) \times_{\ell r} \Gamma_R(X_{p+1}, \ldots, X_n\sqcup Y_{q+1},\ldots Y_m; M, d, \epsilon)
\end{align*}
where 
\[
(A_1, \ldots, A_p, B_1, \ldots, B_q) \times_{\ell r} (A_{p+1}, \ldots, A_n, B_{q+1},\ldots, B_m) = (A_1, \ldots, A_n, B_1, \ldots, B_m),
\]
clearly the above inequalities hold.
\end{proof}

These inequalities allow us to import upper bounds on entropy from the free case.  In particular, we learn that the bi-free entropy never takes the value $+\infty$.

\begin{prop}
	\label{prop:not-plus-infinity}
	Let $C^2 = \varphi(X_1^2+\cdots+X_n^2+Y_1^2+\cdots+Y_m^2)$.
	Then
\[\chi(X_1, \ldots, X_n \sqcup Y_1, \ldots, Y_m) \leq \frac{n+m}{2}\log\left(\frac{2\pi e}{n+m}C^2\right).\]
\end{prop}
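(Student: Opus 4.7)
The plan is to reduce to the known upper bound on microstate free entropy in terms of second moments, using the subadditivity established in Proposition~\ref{prop:micro-subadditive}, and then to combine the two free-entropy bounds via concavity of the logarithm.

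First, by Proposition~\ref{prop:micro-subadditive} applied with $p = n$ and $q = 0$, together with Remark~\ref{rem:monobifree-is-free} which identifies $\chi(X_1, \ldots, X_n \sqcup \emptyset)$ with $\chi(X_1, \ldots, X_n)$ and similarly for the $Y$'s, we obtain
\[
\chi(X_1, \ldots, X_n \sqcup Y_1, \ldots, Y_m) \leq \chi(X_1, \ldots, X_n) + \chi(Y_1, \ldots, Y_m).
\]
Now I would invoke Voiculescu's well-known second-moment bound on microstate free entropy from \cite{V1994}, namely
\[
\chi(X_1, \ldots, X_n) \leq \frac{n}{2}\log\!\left(\frac{2\pi e}{n}\,\varphi(X_1^2 + \cdots + X_n^2)\right),
\]
and the analogous inequality for $Y_1, \ldots, Y_m$. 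Setting $a = \varphi(X_1^2 + \cdots + X_n^2)$ and $b = \varphi(Y_1^2 + \cdots + Y_m^2)$, this gives
\[
\chi(\smbfe) \leq \frac{n}{2}\log\!\left(\frac{2\pi e\, a}{n}\right) + \frac{m}{2}\log\!\left(\frac{2\pi e\, b}{m}\right).
\]

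The final step is to package these two terms into the desired single logarithm using concavity of $\log$. Taking the convex combination with weights $\frac{n}{n+m}$ and $\frac{m}{n+m}$, Jensen's inequality yields
\[
\frac{n}{n+m}\log\!\left(\frac{a}{n}\right) + \frac{m}{n+m}\log\!\left(\frac{b}{m}\right) \leq \log\!\left(\frac{a+b}{n+m}\right),
\]
which after multiplying by $\frac{n+m}{2}$ and adding $\frac{n+m}{2}\log(2\pi e)$ to both sides produces exactly $\frac{n+m}{2}\log\!\left(\frac{2\pi e}{n+m}C^2\right)$, since $a + b = C^2$.

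There is really no substantial obstacle here: the argument is essentially a two-line consequence of subadditivity plus the classical free case, and the only small point to verify carefully is the Jensen step, which follows from strict concavity of the logarithm. A mild alternative route would be to apply subadditivity all the way down to singletons and use $\chi(Z) \leq \tfrac{1}{2}\log(2\pi e\,\varphi(Z^2))$ for each self-adjoint $Z$, then apply Jensen once on $n+m$ terms; this leads to the same bound and avoids citing the multivariable free entropy inequality.
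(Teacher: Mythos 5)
Your proof is correct and follows essentially the same route as the paper: subadditivity from Proposition~\ref{prop:micro-subadditive} to reduce to the free case, Voiculescu's second-moment bound from \cite{V1994}*{Proposition 2.2}, and concavity of the logarithm to merge the two terms. The Jensen step is verified exactly as in the paper's argument, so there is nothing to add.
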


\begin{proof}
	We recall that the analogous free statement was shown in \cite{V1994}*{Proposition 2.2}.
	Let 
	\[
	C_X^2 = \varphi(X_1^2 + \cdots + X_n^2) \qqand C_Y^2 = \varphi(Y_1^2+\cdots+Y_m^2).
	\]	
	Using the above, Proposition \ref{prop:micro-subadditive}, and the concavity of the logarithm, we obtain that
	\begin{align*}
		\chi(X_1, \ldots, X_n \sqcup Y_1, \ldots, Y_m)
		&\leq \chi(X_1, \ldots, X_n) + \chi(Y_1, \ldots, Y_m)\\
		&\leq \frac{1}{2} n \log\left(\frac{2\pi e}{n} C_X^2\right) + \frac{1}{2} m \log\left(\frac{2\pi e}{m} C_Y^2\right) \\
		&= \frac{n+m}{2}\left(\frac{n}{n+m} \log\left(\frac{2\pi e}{n} C_X^2\right) + \frac{m}{n+m} \log\left(\frac{2\pi e}{m} C_Y^2\right)\right) \\
		&\leq \frac{n+m}{2}  \log\left(\frac{n}{n+m}\frac{2\pi e}{n}C_X^2 + \frac{m}{n+m}\frac{2\pi e}{m}C_Y^2\right) \\
		&= \frac{n+m}{2}\log\left(\frac{2\pi e}{n+m}C^2\right). \qedhere
	\end{align*}
\end{proof}

There is a more interesting inequality relating the microstate bi-free entropy to the microstate free entropy.  In particular, the microstate bi-free entropy is bounded below by the microstate free entropy obtained by changing all of the right variables to left variables.

\begin{thm}
\label{thm:micro-converting-rights-to-lefts}
Let $\left(\{X_i\}^n_{i=1}, \{Y_j\}^m_{j=1}\right)$ be tracially bi-partite, self-adjoint operators in a C$^*$-non-commutative probability space $(\A, \varphi)$.  Suppose there exists another C$^*$-non-commutative probability space $(\A_0, \tau_0)$  and self-adjoint operators $X'_1, \ldots, X'_n, Y'_1, \ldots, Y'_m \in \A_0$ such that $\tau_0$ is tracial on $\A_0$ and
\[
\varphi(X_{i_1} \cdots X_{i_p} Y_{j_1} \cdots Y_{j_q}) = \tau_0(X'_{i_1} \cdots X'_{i_p} Y'_{j_q} \cdots Y'_{j_1})
\]
for all $p,q \in \bN \cup \{0\}$, $i_1, \ldots, i_p \in \{1,\ldots, n\}$, and $j_1, \ldots, j_q \in \{1, \ldots, m\}$. 
 Then
\[
\chi(X'_1, \ldots, X'_n, Y'_1, \ldots, Y'_m) \leq \chi(\smbfe).
\]
\end{thm}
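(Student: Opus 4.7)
The plan is to establish the containment
\[
\Gamma_R(X'_1, \ldots, X'_n, Y'_1, \ldots, Y'_m; M, d, \epsilon) \subseteq \Gamma_R(\smbfe; M, d, \epsilon)
\]
for arbitrary $R, M, d$, and $\epsilon$. Once this is in hand, monotonicity of Lebesgue measure gives the corresponding inequality on $\chi_R(\,\cdot\,;M,d,\epsilon)$, and then successively taking $\limsup_{d\to\infty}$ (with the same normalization $\frac{1}{2}(n+m)\log d$ on both sides), the infimum over $M,\epsilon$, and the supremum over $R$ delivers the theorem.

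To verify the inclusion, I would start from the simplified description in Remark~\ref{rem:only-a-certain-order-of-microstates-matters}: it suffices to check, for a given tuple $(A_1, \ldots, A_n, B_1, \ldots, B_m) \in (\M_d^\sa)^{n+m}$ with norms bounded by $R$, that
\[
\left|\varphi(X_{i_1}\cdots X_{i_p} Y_{j_1}\cdots Y_{j_q}) - \tau_d(A_{i_1}\cdots A_{i_p} B_{j_q}\cdots B_{j_1})\right| < \epsilon
\]
for all index sequences with $p+q \leq M$. This reduction rests on the facts that $L(A)$ and $R(B)$ commute on $\M_d$ and that $L(A_1)\cdots L(A_p) R(B_1)\cdots R(B_q)(I_d) = A_1\cdots A_p B_q \cdots B_1$, together with the bi-partite identity $X_iY_j = Y_jX_i$ that lets one collect left variables to the left and right variables to the right in any abstract moment.

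Now the hypothesis relating $\varphi$ to $\tau_0$ rewrites the left side as
\[
\left|\tau_0(X'_{i_1}\cdots X'_{i_p} Y'_{j_q}\cdots Y'_{j_1}) - \tau_d(A_{i_1}\cdots A_{i_p} B_{j_q}\cdots B_{j_1})\right|,
\]
which is one of the moment conditions defining membership in the classical free microstate set $\Gamma_R(X'_1, \ldots, X'_n, Y'_1, \ldots, Y'_m; M, d, \epsilon)$. Hence any element of this free microstate set automatically satisfies every required bi-free moment condition, proving the desired containment.

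I do not anticipate a serious obstacle; the argument is essentially a translation between the two microstate formulations. The only point requiring care is tracking the order reversal of the $Y'$-variables, which is exactly accounted for by the way $R(B)$ acts on $I_d$ (multiplying on the right, so composed right multiplications produce a reversed product) and matches the reversed $Y'$ order built into the hypothesis.
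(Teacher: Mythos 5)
Your proposal is correct and matches the paper's proof exactly: both establish the containment $\Gamma_R(X'_1, \ldots, X'_n, Y'_1, \ldots, Y'_m; M, d, \epsilon) \subseteq \Gamma_R(\smbfe; M, d, \epsilon)$ via the simplified characterization in Remark~\ref{rem:only-a-certain-order-of-microstates-matters} together with the moment hypothesis, and then pass through the limits. You simply spell out the details that the paper leaves implicit, including the order reversal bookkeeping for the right variables.
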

\begin{proof}
	Using the characterization from the end of Remark \ref{rem:only-a-certain-order-of-microstates-matters}, we see that
\[
	\Gamma_R(X'_1, \ldots, X'_n, Y'_1, \ldots, Y'_m; M, d, \epsilon) \subseteq \Gamma_R(\smbfe; M, d, \epsilon),
\]
and hence
\[
	\chi(X'_1, \ldots, X'_n, Y'_1, \ldots, Y'_m) \leq \chi(\smbfe). \qedhere
\]
\end{proof}

This inequality, in essence, arises because the set of bi-free microstates is defined with fewer conditions than the set of free microstates.  In addition, as we need only specify certain moments for the ``one-sided'' family for a given pair of faces and as many of the moments can be chosen somewhat arbitrarily, Theorem \ref{thm:micro-converting-rights-to-lefts} provides many possible lower bounds.

\begin{exam}
\label{exam:changing-semis-to-free}
For an example application of Theorem \ref{thm:micro-converting-rights-to-lefts}, let $\F(\H)$ be the Fock space on a real Hilbert space $\H$, let $e_1, e_2\in \H$ be unit vectors, and let $S_1 = l(e_1) + l^*(e_1)$ and $D_2 = r(e_2) + r^*(e_2)$, where $l$ and $l^*$ are the left creation/annihilation operators respectively, and $r$ and $r^*$ are the right creation/annihilation operators respectively.  If $c = \langle e_1, e_2\rangle$ and if $S_2 = l(e_2) + l^*(e_2)$, then Theorem \ref{thm:micro-converting-rights-to-lefts} implies that
\[
\chi(S_1 \sqcup D_2) \geq \chi(S_1, S_2).
\]
Notice that if $c \in (-1, 1)$ then
\[
e_3 := \frac{1}{1-c^2}\left(e_2 - c e_1\right)
\]
is a unit vector orthogonal to $e_1$, and so if $S_3 = l(e_3) + l^*(e_3)$, then $S_1$ and $S_3$ are freely independent centred semicircular variables of variance one while
\[
\begin{bmatrix}
1 & 0 \\ -c & \sqrt{1-c^2}
\end{bmatrix} \begin{bmatrix}
S_1 \\ S_3
\end{bmatrix} = \begin{bmatrix}
S_1 \\ S_2
\end{bmatrix}.
\]
Therefore, by \cite{V1993}*{Proposition 3.5 and Proposition 5.4} (or the analogous Proposition \ref{prop:transforms} in this paper), we obtain that
\begin{align*}
\chi(S_1 \sqcup D_2) \geq \chi(S_1, S_2) &= \chi(S_1, S_3) + \log\left(\left| \det\left(\begin{bmatrix}
1 & 0 \\ -c & \sqrt{1-c^2}
\end{bmatrix}\right)  \right|  \right) \\
&= \chi(S_1) + \chi(S_3) + \log(\sqrt{1-c^2}) \\
&= 2\chi(S_1) + \frac12\log\paren{1-c^2}.
\end{align*}
It will be shown in Theorem \ref{thm:entropy-pair-of-semis} that this inequality is actually an equality.
\end{exam}

Like with free entropy, the upper bound on the norm of microstates $R$ can be controlled.

\begin{prop}
\label{prop:R-does-not-matter}
Let 
\[
	\rho = \max\paren{\{\left\|X_i\right\| \, \mid \, 1 \leq i \leq n\} \cup \{\left\|Y_j\right\| \, \mid \, 1 \leq j \leq m\}}.
\]
If $R_2 > R_1 > \rho$, then 
\[
\chi_{R_2}(\smbfe) = \chi_{R_1}(\smbfe).
\]
In particular, for all $R > \rho$, 
\[
\chi_R(\smbfe) = \chi(\smbfe).
\]
\end{prop}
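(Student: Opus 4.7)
The inequality $\chi_{R_1}(\smbfe) \leq \chi_{R_2}(\smbfe)$ is straightforward: from $R_1 \leq R_2$ one has the containment $\Gamma_{R_1}(\smbfe; M, d, \epsilon) \subseteq \Gamma_{R_2}(\smbfe; M, d, \epsilon)$ at every level of $M$, $d$, and $\epsilon$, and the associated monotonicity of Lebesgue measure propagates through each step of Definition \ref{defn:micro-bi-free}. The ``in particular'' clause will then be automatic: since $R \mapsto \chi_R(\smbfe)$ is nondecreasing on $(0, \infty)$ and, once the main equality is established, constant on $(\rho, \infty)$, the supremum $\chi(\smbfe) = \sup_{R>0} \chi_R(\smbfe)$ must equal $\chi_R(\smbfe)$ at every $R > \rho$.

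For the reverse inequality $\chi_{R_2}(\smbfe) \leq \chi_{R_1}(\smbfe)$, the plan is to carry over Voiculescu's original argument for the corresponding free-entropy statement (\cite{V1994}*{Proposition 2.4}) essentially verbatim, using the simplified form of the microstate condition from Remark \ref{rem:only-a-certain-order-of-microstates-matters}, which includes as special cases (take $q=0$ or $p=0$) the single-variable high even moment conditions $\tau_d(A_i^{2P}) \approx \varphi(X_i^{2P})$ and $\tau_d(B_j^{2P}) \approx \varphi(Y_j^{2P})$. Fixing an auxiliary $R_1' \in (\rho, R_1)$, I would first combine these moment conditions with $\varphi(X_i^{2P}), \varphi(Y_j^{2P}) \leq \rho^{2P}$ to obtain the spectral concentration bound
\[
\tau_d\paren{1_{(R_1', \infty)}\paren{\abs{A_i}}} \leq \paren{\rho/R_1'}^{2P} + \epsilon(R_1')^{-2P}
\]
and the analogous bound for each $B_j$. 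Given any small $\eta > 0$, choosing $P$ sufficiently large and then $\epsilon$ sufficiently small makes both bounds at most $\eta$, so every microstate in $\Gamma_{R_2}(\smbfe; M, d, \epsilon)$ has each of its $n+m$ component matrices admitting at most $\eta d$ many eigenvalues of absolute value exceeding $R_1'$.

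Next I would build a truncation map: fix an $R_1$-Lipschitz function $f \colon \bR \to [-R_1, R_1]$ with $f(t) = t$ on $[-R_1', R_1']$, and apply $f$ coordinate-wise to a microstate in $\Gamma_{R_2}$. The resulting tuple has all norms bounded by $R_1$ and differs from the original in each coordinate by a self-adjoint matrix of rank at most $\eta d$ and norm at most $R_2$. A standard telescoping Cauchy--Schwarz estimate on products then gives that the truncated tuple lies in $\Gamma_{R_1}(\smbfe; M, d, \epsilon')$ for some $\epsilon' \to 0$ as $\eta \to 0$. The harder step is to convert this truncation into a Lebesgue volume comparison of the shape
\[
\lambda_{d,n+m}\paren{\Gamma_{R_2}(\smbfe; M, d, \epsilon)} \leq \exp\paren{O((n+m)\eta d^2 \log(1/\eta))}\, \lambda_{d,n+m}\paren{\Gamma_{R_1}(\smbfe; M, d, \epsilon')},
\]
which follows by covering the set of self-adjoint $d \times d$ matrices of rank at most $\eta d$ and norm at most $R_2$ by $\exp(O(\eta d^2 \log(1/\eta)))$ balls in the Hilbert--Schmidt norm. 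Taking $\tfrac{1}{d^2}\log$, then $\limsup_{d\to\infty}$, and finally $\eta \to 0$ (by letting $M \to \infty$ and $\epsilon \to 0$) absorbs the overcounting factor into the $o(d^2)$ correction and delivers the desired inequality.

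The main obstacle is precisely this covering/volume-counting step: one must carefully quantify how low-rank perturbations contribute to the Lebesgue measure on $\paren{\M_d^{\sa}}^{n+m}$ and verify that the $\eta \log(1/\eta)$ factor is genuinely negligible against the $d^2$ normalization uniformly in $d$. Since the truncation acts coordinate-wise, no mixing between left and right variables is introduced, and the bi-partite moment conditions impose no additional difficulty beyond what is already handled in the free-entropy argument.
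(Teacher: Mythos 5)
Your spectral concentration step (bounding the fraction of eigenvalues outside $[-R_1',R_1']$ via high even moments, uniformly in $d$ once $M_1$ and $\epsilon_1$ are chosen) matches the paper exactly, and the easy direction plus the ``in particular'' clause are fine. The gap is precisely where you flagged it: the covering/volume-counting step does not close as stated.

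The set of self-adjoint matrices in $\M_d^{\sa}$ of rank at most $\eta d$ and operator norm at most $R_2$ has Hilbert--Schmidt diameter of order $R_2\sqrt{\eta d}$ (not $R_2$) and real dimension of order $\eta d^2$, so its covering number $N$ by HS-balls of radius $\delta$ satisfies $\log N \asymp \eta d^2\bigl(\log R_2 + \tfrac12\log(\eta d) - \log\delta\bigr)$. With $\delta$ fixed and $\eta$ fixed as $d\to\infty$ (as they must be, since $\eta$ is determined by $M$ and $\epsilon$, not by $d$), the term $\eta d^2\log d$ survives the $\tfrac1{d^2}$ normalization, so your claimed factor $\exp\bigl(O(\eta d^2\log(1/\eta))\bigr)$ is wrong. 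Taking $\delta\asymp\sqrt{d}$ cancels the $\log d$, but then the translates $\Gamma_{R_1}+B_\delta$ are no longer moment-close to $\Gamma_{R_1}$ and cannot be related back to $\chi_{R_1}$. The paper sidesteps this entirely by taking $g$ to be an increasing, piecewise-linear \emph{bijection} of $[-R_2,R_2]$ onto $[-R_1,R_1]$ fixing $[-R_0,R_0]$ (rather than a saturating, non-injective clip like your $f$), so that $A\mapsto g(A)$ is injective and the change-of-variables formula applies. In eigenvalue coordinates the Jacobian is $g'(c_1)\cdots g'(c_d)\prod_{i<j}\tfrac{g(c_i)-g(c_j)}{c_i-c_j}$; factors with both eigenvalues in $[-R_0,R_0]$ equal $1$, and the remaining $O(\delta d^2)$ factors are each bounded below by the $d$-independent constant $\tfrac{R_1-R_0}{R_2-R_0}$, giving $\tfrac1{d^2}\log|\mathcal{J}| = O(\delta)$ uniformly in $d$. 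That per-pair uniformity is exactly what the covering route loses, since the HS-diameter of the low-rank perturbation set grows with $d$ while the per-pair Jacobian contribution does not. To repair your outline, replace the clip by the paper's injective piecewise-linear $g$ and replace the covering step with this Jacobian lower bound.
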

\begin{proof}
As $\chi_R$ is an increasing function of $R$, it suffices to prove the first equality.  The proof of said equality will follow a similar proof to that of \cite{V1994}*{Proposition 2.4}.

Fix $R_2 > R_1 > R_0 > \rho$ and define $g : [-R_2, R_2] \to [-R_1, R_1]$ to be the function which is linear on $[-R_2, -R_0]$, $[-R_0, R_0]$, and $[R_0, R_2]$, and such that $g(-R_2) = -R_1$, $g(-R_0) = -R_0$, $g(R_0) = R_0$, and $g(R_2) = R_1$.  Furthermore, for $A_1,\ldots, A_n, B_1, \ldots, B_m \in \M_d^{\sa}$ with $\left\|A_i\right\| \leq R_2$ and $\left\|B_j\right\| \leq R_2$, let
\[
G(A_1, \ldots, A_n, B_1, \ldots, B_m) = (g(A_1), \ldots, g(A_n), g(B_1), \ldots, g(B_m)).
\]

Given $M \in \bN$ and $\epsilon > 0$, it is not difficult to see that there exists an $M_1 \geq M$ and a $0 < \epsilon_1 < \epsilon$ such that
\[
G(\Gamma_{R_2}(\smbfe; M_1, d, \epsilon_1)) \subseteq \Gamma_{R_1}(\smbfe; M, d, \epsilon)
\]
for all $d \in \bN$.  Indeed for any 
\[
(A_1,\ldots, A_n, B_1, \ldots, B_m) \in \Gamma_{R_2}(\smbfe; M_1, d, \epsilon_1)
\]
we obtain that
\[
|\tau_d(A_i^p)|, |\tau_d(B_j^p)| \leq \rho^p + \epsilon_1
\]
for all $1 \leq i \leq n$, $ 1 \leq j \leq m$, and $1 \leq p \leq M_1$.
Thus given $\delta > 0$, choosing $M_1$ large and $\epsilon_1$ small enough yields
\[
\tau_d(P_{[-R_2, -R_0] \cup [R_0, R_2]}(A_i)), \tau_d(P_{[-R_2, -R_0] \cup [R_0, R_2]}(B_j)) < \delta
\]
where $P_{[-R_2, -R_0] \cup [R_0, R_2]}$ is denoting the spectral projection onto $[-R_2, -R_0] \cup [R_0, R_2]$, and thus can be selected even smaller still to make
\[
\left\|g(A_i) - A_i\right\|_1, \left\|g(B_j) - B_j\right\|_1 < \delta
\]
for all $1 \leq i \leq n$ and $1 \leq j \leq m$ independent of $d$.  As $M$ and $R_2$ are fixed, by selecting $\delta$ sufficiently small we obtain that the trace of any word of length at most $M$ in $g(A_1), \ldots, g(A_n), g(B_1), \ldots, g(B_m)$ is within a function of $\delta$, $M$, and $R_2$ which tends to 0 as $\delta$ tends to 0 to the trace of the corresponding word in $A_1, \ldots, A_n, B_1, \ldots, B_m$.  Thus the claim follows.

To complete the proof, it will suffice to obtain a specific lower bound on the Jacobian of $G$ on  
\[
\Gamma_{R_2}(\smbfe ; M_1, d, \epsilon_1).
\]
Let $U(d)$ denote the set of unitary elements of $\M_d$ and consider the change of coordinates from $\M_d^\sa$ to $(U(d)/ \mathbb{T}) \times \{(c_1, \ldots, c_d) \in \bR^d \, \mid \, c_1 < \cdots < c_d\}$ (where $\mathbb{T}$ is the torus of diagonal unitaries) defined by $(U, D) \mapsto U^*DU$ where $D = \diag(c_1,\ldots, c_d)$.  This change of coordinates places the Lebesgue measure in the form
\[
K \left(\prod_{1 \leq i < j \leq d} (c_i - c_j)\right) \,d\gamma_{d, 0} d\lambda_d
\]
where $K$ is a  normalizing constant and $\gamma_{d, 0}$ is the Haar measure on $U(d)/\mathbb{T}$.  The absolute value of the Jacobian of the map $C \mapsto g(C)$ is easily seen to be
\[
g'(c_1) \cdots g'(c_d) \prod_{1 \leq i < j \leq d} \frac{g(c_i) - g(c_j)}{c_i - c_j} 
\]
when $C$ has eigenvalues $c_1, \ldots, c_d$ and $c_k \neq \pm R_0$ for all $k$.  

Let $\delta > 0$ be arbitrary.  If $M_1$ is large enough and $\epsilon_1$ is small enough, we obtain that
\[
\tau_d(P_{[-R_2, -R_0] \cup [R_0, R_2]}(C)) < \delta
\]
and thus we obtain 
\[
\left(\frac{R_1 - R_0}{R_2 - R_0}\right)^{d+d^2-(d(1-\delta))^2}
\]
as a lower bound for the Jacobian of $g$ on a coordinate projection of $\Gamma_{R_2}(\smbfe; M_1, d, \epsilon_1)$.  In particular a lower bound for the Jacobian of $G$ can be obtained by taking the above lower bound for the Jacobian of $g$ raised to the $(n+m)^\th$ power and thus
\begin{align*}
\chi_{R_1}(\smbfe; m, d,\epsilon)& \geq \chi_{R_2}(\smbfe; M_1, d, \epsilon_1) \\ & \qquad+ (n+m)(d + d^2(2\delta - \delta^2)) \log\left( \frac{R_1 - R_0}{R_2 - R_0}  \right).
\end{align*}
Hence it follows that
\begin{align*}
\chi_{R_1}(\smbfe; m, \epsilon) &\geq \chi_{R_2}(\smbfe; M_1,\epsilon_1)  \\
& \qquad+ (n+m)(2\delta - \delta^2) \log\left( \frac{R_1 - R_0}{R_2 - R_0}  \right).
\end{align*}
Therefore, as $\delta > 0$ was arbitrary, the result follows.
\end{proof}

\begin{rem}
\label{rem:R-does-not-matter}
The proof of Proposition \ref{prop:R-does-not-matter} can be extended further.  Indeed let $R_1, \ldots, R_n, R'_1, \ldots, R'_m > 0$ and 
\[
\Gamma_{R_1, \ldots, R_n, R'_1,\ldots, R'_m}( \smbfe ; M, d, \epsilon)
\]
be defined like $\Gamma_{R}( \smbfe ; M, d, \epsilon)$ where instead of $\left\|A_i\right\|, \left\|B_j\right\| \leq R$ for all $i, j$, we only require $\left\|A_i\right\| \leq R_i$ and $\left\|B_j\right\| \leq R'_j$ for all $i,j$. If we extend the notion of $\chi_R( \smbfe )$ to $\chi_{R_1, \ldots, R_n, R'_1,\ldots, R'_m}( \smbfe ) $,  then the same proof as Proposition \ref{prop:R-does-not-matter} can be used to show that if $R_i > \left\|X_i\right\|$ and $R'_j > \left\|Y_j\right\|$ for all $i,j$, then 
\[
\chi_{R_1, \ldots, R_n, R'_1,\ldots, R'_m}( \smbfe ) = \chi( \smbfe ).
\]

In fact, we note that \cite{BB2003} refined the techniques of \cite{V1994}*{Proposition 2.4} to demonstrate that if one lets $R = \infty$ in the start of Definition \ref{defn:micro-free}, then the same value of the microstate free entropy is obtained.  By repeating their results verbatim with the obvious modifications in our context identical to those used above in Proposition \ref{prop:R-does-not-matter}, we note that setting $R = \infty$ from the start of Definition \ref{defn:micro-bi-free} yields the same quantity for the microstate bi-free entropy.
\end{rem}

On the other hand, insisting on using microstates of bounded norm allows us the following proposition.
\begin{prop}
\label{prop:limit-of-distributions}
Let $\left(\{X_i\}^n_{i=1}, \{Y_j\}^m_{j=1}\right)$ and $\left(\left\{X_i^{(k)}\right\}^n_{i=1}, \left\{Y_j^{(k)}\right\}^m_{j=1}\right)$ for $k \in \bN$ be tracially bi-partite tuples in a C$^*$-non-commutative probability space $(\A, \varphi)$.  Suppose that $\left(\left\{X_i^{(k)}\right\}^n_{i=1}, \left\{Y_j^{(k)}\right\}^m_{j=1}\right)$ converges in distribution to $\left(\{X_i\}^n_{i=1}, \{Y_j\}^m_{j=1}\right)$; that is
\[
\lim_{k \to \infty} \varphi\left(X^{(k)}_{i_1} \cdots X^{(k)}_{i_p} Y^{(k)}_{j_1} \cdots Y^{(k)}_{j_q}\right) = \varphi(X_{i_1} \cdots X_{i_p} Y_{j_1} \cdots Y_{j_q})
\]
for all $i_1, \ldots i_p \in \{1,\ldots, n\}$, $j_1, \ldots, j_q \in \{1,\ldots, m\}$, and $p,q \in \bN$.  Then
\[
\limsup_{k \to \infty} \chi_R\left(X^{(k)}_1, \ldots, X^{(k)}_n \sqcup Y^{(k)}_1, \ldots, Y^{(k)}_m\right) \leq \chi_R(\smbfe ).
\]
Moreover, if $\sup_{k \in \bN} \left\|X_i^{(k)}\right\| < \infty$ for all $1 \leq i \leq n$ and $\sup_{k \in \bN} \left\|Y_k^{(k)}\right\| < \infty$ for all $1 \leq j \leq m$, then
\[
\limsup_{k \to \infty} \chi(X^{(k)}_1, \ldots, X^{(k)}_n \sqcup Y^{(k)}_1, \ldots, Y^{(k)}_m) \leq \chi(\smbfe ).
\]
\end{prop}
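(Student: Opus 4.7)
The strategy is to leverage convergence in distribution to obtain a set-theoretic inclusion of microstate sets that is uniform in the matrix dimension $d$, and then to propagate this inclusion through the monotone definitions of $\chi_R$.

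Fix $R > 0$, $M \in \bN$, and $\epsilon > 0$. Since there are only finitely many words of length at most $M$ in $n+m$ letters, the convergence-in-distribution hypothesis supplies an integer $k_0 = k_0(M, \epsilon)$ such that
\[
\abs{\varphi(Z_{i_1} \cdots Z_{i_p}) - \varphi\paren{Z^{(k)}_{i_1} \cdots Z^{(k)}_{i_p}}} < \frac{\epsilon}{2}
\]
for every $k \geq k_0$, every $p \leq M$, and every index sequence, where $Z^{(k)}$ and $Z$ abbreviate the combined $X, Y$ tuples as in Definition \ref{defn:micro-bi-free}. A direct application of the triangle inequality to the moment-comparison condition defining membership in a microstate set then gives
\[
\Gamma_R\paren{X_1^{(k)}, \ldots, X_n^{(k)} \sqcup Y_1^{(k)}, \ldots, Y_m^{(k)}; M, d, \tfrac{\epsilon}{2}} \subseteq \Gamma_R(\smbfe; M, d, \epsilon)
\]
for every $d \in \bN$ and every $k \geq k_0$. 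Taking logarithms of Lebesgue measures, dividing by $d^2$, adding $\tfrac{n+m}{2}\log d$, and passing to $\limsup_{d \to \infty}$ preserves this inequality, so that
\[
\chi_R\paren{X_1^{(k)}, \ldots, X_n^{(k)} \sqcup Y_1^{(k)}, \ldots, Y_m^{(k)}; M, \tfrac{\epsilon}{2}} \leq \chi_R(\smbfe; M, \epsilon)
\]
for all $k \geq k_0$. Since the left-hand side dominates $\chi_R(X_1^{(k)}, \ldots, X_n^{(k)} \sqcup Y_1^{(k)}, \ldots, Y_m^{(k)})$ for each such $(M, \epsilon/2)$, taking $\limsup_{k \to \infty}$ first and then the infimum over $M$ and $\epsilon$ on the right yields the first assertion.

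For the moreover claim, let $R_0$ be any common upper bound for $\{\norm{X_i^{(k)}} : 1 \leq i \leq n,\, k \in \bN\} \cup \{\norm{Y_j^{(k)}} : 1 \leq j \leq m,\, k \in \bN\}$, which exists by hypothesis, and apply the first part with any $R > R_0$: Proposition \ref{prop:R-does-not-matter} identifies the bounded-norm microstate bi-free entropy with its unrestricted counterpart on the $(X^{(k)}, Y^{(k)})$ side for each $k$, while $\chi_R(\smbfe) \leq \chi(\smbfe)$ holds by the very definition of $\chi$ on the limit side. No real obstacle arises: the entire argument is a monotonicity/inclusion style semi-continuity, with the only bookkeeping concern being the standard halving of the moment tolerance to absorb the convergence-in-distribution error, which is harmless since $\epsilon$ is ultimately sent to zero via an infimum.
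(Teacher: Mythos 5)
Your proof is correct and follows essentially the same route as the paper's: both deduce a set-theoretic inclusion of microstate sets from the finite-moment convergence (you halve the tolerance on the approximating side rather than doubling it on the limit side, a cosmetic difference), pass it through the logarithm/$\limsup_d$/infimum chain to get the $\chi_R$ inequality, and then invoke Proposition~\ref{prop:R-does-not-matter} together with the uniform norm bound to upgrade to the unrestricted $\chi$ in the second claim.
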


\begin{proof}
	Our convergence assumption tells us that all moments are converging to the correct values, and so for any $M \in \bN$ and $\epsilon > 0$ we have for large enough $k$ that 
	\[
		\Gamma_R\left(X^{(k)}_1, \ldots, X^{(k)}_n \sqcup Y^{(k)}_1, \ldots, Y^{(k)}_m; M, d, \epsilon\right) \subseteq \Gamma_R(\smbfe; M, d, 2\epsilon),
	\]
	since the sets involved see only finitely many moments.
	Hence for all sufficiently large $k$, we have
	\[
		\chi_R\left(X^{(k)}_1, \ldots, X^{(k)}_n \sqcup Y^{(k)}_1, \ldots, Y^{(k)}_m; M, d, \epsilon\right) \leq \chi_R(\smbfe; M, d, 2\epsilon)
	\]
	and passing through the appropriate limits and rescaling in $d$, then $M$, and then $\epsilon$ yields
	\[
		\limsup_{k \to \infty} \chi_R\left(X^{(k)}_1, \ldots, X^{(k)}_n \sqcup Y^{(k)}_1, \ldots, Y^{(k)}_m\right) \leq \chi_R(\smbfe)
	\]
	which is the first claimed inequality.
	The second inequality follows by applying Proposition~\ref{prop:R-does-not-matter}.
\end{proof}

\section{Transformations}
\label{sec:Trans}

One important property of the microstate free entropy is the ability to apply a non-commutative functional calculus to the self-adjoint operators and control the value of the free entropy.  In this section, we will develop an analogue of this result for our microstate bi-free entropy.  However, due to the distinction between the left and right operators, we will need to focus on transformations that modify only left variables or modify only right variables (although compositions of such transforms is allowed).  

To understand the difficulty in mixing left and right variables, consider the $n = m = 1$ case.  If
\[
(A, B) \in \Gamma_R(X \sqcup Y; M, d, \epsilon)
\]
and we wanted to consider the new pair $(X, Y + cX)$ for $c$ sufficiently small, it is incredibly unclear whether 
\[
(A, B+cA) \in \Gamma_R(X \sqcup Y+cX; M', d, \epsilon')
\]
as the assumptions on $(A, B)$ yield only information about $\tau_d(A^pB^q)$ for $1 \leq p+q \leq M$ whereas we require knowledge about $\tau_d(A^p(B+cA)^q)$.  The latter involves terms of the form $\tau_d(A^{i_1}B^{i_2}A^{i_3}\cdots B^{i_j})$ and direct information about these moments appears difficult to extract from knowledge of only $\tau_d(A^pB^q)$.

In order to develop our results, we recall some information from \cite{V1994}.  However, as the proofs are near identical, we refer the reader to \cite{V1994} on most occasions.

Let $x_1,\ldots, x_n$ be non-commuting indeterminates and let
\[
F(x_1, \ldots, x_n) = \sum^\infty_{k=1} \sum_{1 \leq i_1, \ldots, i_k \leq n} c_{i_1, \ldots, i_k} x_{i_1} \cdots x_{i_k}
\]
be a non-commuting power series with complex coefficients.  If $R_i \geq 0$ for all $1 \leq i \leq n$, it is said that $(R_1, \ldots, R_n)$ is a \emph{multiradius of convergence of $F$} if
\[
M(F; R_1, \ldots, R_n) := \sum^\infty_{k=1} \sum_{1 \leq i_1, \ldots, i_k \leq n} |c_{i_1, \ldots, i_k}| R_{i_1} \cdots R_{i_k} < \infty.
\]
If $X_1, \ldots, X_n$ are elements in a finite factor $(\fM, \tau)$ and $(\left\|X_1\right\|, \ldots, \left\|X_n\right\|)$ is a multiradius of convergence of $F$, then $F(X_1, \ldots, X_n)$ is well-defined with
\[
\left\|F(X_1,\ldots, X_n)\right\| \leq M(F; \left\|X_1\right\|, \ldots, \left\|X_n\right\|).
\]

If $(R_1, \ldots, R_n)$ is a multiradius of convergence of $F$, then the map taking $(X_1, \ldots, X_n)$ to $F(X_1, \ldots, X_n)$ is an analytic function on 
\[
\prod_{1 \leq i \leq n} \{X_i \in \fM \, \mid \, \left\|X_i\right\| \leq R_i\}
\]
with values in $\fM$.  If this map is denoted $F$, then $F$ is differentiable with derivative denoted by $DF$, and the positive Jacobian of $F$ at $(X_1, \ldots, X_n)$ can be defined by
\[
|\mathcal{J}|(F)(X_1, \ldots, X_n) = |\det|(DF(X_1,\ldots, X_n)),
\]
where $|\det|$ denotes the Fuglede-Kadison determinant.
Note that $DF(X_1, \ldots, X_n)$ lies in the algebra denoted in \cite{V1994} by $LR(\fM)$, which is the image in $B(\fM)$ of the projective tensor product $\fM \otimes_\pi \fM^{\op}$ under the contraction $a\otimes b \mapsto L_aR_b$ (where $L_a$ denotes left multiplication on $\fM$ by $a$ and $R_b$ denotes right multiplication on $\fM$ by $b$).

Finally, as our focus is on self-adjoint operators, we will focus on $F$ where $F^* =F$; that is $\overline{c_{i_1, \ldots, i_k}} = c_{i_k, \ldots, i_1}$ for all $k$ and $1 \leq i_1,\ldots, i_k \leq n$.

\begin{prop}
\label{prop:transforms}
Let $(\A, \varphi)$ be a C$^*$-non-commutative probability space and let $$\left(\{X_i\}^n_{i=1}, \{Y_j\}^m_{j=1}\right)$$ be a tracially bi-partite collection of self-adjoint operators such that $(\alg(X_1,\ldots, X_n), \varphi)$ sits inside a finite factor.  Let $F_1, \ldots, F_n, G_1, \ldots, G_n$ be non-commutative power series with complex coefficients such that $F_i^* = F_i$, $G_i^* = G_i$, $(\left\|X_1\right\| + \epsilon, \ldots, \left\|X_n\right\| + \epsilon)$ is a multiradius of convergence for the $F_i$'s for some $\epsilon > 0$, and
\[
(M(F_1; \left\|X_1\right\| + \epsilon, \ldots, \left\|X_n\right\| + \epsilon), \ldots, M(F_n; \left\|X_1\right\| + \epsilon, \ldots, \left\|X_n\right\| + \epsilon))
\]
is a multiradius of convergence for the $G_j$'s.  Assume further that
\[
G_i(F_1(x_1, \ldots, x_n), \ldots, F_n(x_1,\ldots, x_n)) = x_i
\]
for all $1 \leq i \leq n$.  Then
\begin{align*}
\chi & (F_1(X_1, \ldots, X_n), \ldots, F_n(X_1, \ldots, X_n) \sqcup Y_1,\ldots, Y_n) \\
& \geq \log\left(|\mathcal{J}|((F_1, \ldots, F_n))(X_1, \ldots, X_n) \right) + \chi(X_1,\ldots, X_n \sqcup Y_1, \ldots, Y_m).
\end{align*}
Moreover, if  $N_k = \left\|F_k(X_1, \ldots, X_n)\right\|$, then
\[
(M(G_1; N_1 + \epsilon, \ldots, N_n + \epsilon), \ldots, M(G_n; N_1 + \epsilon, \ldots, N_n + \epsilon))
\]
is a multiradius of convergence for the $F_i$'s, then
\begin{align*}
\chi & (F_1(X_1, \ldots, X_n), \ldots, F_n(X_1, \ldots, X_n) \sqcup Y_1,\ldots, Y_n) \\
&= \log\left(|\mathcal{J}|((F_1, \ldots, F_n))(X_1, \ldots, X_n) \right) + \chi(X_1,\ldots, X_n \sqcup Y_1, \ldots, Y_m).
\end{align*}
An analogous result holds for such functions applied to the $Y$'s instead of the $X$'s.
\end{prop}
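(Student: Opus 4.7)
The plan is to imitate Voiculescu's proof of the free analogue \cite{V1994}*{Proposition 3.5}, exploiting the fact that $F=(F_1,\ldots,F_n)$ acts only on the left variables. Consider the map
\[
\Phi_d\colon (\M_d^\sa)^{n+m}\to(\M_d^\sa)^{n+m},\qquad (A_1,\ldots,A_n,B_1,\ldots,B_m)\mapsto (F_1(A),\ldots,F_n(A),B_1,\ldots,B_m).
\]
Because the $B$-block of the derivative of $\Phi_d$ is the identity, $\det D\Phi_d$ coincides with the Jacobian of the left-only map $(A_1,\ldots,A_n)\mapsto (F_1(A),\ldots,F_n(A))$ analyzed by Voiculescu; all nontrivial analytic content lies in the $A$-coordinates.

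First I would show that for any prescribed $M\in\bN$ and $\ep>0$ there exist $M_1\in\bN$ and $\ep_1>0$ such that
\[
\Phi_d\bigl(\Gamma_R(\smbfe;M_1,d,\ep_1)\bigr)\subseteq \Gamma_{R'}\bigl(F_1(X_1,\ldots,X_n),\ldots,F_n(X_1,\ldots,X_n)\sqcup Y_1,\ldots,Y_m;M,d,\ep\bigr),
\]
where $R' = \max_k M(F_k; R,\ldots, R)+1$. By Remark \ref{rem:only-a-certain-order-of-microstates-matters} it is enough to approximate bipartite moments of the form $\tau_d(F_{i_1}(A)\cdots F_{i_p}(A)\,B_{j_q}\cdots B_{j_1})$, and truncating each $F_k$ to a polynomial whose tail is uniformly small in norm on $\{\|A_i\|\le R\}$ reduces this to polynomial moments in $(A,B)$, which are controlled directly by the defining inequalities of $\Gamma_R(\smbfe;M_1,d,\ep_1)$, exactly as in \cite{V1994}.

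Next I would apply the change-of-variables formula
\[
\lambda_{d,n+m}(\Phi_d(S)) = \int_S |\det D\Phi_d|\,d\lambda_{d,n+m}
\]
on any set $S$ where $\Phi_d$ is injective. Voiculescu's argument from \cite{V1994}*{Proposition 3.5} gives
\[
\liminf_{d\to\infty} \frac{1}{d^2}\log|\det D\Phi_d(A,B)| \geq \log|\mathcal{J}|(F)(X_1,\ldots,X_n) - \delta
\]
uniformly on the microstate set for any $\delta>0$, once the parameters are refined, because the estimate only involves the $A$-coordinates and the Fuglede-Kadison determinant is computed in the finite factor generated by the $X_i$'s. Combining this with the previous inclusion and passing through $\limsup_{d\to\infty}$, then $\inf_{M,\ep}$, then $\sup_R$ (using Proposition \ref{prop:R-does-not-matter} to absorb $R'$) yields the desired inequality.

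For the moreover clause, the chain rule $DG(F(X))\circ DF(X)=I$ together with multiplicativity of the Fuglede-Kadison determinant gives $|\mathcal{J}|(G)(F(X)) = |\mathcal{J}|(F)(X)^{-1}$, so applying the inequality to $G$ at $(F_1(X),\ldots,F_n(X))$ yields the reverse estimate. The statement for transformations applied to the $Y$'s follows from the symmetry in Remark \ref{rem:monobifree-is-free}. The main obstacle is the moment-matching step: one must verify that polynomial truncation of $F_k$ on matrices of bounded norm is compatible with the bi-free microstate condition, and the tracially bi-partite hypothesis is exactly what makes this work, reducing the verification to the one-sided (free) analogue already handled by Voiculescu.
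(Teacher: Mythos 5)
Your proposal is correct and follows essentially the same route as the paper: both treat the map as a direct sum of Voiculescu's left-variable transformation with the identity on the right block, show it carries microstates to microstates after adjusting parameters, and then invoke Voiculescu's Jacobian estimate (the paper cites \cite{V1993}*{Proposition 3.5}; you cite \cite{V1994}) together with the chain rule and Fuglede-Kadison multiplicativity for the ``moreover'' part. The only cosmetic difference is that the paper uses the per-variable radii $\Gamma_{R_1,\ldots,R_n,R'_1,\ldots,R'_m}$ from Remark \ref{rem:R-does-not-matter} to track the image cleanly, whereas you fold everything into a single $R'$ and absorb it via Proposition \ref{prop:R-does-not-matter} at the end; both bookkeeping choices work.
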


\begin{proof}
First we invoke Remark \ref{rem:R-does-not-matter}.  Let $\left\|X_i\right\| < R_i < \left\|X_i\right\| + \epsilon$, let $\left\|Y_j\right\| < R'_j$, and
\[
M(F_i; R_1,\ldots, R_n) < \rho_i \leq M(F_i; \left\|X_1\right\| + \epsilon, \ldots, \left\|X_n\right\| + \epsilon).
\]
Given $M \in \bN$, and $\epsilon > 0$, there exist an $M_1 \geq M$ and an $0 < \epsilon_1 < \epsilon$ such that the map
\[
(A_1, \ldots, A_n, B_1, \ldots, B_m) \mapsto (F_1(A_1,\ldots, A_n), \ldots, F_n(A_1,\ldots, A_n), B_1, \ldots, B_m)
\]
maps $\Gamma_{R_1,\ldots, R_n, R'_1, \ldots, R'_m}(\smbfe ; M_1, d, \epsilon_1)$ into 
\[
\Gamma_{\rho_1,\ldots, \rho_n, R'_1, \ldots, R'_m} (F_1(X_1, \ldots, X_n), \ldots, F_n(X_1, \ldots, X_n) \sqcup Y_1,\ldots Y_m; M, d, \epsilon).
\]
The remainder of the proof is identical to the proof of \cite{V1993}*{Proposition 3.5} as it simply computes how the transformation (ours being a direct sum of the one used in \cite{V1993}*{Proposition 3.5} and the identity) modifies the microstates and thus the entropy.
\end{proof}

\begin{cor}
\label{cor:micro-linear-transformations}
\begin{enumerate}
\item If $a_1, \ldots, a_n, b_1,\ldots, b_m \in \bR$, then
\[
\chi(X_1 +a_1 I, \ldots, X_n + a_n I \sqcup Y_1 + b_1I , \ldots, Y_m + b_m I) = \chi( \smbfe ).
\]
\item If $A = [a_{i,j}] \in \M_n$ and $B = [b_{i,j}] \in \M_m$ are invertible, then
\[
\chi\left(\sum^n_{k=1} a_{1,k} X_k, \ldots, \sum^n_{k=1} a_{n,k} X_k \sqcup \sum^m_{k=1} b_{1,k} Y_k, \ldots, \sum^m_{k=1} b_{m,k} Y_k    \right) = \chi (\smbfe ) + \log(|\det(A \oplus B)|).
\]
\item If $X_1,\ldots, X_n$ are linearly dependent or $Y_1, \ldots, Y_m$ are linearly dependent, then
\[
 \chi (\smbfe ) = -\infty.
\]
\end{enumerate}
\end{cor}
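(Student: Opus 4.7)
The strategy is to reduce each of the three statements to an application of Proposition \ref{prop:transforms} (or its right-variable analogue) or to a straightforward subadditivity argument. Each transformation here is affine or linear in the $X$'s (or $Y$'s) separately, so two simplifications keep the work light: the underlying non-commutative functions are polynomials (hence the convergence and multiradius hypotheses of Proposition \ref{prop:transforms} are vacuous), and the relevant positive Jacobians are scalar quantities that reduce to classical determinants.

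For part (1), I would take $F_i(x_1, \ldots, x_n) = x_i + a_i$, with inverse $G_i(x_1, \ldots, x_n) = x_i - a_i$. Since $DF$ is the identity operator on $\fM^n$, we have $|\mathcal{J}|(F) = 1$ and the log-Jacobian contribution vanishes. Applying Proposition \ref{prop:transforms} gives equality before and after the $a_i$-translation, and the right-variable analogue handles the $b_j$-translation.

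For part (2), I would apply Proposition \ref{prop:transforms} with the linear polynomials $F_i(x_1, \ldots, x_n) = \sum_{k=1}^n a_{i,k} x_k$, and similarly with $B$ on the right side. Invertibility of $A$ furnishes the inverse polynomials $G_j(x_1, \ldots, x_n) = \sum_k [A^{-1}]_{j,k} x_k$, so the proposition gives an equality in each direction. The key computation is that the positive Jacobian of a linear transformation corresponding to $A \in M_n(\bC)$ equals $|\det A|$ --- the same identification implicit in Example \ref{exam:changing-semis-to-free} when the authors convert the $2 \times 2$ determinant into the $\log(\sqrt{1-c^2})$ correction. Summing the $X$- and $Y$-side contributions gives $\log|\det A| + \log|\det B| = \log|\det(A \oplus B)|$.

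For part (3), the cleanest route is via subadditivity. Suppose without loss of generality that $X_1, \ldots, X_n$ are linearly dependent. Taking $p = n$ and $q = 0$ in Proposition \ref{prop:micro-subadditive} yields $\chi(\smbfe) \leq \chi(X_1, \ldots, X_n) + \chi(Y_1, \ldots, Y_m)$. By Remark \ref{rem:monobifree-is-free}, the first term on the right equals the classical microstate free entropy, which is $-\infty$ by the classical linear-dependence result of \cite{V1994}; the second term is finite by Proposition \ref{prop:not-plus-infinity}. Hence $\chi(\smbfe) = -\infty$.

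The main obstacle is really just the Jacobian computation underpinning part (2): verifying that the Fuglede--Kadison determinant of the block operator corresponding to $A$ is exactly $|\det A|$ rather than some power of it requires carefully unpacking the structure of $DF \in M_n(LR(\fM))$. An alternative that bypasses this is a direct change-of-variables argument on Lebesgue measure: the linear map with matrix $A$ acts on $(\M_d^\sa)^n \cong \bR^{nd^2}$ with Jacobian $|\det A|^{d^2}$, so its logarithmic contribution to $\chi_R(\cdot; M, d, \epsilon)$ is $d^2 \log|\det A|$, which yields $\log|\det A|$ after the $d^2$-normalization built into the definition of $\chi$.
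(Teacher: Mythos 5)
Your proofs of parts (1) and (2) match the paper exactly: both invoke Proposition~\ref{prop:transforms} with the obvious affine/linear polynomials, and the Jacobian computation $|\mathcal{J}|(F) = |\det A|$ is the content of the same identification the paper implicitly relies on. Your alternative change-of-variables verification ($|\det A|^{d^2}$ on $(\M_d^{\sa})^n$, divided by $d^2$ after taking logarithms) is a clean and correct sanity check on why the Fuglede--Kadison determinant of the block operator gives $|\det A|$ rather than some power of it.

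For part (3) you take a genuinely different route from the paper. You use subadditivity (Proposition~\ref{prop:micro-subadditive}) together with Remark~\ref{rem:monobifree-is-free} and Proposition~\ref{prop:not-plus-infinity} to reduce to the classical result $\chi(X_1, \ldots, X_n) = -\infty$ for linearly dependent variables from \cite{V1994}. The paper instead stays entirely internal: since $X_1, \ldots, X_n$ are linearly dependent, one can choose an invertible $A$ with $0 < |\det A| < 1$ that \emph{fixes} the tuple $(X_1, \ldots, X_n)$, and then part (2) forces $\chi(\smbfe) = \chi(\smbfe) + \log|\det A|$ with $\log|\det A| < 0$; combined with $\chi(\smbfe) < \infty$ from Proposition~\ref{prop:not-plus-infinity}, this gives $\chi(\smbfe) = -\infty$ without importing the classical linear-dependence theorem. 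Both arguments are valid. Yours leans on an external result but is perhaps the more immediate inference; the paper's is self-contained within the corollary and illustrates how the scaling in part (2) can be leveraged directly. One small imprecision in your write-up: Proposition~\ref{prop:not-plus-infinity} shows $\chi(Y_1, \ldots, Y_m) < +\infty$, not that it is finite (it could itself be $-\infty$) --- but this is exactly what is needed, since the sum $-\infty + \chi(Y_1, \ldots, Y_m)$ is then well-defined and equals $-\infty$.
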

\begin{proof}
Parts (1) and (2) follow from Proposition \ref{prop:transforms}.
In the case of part (3), if $X_1,\ldots, X_n$ are linearly dependent then there is an $A = [a_{i,j}] \in \M_n$ such that $0 < |\det(A)| < 1$ and 
\[
	\left(\sum^n_{k=1} a_{1,k} X_k, \ldots, \sum^n_{k=1} a_{n,k} X_k\right) = (X_1, \ldots, X_n).
\]
Applying part (2) along with the fact that 
\[
\chi (\smbfe ) < \infty
\]
by Proposition \ref{prop:not-plus-infinity} yields the result.
\end{proof}

\section{Additivity of Microstate Bi-Free Entropy}
\label{sec:Addi}

One important result for free entropy is additivity; that is, if $\{X_1, \ldots, X_p\}$ and $\{X_{p+1},\ldots, X_n\}$ are free then
\[
\chi(X_1, \ldots, X_n) = \chi(X_1, \ldots, X_p) + \chi(X_{p+1},\ldots, X_n)
\]
under certain regularity assumptions.  We desire to prove a bi-free analogue of this result.  Before we move to those results, we desire to analyze some limits with regards to the following concept.

\begin{defn}
A tracially bi-partite system $(\{X_i\}^n_{i=1}, \{Y_j\}^m_{j=1})$ in a C$^*$-non-commutative probability space $(\A, \varphi)$ is said to have \emph{finite-dimensional approximants} if for every $M \in \bN$, $\epsilon > 0$, and 
\[
R > \max\left\{  \max_{1 \leq i \leq n} \left\|X_i\right\|, \max_{1 \leq j \leq m} \left\|Y_j\right\| \right\},
\]
there exists an $D \in \bN$ such that $\Gamma_R(\smbfe ; M, d, \epsilon) \neq \emptyset$ for all $d \geq D$.

A single family of such variables $\set{X_i}_{i=1}^n$ is said to have \emph{finite-dimensional approximants} if $(\set{X_i}_{i=1}^n, \emptyset)$ does or, equivalently by  Remark \ref{rem:monobifree-is-free}, if $(\emptyset, \set{X_i}_{i=1}^n)$ does.
\end{defn}

\begin{rem}
\label{rem:fda}
By repeating the same ideas as in Theorem \ref{thm:micro-converting-rights-to-lefts}, the existence of microstates for tracially bi-partite systems can be often deduced from knowledge of free entropy.  Indeed suppose $(\{X_i\}^n_{i=1}, \{Y_j\}^m_{j=1})$ is a tracially bi-partite system in a C$^*$-non-commutative probability space $(\A, \varphi)$ and that there exists another C$^*$-non-commutative probability space $(\A_0, \tau_0)$  and self-adjoint operators $X'_1, \ldots, X'_n, Y'_1, \ldots, Y'_m \in \A_0$ such that $\tau_0$ is tracial on $\A_0$ and
\[
\varphi(X_{i_1} \cdots X_{i_p} Y_{j_1} \cdots Y_{j_q}) = \tau_0(X'_{i_1} \cdots X'_{i_p} Y'_{j_q} \cdots Y'_{j_1})
\]
for all $p,q \in \bN \cup \{0\}$ and $i_1, \ldots, i_p \in \{1,\ldots, n\}$ and $j_1, \ldots, j_q \in \{1, \ldots, m\}$. 
As in Theorem \ref{thm:micro-converting-rights-to-lefts},
\[
\Gamma_R(X'_1, \ldots, X'_n, Y'_1, \ldots, Y'_m; M, d, \epsilon) \subseteq \Gamma_R(\smbfe; M, d, \epsilon).
\]
Therefore if $(\{X'_i\}^n_{i=1}, \{Y'_j\}^m_{j=1})$ have finite-dimensional approximants, then so do  $(\{X_i\}^n_{i=1}, \{Y_j\}^m_{j=1})$ by Proposition \ref{prop:R-does-not-matter}.  In particular, if $\chi(X'_1, \ldots, X'_n, Y'_1, \ldots, Y'_m) > -\infty$, then $X_1, \ldots, X_n, Y_1,\ldots, Y_m$ has finite-dimensional approximants by \cite{V1998-1}*{Remark 3.2}. 

Furthermore, if 
\[
\Gamma_R(\smbfe ; M, d_0, \epsilon)  \neq \emptyset
\]
for some $d_0$, then there exists a $D$ such that
\[
\Gamma_R(\smbfe ; M, d, 2\epsilon) \neq \emptyset
\]
for all $d \geq D$.  Indeed this follows by taking $D$ to be a sufficiently large multiple of $d_0$ so that $\frac{d_0}{D}$ is sufficiently small thereby adding at most $\epsilon$ to the state estimates.  Hence, as in \cite{V1998-1}*{Remark 3.2}, it can easily be seen that if $\chi(X_1, \ldots, X_n \sqcup Y_1, \ldots Y_m) > -\infty$, then $(\{X_i\}^n_{i=1}, \{Y_j\}^m_{j=1})$ has finite-dimensional approximants. 
\end{rem}

In order to develop an additive result for microstate bi-free entropy, we will use the following notion.

\begin{defn}
	Let $(\A, \varphi)$ be a C$^*$-non-commutative probability space, let $\{\C_k\}_{k \in K}$ be a collection of finite subsets of $\A$, let $\A_k = \alg(\C_k)$, and let $\psi = \ast_{k\in K}\varphi|_{\A_k}$ be the unique state on $\ast_{k \in K} \A_k$ extending each $\varphi|_{\A_k}$ such that the $\A_k$ are free.
Given $M \in \bN$ and $\epsilon > 0$, it is said that $\{\C_k\}_{k \in K}$ are \emph{$(M, \epsilon)$-free in $(\A, \varphi)$} provided
\[
\left|\psi(Z_1 \cdots Z_p) - \varphi(Z_1 \cdots Z_p)\right| < \epsilon
\]
for all $Z_1, \ldots, Z_p \in \bigcup_{k \in K} \C_k$ and $1 \leq p \leq M$.
\end{defn}

Given $d \in \bN$, let $\U(d)$ denote the unitary group of $\M_d$ and let $\gamma_d$ denote the normalized Haar measure on $\U(d)$.  We recall the following result.

\begin{lem}[\cite{V1998-1}*{Corollary 2.13}]
\label{lem:voi-lem-about-lots-of-matrices-making-asymptotic-freeness}
Fix $R, \epsilon, \theta > 0$ and $M \in \bN$.
Then there exists an $N \in \bN$ such that for all $d \geq N$, $1 \leq p \leq M$, and sets $\C_1, \ldots, \C_p \subseteq \M_d$ of matrices bounded in norm by $R$, each containing no more than $M$ elements, we have
\begin{align*}
\mu_{d}^{\otimes p}\left(\left\{ (U_1, \ldots, U_p) \in\U(d)^p \, \left| \, {\text{the sets } U_1^*\C_1 U_1, \ldots, U_p^*\C_p U_p \text{ are }(M, \epsilon)\text{-free}} \right.\right\}   \right) > 1 - \theta.
\end{align*}
\end{lem}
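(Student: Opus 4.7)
The plan is to combine Voiculescu's convergence-in-expectation result for asymptotic freeness of independent Haar unitary conjugates with a concentration-of-measure argument on the unitary group $\U(d)$. The $(M,\epsilon)$-freeness condition, once unwound, amounts to verifying that finitely many moments agree to within $\epsilon$; I will produce uniform control of these moments and then take a union bound.

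First I would record the reduction to finitely many moment conditions. Expanding the definition, the sets $U_1^*\C_1U_1, \ldots, U_p^*\C_p U_p$ fail to be $(M,\epsilon)$-free exactly when there exist $Z_1, \ldots, Z_q$ with $q \le M$, drawn from $\bigcup_i U_i^*\C_iU_i$, such that $|\psi(Z_1\cdots Z_q) - \varphi(Z_1\cdots Z_q)|\ge \epsilon$. Since $|\C_i|\le M$ and $p\le M$, the total number of such words is bounded above by $(M^2)^M$, a constant depending only on $M$, not on $d$ or on the choice of the $\C_i$. It therefore suffices to control, uniformly in the (bounded-norm) choice of matrices, each individual word moment $W(U_1,\ldots,U_p) := \tau_d\bigl(A_1\cdots A_q\bigr)$ where each $A_j$ equals some $U_{i_j}^* C_j U_{i_j}$ with $C_j\in \C_{i_j}$ of norm $\le R$.

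Second, for each such word I would split the estimate as $|W - \psi\text{-value}|\le |W-\mathbb{E}[W]|+|\mathbb{E}[W]-\psi\text{-value}|$. For the expectation term, Voiculescu's theorem on the asymptotic freeness of the algebra generated by independent Haar unitaries and any deterministic algebra of uniformly bounded matrices (which can be proved via the Weingarten formula or via the moment-cumulant formalism) yields $|\mathbb{E}[W]-\psi\text{-value}| = O_{M,R}(1/d^2)$, with constants depending only on $M$ and $R$. For the deviation term, I would invoke the concentration of measure principle on $\U(d)$ with respect to the Hilbert--Schmidt metric: the map $(U_1,\ldots,U_p)\mapsto W(U_1,\ldots,U_p)$ is Lipschitz with constant bounded by $c(M,R) / \sqrt{d}$ (each $U_i$ appears in at most $M$ positions inside a word bounded in operator norm by $R^M$), so Gromov's isoperimetric inequality on $\U(d)$ gives $\mu_d^{\otimes p}(|W-\mathbb{E}[W]|\ge \epsilon/2) \le 2\exp\bigl(-c'(M,R)\, d\, \epsilon^2\bigr)$.

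Putting these together, for $d$ sufficiently large (depending only on $M$, $R$, $\epsilon$) the expectation is within $\epsilon/2$ of the $\psi$-value, and taking a union bound over the $(M^2)^M$ word conditions shows that the total failure probability is at most $(M^2)^M \cdot 2\exp(-c'(M,R)d\epsilon^2) < \theta$ for all $d \ge N$ with $N=N(R,\epsilon,\theta,M)$. The main obstacle is ensuring that every estimate — both the Voiculescu convergence and the Lipschitz/concentration bound — is uniform in the choice of deterministic matrices $\C_1,\ldots,\C_p$; this uniformity is available because all dependence on the matrices enters through the fixed operator norm bound $R$ and the fixed cardinality bound $M$, so the constants $c(M,R)$ and $c'(M,R)$ can indeed be chosen independently of the specific $\C_i$.
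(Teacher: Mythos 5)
The paper itself does not prove this lemma --- it is stated as a direct citation of Voiculescu (\cite{V1998-1}*{Corollary 2.13}) --- so a literal comparison is not possible, but your argument reproduces the same concentration-of-measure strategy that underlies Voiculescu's original proof: reduce $(M,\epsilon)$-freeness to a uniform, $d$-independent family of moment conditions; control the mean via expected asymptotic freeness of Haar unitaries; and control deviations via Lipschitz bounds plus Gromov--Milman concentration. The bookkeeping is sound: the Lipschitz constant $c(M,R)/\sqrt d$ and the uniformity of every bound in the choice of $\C_i$ (through only $R$ and $M$) are exactly the points that need care, and you address both. Two minor caveats that do not affect correctness: the concentration exponent should actually be $\exp(-c''(M,R)\,d^2\epsilon^2)$ rather than $\exp(-c'd\epsilon^2)$ once you combine the $1/\sqrt{d}$ Lipschitz constant with the $O(d)$ log-Sobolev rate (you are underselling the estimate, which is harmless); and one should quotient out the center $U(1)^p$ (or restrict to $SU(d)^p$) before invoking Gromov's inequality, since $\U(d)$ has a flat circle factor, but the trace functionals are invariant under this center so the reduction is immediate. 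The target $\psi$-value varies with $d$ through the moments of the $\C_i$, but as you implicitly use, it depends only on the single-algebra moments, which are conjugation-invariant, so comparison to $\mathbb{E}[W]$ is the right quantity.
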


The following is based on \cite{HP2006}*{Lemma 6.4.3}.  Note the following also shows why the reverse order is desirable on the right matrices in Definition \ref{defn:micro-bi-free}.
In essence, if we have two pairs of faces that are bi-free, ``most'' ways of choosing microstates for each individually produce good microstates for the family.

\begin{lem}
\label{lem:large-portion-good}
Let $(\{X_i\}^n_{i=1}, \{Y_j\}^m_{j=1})$ be a tracially bi-partite system.  Suppose that for some $0 \leq p \leq n$ and $0 \leq q \leq m$ that
\[
(\alg(X_1, \ldots, X_p), \alg(Y_1, \ldots, Y_q)) \qqand (\alg(X_{p+1}, \ldots, X_{n}), \alg(Y_{q+1}, \ldots, Y_m))
\]
are bi-free and that 
\[
	\paren{\{X_1, \ldots, X_p\}, \{ Y_1, \ldots, Y_q \}} \qqand \paren{\{X_{p+1}, \ldots, X_{n}\}, \{Y_{q+1}, \ldots, Y_m\}}
\]
have finite-dimensional approximants.  Then for every $M \in \bN$, $\epsilon > 0$, and 
\[
R > \max\left\{\max_{1 \leq i \leq n} \left\|X_i\right\|, \max_{1 \leq j \leq m} \left\|Y_j\right\|\right\}
\]
there exists an $\epsilon_1 > 0$ such that
\[
\lim_{d \to \infty} \frac{\lambda_{d, n+m}\left(\Psi_d(M, \epsilon_1) \cap \Theta_d(M, \epsilon) \right)    }{\lambda_{d, n+m}\left(\Psi_d(M, \epsilon_1)\right) } = 1
\]
where $\frac{0}{0} = 1$,
\begin{align*}
\Psi_d(M, \epsilon_1)&= \Gamma_R( X_1, \ldots, X_p \sqcup Y_1, \ldots, Y_q; M, d, \epsilon_1) \times_{\ell r} \Gamma_R(X_{p+1}, \ldots, X_{n} \sqcup Y_{q+1}, \ldots, Y_m; M, d, \epsilon_1), \\
\Theta_d(M, \epsilon) &=\Gamma_R( \smbfe ; M, d, \epsilon),
\end{align*}
and $\times_{\ell r}$ is as defined in the proof of Proposition \ref{prop:micro-subadditive}.
\end{lem}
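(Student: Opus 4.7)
The plan is to follow the template of the proof of \cite{HP2006}*{Lemma 6.4.3} on additivity of free entropy. I would begin by noting that $\Psi_d(M,\epsilon_1)$ carries a natural $\U(d)\times\U(d)$-action in which $U$ simultaneously conjugates $A_1,\ldots,A_p,B_1,\ldots,B_q$ while $V$ simultaneously conjugates $A_{p+1},\ldots,A_n,B_{q+1},\ldots,B_m$. Conjugation by unitaries is an isometry for the Hilbert-Schmidt norm so $\lambda_{d,n+m}$ is preserved, and traciality of $\tau_d$ ensures every bi-partite moment $\tau_d(A_{i_1}\cdots A_{i_r}B_{j_s}\cdots B_{j_1})$ within a single pair is unchanged under simultaneous conjugation of all left and right variables of that pair; hence $\Psi_d(M,\epsilon_1)$ is preserved as well. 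Fubini's theorem would then reduce matters to showing that for each fixed $x\in\Psi_d(M,\epsilon_1)$, the $\gamma_d\times\gamma_d$-probability that $(U,V)\cdot x$ lies in $\Theta_d(M,\epsilon)$ tends to $1$ as $d\to\infty$, uniformly in $x$, for an appropriately small choice of $\epsilon_1$.

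To accomplish this, I would apply Lemma \ref{lem:voi-lem-about-lots-of-matrices-making-asymptotic-freeness} to the two $R$-bounded sets $\C_1=\{A_1,\ldots,A_p,B_1,\ldots,B_q\}$ and $\C_2=\{A_{p+1},\ldots,A_n,B_{q+1},\ldots,B_m\}$ in $\M_d$: given $\delta>0$, for $d$ sufficiently large the conjugates $U\C_1U^*$ and $V\C_2V^*$ will be $(M,\delta)$-free in $(\M_d,\tau_d)$ with probability arbitrarily close to $1$. Any required joint bi-partite moment of the conjugated tuple, $\tau_d(\tilde A_{i_1}\cdots\tilde A_{i_r}\tilde B_{j_s}\cdots\tilde B_{j_1})$, is simply a tracial moment of a single word in $U\C_1U^*\cup V\C_2V^*$; grouping maximal consecutive runs of letters from the same pair, this word factors as an alternating product of single-pair blocks each again of the form $\tilde A_\bullet\cdots\tilde A_\bullet\tilde B_\bullet\cdots\tilde B_\bullet$. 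By $(M,\delta)$-freeness the mixed moment is then approximated (to within an error controllable by $\delta$) by the corresponding free-product moment built from these single-pair block moments, and each single-pair block moment is in turn within $\epsilon_1$ of the corresponding $\varphi$-moment of that pair by definition of $\Psi_d(M,\epsilon_1)$.

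The main obstacle will be the final step of identifying the resulting free-product expression involving the individual-pair $\varphi$-moments with the actual bi-free joint moment $\varphi(X_{i_1}\cdots X_{i_r}Y_{j_1}\cdots Y_{j_s})$; this is exactly where bi-freeness of the two pairs in $(\A,\varphi)$ must be used. In the tracially bi-partite setting, the universal bi-free formula for joint moments of two bi-free pairs of faces coincides, once the right variables are read in reverse order, with the ordinary free universal polynomial describing free independence of the combined algebras in a tracial context, in the spirit of Remark \ref{rem:only-a-certain-order-of-microstates-matters} and Theorem \ref{thm:micro-converting-rights-to-lefts}. The cleanest route to making this identification rigorous is to first pass to a tracial representation of the joint $\varphi$-moments as in Theorem \ref{thm:micro-converting-rights-to-lefts}---available here because the reduced free product of tracial von Neumann algebras is tracial---in which bi-freeness of the two pairs translates into classical free independence of the two combined subalgebras, and then to invoke the classical free moment formula. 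Finally, choosing $\delta$ small enough relative to $M$, $R$, $n+m$, and $\epsilon$, and $\epsilon_1$ small enough to absorb the accumulated errors, will then yield the required convergence.
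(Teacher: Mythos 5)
Your proposal is correct and follows the same strategy as the paper's proof: apply Lemma~\ref{lem:voi-lem-about-lots-of-matrices-making-asymptotic-freeness} to obtain asymptotic $(M,\delta)$-freeness of independently conjugated families, show that approximate freeness together with good single-pair microstates forces the joint bi-partite moments of $\tau_d$ (with right variables reversed) to lie within $\epsilon$ of the bi-free joint moments of $\varphi$, and conclude by Fubini using unitary invariance of $\Psi_d$ (the paper conjugates only the second family by one unitary, but your $\U(d)\times\U(d)$ action is equivalent by Haar invariance). For the key identification, the paper constructs reference operators directly from matching free cumulants with bi-free cumulants rather than via your tracial reduced-free-product detour, but these are equivalent devices for the same combinatorial identity; note only that the tracial realization of each \emph{individual} pair comes from the finite-dimensional approximants hypothesis (e.g.\ via an ultraproduct of matrix microstates), not from the reduced free product alone.
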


\begin{proof}
Fix $M \in \bN$, $\epsilon > 0$, and $R$ as described.
We claim that there exists an $\epsilon_1 > 0$ such that if
\[
(A_1, \ldots, A_n, B_1, \ldots, B_m) \in \Psi_d(M, \epsilon_1)
\]
and if
\[
\{A_1,\ldots, A_p, B_1, \ldots, B_q\} \qand \{A_{p+1}, \ldots, A_n, B_{q+1},\ldots, B_m\} \qquad \text{are }(M, \epsilon_1)\text{-free},
\]
then
\[
(A_1, \ldots, A_n, B_1, \ldots, B_m) \in \Theta_d(M, \epsilon).
\]

To see this, first take operators $X_1', \ldots, X_n', Y_1', \ldots, Y_m'$, of norm bounded by some $R_1 \geq R$, in another non-commutative probability space $(\A', \varphi')$ such that for all $0 \leq p, q$, all $i_1, \ldots, i_p \in \set{1, \ldots, n}$, and all $j_1, \ldots, j_q \in \set{1, \ldots, m}$, we have
\[
	\kappa_{k+l}(X'_{i_1}, \ldots, X'_{i_p}, Y'_{j_q}, \ldots, Y'_{j_1}) = \kappa_\chi(X_{i_1}, \ldots, X_{i_p}, Y_{j_1}, \ldots, Y_{j_q}).
\]
and such that\[
	\alg(X_1', \ldots, X_p', Y'_1, \ldots, Y'_q) \qqand \alg(X_{p+1}', \ldots, X_n', Y'_{q+1}, \ldots, Y'_m)
\]
are free.  Note that these two conditions will be consistent as the first condition will automatically imply some mixed free cumulants will vanish precisely because the mixed bi-free cumulants vanish.
Moreover, as a consequence of the cumulant construction, we have
\[
	\varphi'(X'_{i_1} \cdots X'_{i_p} Y'_{j_q} \cdots Y'_{j_1})  = \varphi(X_{i_1} \cdots X_{i_p} Y_{j_1} \cdots Y_{j_q}).
\]

Hence, for suitably small $\epsilon_1 > 0$, if 
\[
	(A_1, \ldots, A_n, B_1, \ldots, B_m) \in \Psi_d(M, \epsilon_1)
\]
and if
\[
	\set{A_1,\ldots, A_p, B_1, \ldots, B_q} \qand \set{A_{p+1}, \ldots, A_n, B_{q+1},\ldots, B_m} \qquad \text{are }(M, \epsilon_1)\text{-free},
\]
then for all $0 \leq p,q$ with $p + q \leq M$, $i_1,\ldots i_p \in \{1,\ldots, n\}$, and $j_1, \ldots, j_q \in \{1,\ldots, m\}$ we have that
\[
\tau_d(A_{i_1} \cdots A_{i_p} B_{j_q} \cdots B_{j_1})
\]
is within a multiple of $\epsilon_1$ (involving $M$ and $R_1$) of
\[
	\varphi'(X'_{i_1} \cdots X'_{i_p} Y'_{j_q} \cdots Y'_{j_1}) = \varphi(X_{i_1} \cdots X_{i_p} Y_{j_1} \cdots Y_{j_q}),
\]
thereby completing the claim.

Given $\theta > 0$, by Lemma \ref{lem:voi-lem-about-lots-of-matrices-making-asymptotic-freeness} there exists an $N \in \bN$ such that
\[
\gamma_d\left(\left\{U \in \U(d) \, \left| \, \substack{ \{A_1, \ldots, A_p, B_1,\ldots, B_q\} \text{ and }\{U^*A_{p+1}U, \ldots, U^*A_nU, U^*B_{q+1}U,\ldots, U^* B_mU\}\\ \text{ are }(M, \epsilon_1)\text{-free}} \right. \right\}\right) \geq 1 - \theta
\]
for all $d \geq N$ and all $A_i, B_j \in \M_d^{\sa}$ with $\left\|A_i\right\|, \left\|B_j\right\| \leq R$ for $1\leq i \leq n $ and $1 \leq j \leq m$.  

By the assumption of finite-dimensional approximants, $\Psi_d(M, \epsilon_1)$ is non-empty for sufficiently large $d$.  Let $\nu_d$ denote the normalized restriction of $\lambda_{d, n+m}$ to $\Psi_d(M, \epsilon_1)$.  Since both $\Psi_d(M, \epsilon_1)$ and $\nu_d$ are invariant under the action of $\U(d)$ given by
\[
(A_1,\ldots, A_n, B_1,\ldots, B_n) \mapsto (A_1, \ldots, A_p, U^*A_{p+1}U, \ldots, U^*A_nU, B_1, \ldots, B_{q}, U^*B_{q+1} U, \ldots, U^*B_mU),
\]
we obtain that
\begin{align*}
&\frac{\lambda_{d, n+m}\left(\Psi_d(M, \epsilon_1) \cap \Theta_d(M, \epsilon) \right)    }{\lambda_{d, n+m}\left( \Psi_d(M, \epsilon_1)\right) } \\ 
&= \int_{\Psi_d(M, \epsilon_1)} \left( \int_{\U(d)} 1_{\Theta_d(M, \epsilon)}(A_1, \ldots, A_p, U^*A_{p+1}U, \ldots, U^*A_nU, B_1, \ldots, B_{q}, U^*B_{q+1} U, \ldots, U^*B_mU) \, d\gamma(U) \right) d \nu_d.
\end{align*}
By the choice of $\epsilon_1$, we obtain for sufficiently large $d$ that
\[
\int_{\U(d)} 1_{\Theta_d(M, \epsilon)}(A_1, \ldots, A_p, U^*A_{p+1}U, \ldots, U^*A_nU, B_1, \ldots, B_{q}, U^*B_{q+1} U, \ldots, U^*B_mU) \, d\gamma(U) > 1-\theta.
\]
Hence 
\[
\frac{\lambda_{d, n+m}\left(\Psi_d(M, \epsilon_1) \cap \Theta_d(M, \epsilon) \right)    }{\lambda_{d, n+m}\left(\Psi_d(M, \epsilon_1)\right) } \geq 1-\theta
\]
which completes the proof as $\theta$ was arbitrary.
\end{proof}

Unfortunately, at this point in trying to prove additivity of microstate bi-free entropy for bi-free collections, we reach a bit of an impasse.  Either we need to know that the $\limsup_{d \to \infty}$ in Definition \ref{defn:micro-bi-free} is actually a limit, or we need to replaces the $\limsup_{d \to \infty}$ with a limit along an ultrafilter.  Thus, for the following result, we use $\chi^\omega(X_1, \ldots, X_n \sqcup Y_1, \ldots, Y_m)$ to denote the same quantity as in Definition \ref{defn:micro-bi-free} where $\limsup_{d \to \infty}$ is replaced with $\limsup_{\omega \to \infty}$ for some non-principle ultrafilter $\omega$.

\begin{thm}
\label{thm:micro-bi-free-additive}
Let $(\{X_i\}^n_{i=1}, \{Y_j\}^m_{j=1})$ be a tracially bi-partite system.  Suppose that for some $0 \leq p \leq n$ and $0 \leq q \leq m$ that
\[
(\alg(X_1, \ldots, X_p), \alg(Y_1, \ldots, Y_q)) \qqand (\alg(X_{p+1}, \ldots, X_{n}), \alg(Y_{q+1}, \ldots, Y_m))
\]
are bi-free. If the $\limsup_{d \to \infty}$ in Definition \ref{defn:micro-bi-free} is actually a limit for $(\{X_i\}^p_{i=1}, \{Y_j\}^q_{j=1})$ and for $(\{X_i\}^n_{i=p+1}, \{Y_j\}^m_{j=q+1})$, then
\[
\chi( \smbfe) = \chi(X_1, \ldots, X_p \sqcup Y_1, \ldots, Y_q ) + \chi(X_{p+1}, \ldots, X_{n}\sqcup Y_{q+1}, \ldots, Y_m).
\]
Alternatively
\[
\chi^\omega( \smbfe) = \chi^\omega(X_1, \ldots, X_p \sqcup Y_1, \ldots, Y_q ) + \chi^\omega(X_{p+1}, \ldots, X_{n}\sqcup Y_{q+1}, \ldots, Y_m).
\]
\end{thm}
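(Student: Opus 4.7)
The subadditive inequality
\[
	\chi(\smbfe) \leq \chi(X_1, \ldots, X_p \sqcup Y_1, \ldots, Y_q) + \chi(X_{p+1}, \ldots, X_n \sqcup Y_{q+1}, \ldots, Y_m)
\]
is already given by Proposition \ref{prop:micro-subadditive}, and its $\chi^\omega$-analogue follows from the same set-theoretic containment argument (using that $\lim_\omega$ preserves inequalities and is additive on $[-\infty,\infty]$-valued sequences).  So only the reverse inequality needs proof.  We may further assume both subsystems have finite microstate bi-free entropy, since otherwise subadditivity forces both sides of the desired equality to be $-\infty$.  In particular, by Remark \ref{rem:fda}, both subsystems then have finite-dimensional approximants, which is precisely what is needed to invoke Lemma \ref{lem:large-portion-good}.

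Fix $M \in \bN$, $\epsilon > 0$, $\theta > 0$, and $R$ larger than the norms of all variables.  By Lemma \ref{lem:large-portion-good} we may choose $\epsilon_1 > 0$ so that, for all sufficiently large $d$,
\[
	\lambda_{d,n+m}\paren{\Psi_d(M, \epsilon_1) \cap \Theta_d(M, \epsilon)} \geq (1 - \theta) \lambda_{d,n+m}\paren{\Psi_d(M, \epsilon_1)}.
\]
Since $\Psi_d(M, \epsilon_1)$ is, up to a permutation of coordinates, a Cartesian product of the two subsystem microstate spaces, its Lebesgue measure factors.  Taking logarithms, dividing by $d^2$, and splitting $\frac{n+m}{2}\log d = \frac{p+q}{2}\log d + \frac{(n-p)+(m-q)}{2}\log d$ yields, for all sufficiently large $d$,
\[
	\tfrac{1}{d^2}\chi_R(\smbfe; M, d, \epsilon) + \tfrac{n+m}{2}\log d \geq \tfrac{\log(1-\theta)}{d^2} + A_d + B_d,
\]
where $A_d, B_d$ denote the analogous normalized log-volume quantities for the left and right subsystems at parameters $(M, \epsilon_1)$, and where $\limsup_d A_d$ and $\limsup_d B_d$ coincide with $\chi_R$ of the respective subsystems at $(M, \epsilon_1)$.

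The main obstacle is passing to the limit in $d$: in general $\limsup_d(A_d + B_d) \leq \limsup_d A_d + \limsup_d B_d$, with strict inequality possible.  Under the first hypothesis, $A_d$ and $B_d$ each converge, so their sum converges as well and we recover
\[
	\chi_R(\smbfe; M, \epsilon) \geq \chi_R(X_1,\ldots,X_p \sqcup Y_1,\ldots,Y_q; M, \epsilon_1) + \chi_R(X_{p+1},\ldots,X_n \sqcup Y_{q+1},\ldots,Y_m; M, \epsilon_1).
\]
For the $\chi^\omega$ statement, $\lim_\omega$ is unconditionally additive on $[-\infty,\infty]$-valued sequences, yielding the analogous inequality with $\chi^\omega_R$ in place of $\chi_R$ without any convergence hypothesis.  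In either case, since $\chi_R(\cdot\,; M, \epsilon_1) \geq \chi_R(\cdot)$, taking the infimum over $(M, \epsilon)$ on the left gives $\chi_R(\smbfe) \geq \chi_R(X_1,\ldots,X_p \sqcup Y_1,\ldots,Y_q) + \chi_R(X_{p+1},\ldots,X_n \sqcup Y_{q+1},\ldots,Y_m)$.  Proposition \ref{prop:R-does-not-matter} upgrades this to the desired inequality for $\chi$ upon choosing $R$ sufficiently large, and combining with subadditivity finishes the proof.
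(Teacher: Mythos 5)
Your proof is correct and takes essentially the same route as the paper: subadditivity from Proposition \ref{prop:micro-subadditive}, reduction to the finite-entropy case so that Remark \ref{rem:fda} yields finite-dimensional approximants, invoking Lemma \ref{lem:large-portion-good} to control the proportion of $\Psi_d(M,\epsilon_1)$ lying in $\Theta_d(M,\epsilon)$, and then using the product structure of $\Psi_d$ together with the limit/ultrafilter hypothesis to split the limsup. The only cosmetic difference is that you introduce an explicit $\theta$ and a $\log(1-\theta)/d^2$ correction term where the paper directly observes that the ratio tending to $1$ makes $\frac{1}{d^2}\log\lambda(\Theta_d\cap\Psi_d)$ and $\frac{1}{d^2}\log\lambda(\Psi_d)$ have the same limsup; these are equivalent ways of making the same estimate.
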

\begin{proof}
By Proposition \ref{prop:micro-subadditive}
\[
\chi( \smbfe) \leq \chi(X_1, \ldots, X_p \sqcup Y_1, \ldots, Y_q ) + \chi(X_{p+1}, \ldots, X_{n}\sqcup Y_{q+1}, \ldots, Y_m)
\]
so the result is immediate if either quantity on the right hand side is $-\infty$.
Thus we may assume these microstate bi-free entropies are finite (and thus have finite-dimensional approximants) and proceed with demonstrating the other inequality.

For any $M \in \bN$, $\epsilon > 0$, and 
\[
R > \max\left\{\max_{1 \leq i \leq n} \left\|X_i\right\|, \max_{1 \leq j \leq m} \left\|Y_j\right\|\right\},
\]
Lemma \ref{lem:large-portion-good} implies there exists an $\epsilon_1 > 0$ such that 
\[
\lim_{d \to \infty} \frac{\lambda_{d, n+m}\left(\Psi_d(M, \epsilon_1) \cap \Theta_d(M, \epsilon) \right)    }{\lambda_{d, n+m}\left( \Psi_d(M, \epsilon_1)\right) } = 1,
\]
where
\begin{align*}
\Psi_d(M, \epsilon_1)&= \Gamma_R( X_1, \ldots, X_p \sqcup Y_1, \ldots, Y_q; M, d, \epsilon_1) \times_{\ell r} \Gamma_R(X_{p+1}, \ldots, X_{n} \sqcup Y_{q+1}, \ldots, Y_m; M, d, \epsilon_1)  \\
\Theta_d(M, \epsilon) &=\Gamma_R( \smbfe ; M, d, \epsilon).
\end{align*}
Hence, assuming $\limsup_{d \to \infty}$ in Definition \ref{defn:micro-bi-free} can be replaced with $\lim_{d \to \infty}$, we obtain that
\begin{align*}
&\chi_R( \smbfe ; M, \epsilon) \\
&= \limsup_{d \to \infty} \frac{1}{d^2}\log(\lambda_{d, n+m}(\Theta_d(M, \epsilon))) + \frac{1}{2}(n+m) \log(d)\\
& \geq \limsup_{d \to \infty} \frac{1}{d^2}\log(\lambda_{d, n+m}(\Theta_d(M, \epsilon)\cap \Psi_d(M, \epsilon_1))) + \frac{1}{2}(n+m) \log(d) \\
&= \limsup_{d \to \infty} \frac{1}{d^2}\log(\lambda_{d, n+m}(\Psi_d(M, \epsilon_1))) + \frac{1}{2}(n+m) \log(d) \\
&= \lim_{d \to \infty} \frac{1}{d^2}\log(\lambda_{d, n+m}(\Gamma_R( X_1, \ldots, X_p \sqcup Y_1, \ldots, Y_q; M, d, \epsilon_1))) + \frac{1}{2}(p+q) \log(d) \\
& \qquad + \frac{1}{d^2}\log(\lambda_{d, n+m}(\Gamma_R(X_{p+1}, \ldots, X_{n} \sqcup Y_{q+1}, \ldots, Y_m; M, d, \epsilon_1))) + \frac{1}{2}(n+m-p-q) \log(d)\\
&= \chi_R(X_1, \ldots, X_p \sqcup Y_1, \ldots, Y_q; M, \epsilon_1) + \chi_R(X_{p+1}, \ldots, X_{n} \sqcup Y_{q+1}, \ldots, Y_m; M, \epsilon_1) \\
&\geq \chi_R(X_1, \ldots, X_p \sqcup Y_1, \ldots, Y_q) + \chi_R(X_{p+1}, \ldots, X_{n} \sqcup Y_{q+1}, \ldots, Y_m),
\end{align*}
where in the last inequality we have used the fact that $\chi_R(\cdot\sqcup\cdot; M, \epsilon)$ decreases as $M$ increases and as $\epsilon$ decreases.

The result for $\chi^\omega$ easily follows by similar arguments.
\end{proof}

\begin{cor}
\label{cor:additive bi-free entropy}
Let $(\{X_i\}^n_{i=1}, \{Y_j\}^m_{j=1})$ be a tracially bi-partite system. If
\[
\alg(\{X_1, \ldots, X_n\}) \qqand \alg( \{Y_1, \ldots, Y_m\})
\]
are classically independent  and if the $\limsup_{d \to \infty}$ in Definition \ref{defn:micro-bi-free} is actually a limit for $\{X_i\}^n_{i=1}$ and for $ \{Y_j\}^m_{j=1})$, then
\[
\chi(\smbfe ) = \chi(X_1, \ldots, X_n) + \chi(Y_1, \ldots, Y_m).
\]
Alternatively
\[
\chi^\omega(\smbfe ) = \chi^\omega(X_1, \ldots, X_n) + \chi^\omega(Y_1, \ldots, Y_m).
\]
\end{cor}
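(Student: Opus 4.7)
The plan is to reduce this corollary directly to Theorem~\ref{thm:micro-bi-free-additive} by taking $p = n$ and $q = 0$. With this choice of indices the two two-faced pairs in the hypothesis of the theorem become $(\alg(X_1,\ldots,X_n), \bC I)$ and $(\bC I, \alg(Y_1,\ldots,Y_m))$, that is, pairs in which one face on each side is trivial. Once bi-freeness of these two pairs is established, the theorem gives
\[
\chi(\smbfe) = \chi(X_1,\ldots, X_n \sqcup \emptyset) + \chi(\emptyset \sqcup Y_1,\ldots,Y_m),
\]
and Remark~\ref{rem:monobifree-is-free} identifies these two summands with the classical microstate free entropies $\chi(X_1,\ldots,X_n)$ and $\chi(Y_1,\ldots,Y_m)$. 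The $\chi^\omega$ statement is obtained identically from the ultrafilter version of Theorem~\ref{thm:micro-bi-free-additive}.

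The substance of the proof is therefore to verify that classical independence of $\alg(X_1,\ldots,X_n)$ and $\alg(Y_1,\ldots,Y_m)$ inside a tracially bi-partite system implies bi-freeness of the two pairs above. I would argue this by showing that the joint distribution of $(\{X_i\}, \emptyset) \sqcup (\emptyset, \{Y_j\})$ agrees with the distribution predicted by the bi-free product construction. Concretely, given an alternating word in the variables coming from the two pairs, one writes it as a product of $X_i$'s and $Y_j$'s; the bi-partite commutation assumption allows one to pull all of the left letters to one side of the right letters, and classical independence then factors the resulting moment into a product of a pure-$X$ moment and a pure-$Y$ moment. This factoring matches exactly what one obtains from a model in which $(\alg(X_1,\ldots,X_n),\bC I)$ and $(\bC I,\alg(Y_1,\ldots,Y_m))$ are built as a bi-free product, which suffices to conclude bi-freeness.

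The main conceptual obstacle is not analytic but bookkeeping: one must be careful to respect the left/right ordering conventions used throughout the paper (reversed right words, tracial but possibly non-commutative left and right restrictions of $\varphi$) when translating ``classical independence plus bi-partite commutation'' into the bi-free cumulant-vanishing condition. Once that translation is made the argument is essentially immediate, and the corollary follows by plugging into Theorem~\ref{thm:micro-bi-free-additive} and applying Remark~\ref{rem:monobifree-is-free} as above.
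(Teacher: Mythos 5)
Your proposal matches the paper's proof: both reduce to Theorem~\ref{thm:micro-bi-free-additive} with $p = n$, $q = 0$, invoke the fact that classical independence of $\alg(\{X_i\})$ and $\alg(\{Y_j\})$ gives bi-freeness of $(\alg(\{X_i\}), \bC)$ and $(\bC, \alg(\{Y_j\}))$, and then use Remark~\ref{rem:monobifree-is-free} to identify the two summands with ordinary free entropies. The only difference is that you sketch a direct moment-factoring verification of the bi-freeness fact, whereas the paper simply cites it from Voiculescu's original bi-free paper \cite{V2014}.
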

\begin{proof}
	We recall from \cite{V2014} that classical independence implies the bi-freeness of $(\alg(\set{X_1, \ldots, X_n}), \bC)$ from $(\bC, \alg(\set{Y_1, \ldots, Y_m}))$.
	Theorem~\ref{thm:micro-bi-free-additive} then allows us to equate the bi-free entropy of the whole system with the sum of the bi-free entropies of the left variables and of the right variables, which by Remark~\ref{rem:monobifree-is-free} is just their free entropies.
\end{proof}

\section{Orbital Bi-Free Entropy}
\label{sec:Orbital}

In this section, we will develop a bi-free analogue of the notion of orbital free entropy, which was introduced in \cite{HMU2009} and is deeply connected to microstate free entropy.
Using the joint orbital bi-free entropy, a more descriptive bound can be given related to the subadditivity of the microstate bi-free entropy.
The approach used here is based on that of \cite{U2014} which is a close thematic fit to this paper; although it may be interesting to consider an approach similar to that of \cite{BD2013}, we do not do so here.
In fact, most proofs in this section are adaptations of those from \cite{U2014}.

Throughout this section, let $(\A, \varphi)$ be a C$^*$-non-commutative probability space and let $\ell \in \bN$.
For each $1 \leq k \leq \ell$, let ${\bf X}_k = (X_{k,1}, X_{k,2}, \ldots, X_{k,n_k})$ and ${\bf Y}_k = (Y_{k,1}, Y_{k,2}, \ldots, Y_{k,m_k})$ denote, respectively, $n_k$- and $m_k$-tuples of self-adjoint operators from $\A$, where $n_k, m_k \geq 0$ with $n_k + m_k \geq 1$.
Furthermore, we will use ${\bf Z}_k$ to denote the system of variables ${\bf X}_k \sqcup {\bf Y}_k$ where ${\bf X}_k$ are viewed as the left variables and ${\bf Y}_k$ are viewed as the right variables.

Let $U(d)$ denote the unitary matrices from $\M_d$ and let $\gamma_d$ denote the Haar measure on $U(d)$.  Furthermore define
\[
\Phi_d : U(d)^\ell \times \left( \prod^\ell_{k=1} (M_d^{\sa})^{n_k}\right) \times \left( \prod^\ell_{k=1} (M_d^{\sa})^{m_k}\right) \to \left( \prod^\ell_{k=1} (M_d^{\sa})^{n_k}\right) \times \left( \prod^\ell_{k=1} (M_d^{\sa})^{m_k}\right)
\]
by
\[
\Phi_d((U_k)^\ell_{k=1}, ({\bf A}_k)^\ell_{k=1}, ({\bf B}_k)^\ell_{k=1}) = ((U_k^*{\bf A}_k U_k)^\ell_{k=1}, (U_k^*{\bf B}_k U_k)^\ell_{k=1})
\]
where  for ${\bf A}_k = (A_{k,1}, A_{k,2}, \ldots, A_{k,n_k}) \in (M_d^{\sa})^{n_k}$ and ${\bf B}_k = (B_{k,1}, B_{k,2}, \ldots, B_{k,m_k}) \in (M_d^{\sa})^{m_k}$, 
\begin{align*}
U_k^*{\bf A}_k U_k &= (U^*_kA_{k,1}U_k, U^*_kA_{k,2}U_k, \ldots, U^*_kA_{k,n_k}U_k) \text{ and}\\
U_k^*{\bf B}_k U_k &= (U^*_kB_{k,1}U_k, U^*_kB_{k,2}U_k, \ldots, U^*_kB_{k,m_k}U_k).
\end{align*}
Moreover, let $\P\left(\left( \prod^\ell_{k=1} (M_d^{\sa})^{n_k}\right) \times \left( \prod^\ell_{k=1} (M_d^{\sa})^{m_k}\right)\right)$ denote the set of all regular Borel probability measures on $\left( \prod^\ell_{k=1} (M_d^{\sa})^{n_k}\right) \times \left( \prod^\ell_{k=1} (M_d^{\sa})^{m_k}\right)$.

Using
\[
\Gamma_R({\bf X}_1, \ldots, {\bf X}_\ell \sqcup {\bf Y}_1, \ldots, {\bf Y}_\ell; M, d, \epsilon)
\]
to denote the bi-free microstates based on the self-adjoint variables contained in the left variables ${\bf X}_1, \ldots, {\bf X}_\ell$ in the order listed and the right variables ${\bf Y}_1, \ldots, {\bf Y}_\ell$ in the order listed, we may now define the object of study in this section.

\begin{defn}
\label{defn:orbital-bi-free}
With the above notation, for each $\mu \in \P\left( \left( \prod^\ell_{k=1} (M_d^{\sa})^{n_k}\right) \times \left( \prod^\ell_{k=1} (M_d^{\sa})^{m_k}\right)\right)$, $M, d \in \bN$, and $R, \epsilon > 0$, let
\[
\chi_{\orb, R}({\bf Z}_1, \ldots, {\bf Z}_\ell; M, d, \epsilon; \mu) = \log\left( ( \gamma_d^{\otimes \ell} \otimes \mu)\left(\Phi_d^{-1}(\Gamma_R({\bf X}_1, \ldots, {\bf X}_\ell \sqcup {\bf Y}_1, \ldots, {\bf Y}_\ell; M, d, \epsilon)   )\right)   \right)
\]
(with $\log(0) = -\infty$).  With this we define
\begin{align*}
\chi_{\orb, R}({\bf Z}_1, \ldots, {\bf Z}_\ell; M, d, \epsilon)&= \sup_{\mu \in \P\left(\left( \prod^\ell_{k=1} (M_d^{\sa})^{n_k}\right) \times \left( \prod^\ell_{k=1} (M_d^{\sa})^{m_k}\right)\right)} \chi_{\orb, R}({\bf Z}_1, \ldots, {\bf Z}_\ell; M, d, \epsilon; \mu), \\
\chi_{\orb, R}({\bf Z}_1, \ldots, {\bf Z}_\ell; M, \epsilon)&= \limsup_{d \to \infty} \frac1{d^2} \chi_{\orb, R}({\bf Z}_1, \ldots, {\bf Z}_\ell; M, d, \epsilon),\\ 
\chi_{\orb, R}({\bf Z}_1, \ldots, {\bf Z}_\ell) &= \inf\{\chi_{\orb, R}({\bf Z}_1, \ldots, {\bf Z}_\ell; M, \epsilon) \, \mid \, M \in \bN, \epsilon > 0), \text{ and} \\
\chi_{\orb}({\bf Z}_1, \ldots, {\bf Z}_\ell) &= \sup_{0 < R < \infty} \chi_{\orb, R}({\bf Z}_1, \ldots, {\bf Z}_\ell).
\end{align*}
The quantity $\chi_{\orb}({\bf Z}_1, \ldots, {\bf Z}_\ell) \in [-\infty, 0]$ will be called the orbital bi-free entropy of the collections ${\bf Z}_1, \ldots, {\bf Z}_\ell$.  Note that the fact that $\chi_{\orb}({\bf Z}_1, \ldots, {\bf Z}_\ell)\leq 0$ is clear by definition.
\end{defn}

\begin{rem}
Based on the definition of $\chi_{\orb}({\bf Z}_1, \ldots, {\bf Z}_\ell)$, we can see that the orbital bi-free entropy is a measure of how well conjugation by unitaries preserves the bi-free microstates.
We can see that the infimum over $\epsilon$ and $M$ occurs as $\epsilon$ tends to $0$ and $M$ tends to infinity.
Furthermore, it is not difficult to see that if $m_k = 0$ for all $k$, then $\chi_{\orb}({\bf Z}_1, \ldots, {\bf Z}_\ell)$ agrees with $\chi_{\orb}({\bf X}_1, \ldots, {\bf X}_\ell)$ as in \cite{U2014}*{Definition 2.1}.

If the variables in question are not tracially bipartite, we have $\Gamma_R({\bf X}_1, \ldots, {\bf X}_\ell \sqcup {\bf Y}_1, \ldots, {\bf Y}_\ell; M, d, \epsilon) = \emptyset$ for appropriately antagonistic parameters, and so $\chi_{\orb}({\bf Z}_1, \ldots, {\bf Z}_\ell) = -\infty$.
Thus we continue to make this standing assumption throughout this section. 
\end{rem}

Of course the supremum over the probability measures portion of Definition \ref{defn:orbital-bi-free} may seem difficult to work with for computations.
From the theoretical standpoint it is quite natural, as we will see.
However, as with \cite{U2014}, there are other ways to describe $\chi_{\orb}({\bf Z}_1, \ldots, {\bf Z}_\ell)$ without the need to take a supremum over probability measures.
To provide one such description, first we need to develop some additional notation and demonstrate a lemma that will be useful throughout the section.

Given $(({\bf A}_k)^\ell_{k=1}, ({\bf B}_k)^\ell_{k=1}) \in \left( \prod^\ell_{k=1} (M_d^{\sa})^{n_k}\right) \times \left( \prod^\ell_{k=1} (M_d^{\sa})^{m_k}\right)$, let 
\[
\Gamma_{\orb}({\bf Z}_1, \ldots, {\bf Z}_\ell: ({\bf A}_k)^\ell_{k=1}, ({\bf B}_k)^\ell_{k=1}; M, d, \epsilon)
\]
denote the set of all $(U_k)^\ell_{k=1} \in (U(d))^\ell$ such that
\[
\Phi_d((U_k)^\ell_{k=1}, ({\bf A}_k)^\ell_{k=1}, ({\bf B}_k)^\ell_{k=1}) \in \Gamma_\infty({\bf X}_1, \ldots, {\bf X}_\ell \sqcup {\bf Y}_1, \ldots, {\bf Y}_\ell; M, d, \epsilon).
\]
Note that for $\Gamma_{\orb}({\bf Z}_1, \ldots, {\bf Z}_\ell: ({\bf A}_k)^\ell_{k=1}, ({\bf B}_k)^\ell_{k=1}; M, d, \epsilon)$ to be non-empty, each $({\bf A}_k, {\bf B}_k)$ must be good microstates for ${\bf X}_k \sqcup {\bf Y}_k$; more precisely, if $(\M_d^{\sa})_R$ denotes all elements of $\M_d^{\sa}$ of operator norm at most $R$ and if for some $k$ we have that $({\bf A}_k, {\bf B}_k) \in ((\M_d^{\sa})_R)^{n_k+m_k} \setminus \Gamma_R({\bf X}_k \sqcup {\bf Y}_k; M, d, \epsilon)$, then
 \[
\Gamma_{\orb}({\bf Z}_1, \ldots, {\bf Z}_\ell: ({\bf A}_k)^\ell_{k=1}, ({\bf B}_k)^\ell_{k=1}; M, d, \epsilon) = \emptyset. 
\]

We are now prepared to prove the following technical lemma.

\begin{lem}
\label{lem:orbital-integral}
For every $R> 0$, the map from $ \left( \prod^\ell_{k=1} (M_d^{\sa})^{n_k}_R\right) \times \left( \prod^\ell_{k=1} (M_d^{\sa})_R^{m_k}\right)$ to $\bR$ defined by
\[
(({\bf A}_k)^\ell_{k=1}, ({\bf B}_k)^\ell_{k=1}) \mapsto \gamma^{\otimes \ell}_d\left( \Gamma_{\orb}({\bf Z}_1, \ldots, {\bf Z}_\ell: ({\bf A}_k)^\ell_{k=1}, ({\bf B}_k)^\ell_{k=1}; M, d, \epsilon)  \right)
\]
is Borel.  Furthermore, for every $\mu \in \P\left(\left( \prod^\ell_{k=1} (M_d^{\sa})^{n_k}\right) \times \left( \prod^\ell_{k=1} (M_d^{\sa})^{m_k}\right)   \right)$, 
\begin{align*}
( \gamma_d^{\otimes \ell} \otimes \mu)&\left(\Phi_d^{-1}(\Gamma_R({\bf X}_1, \ldots, {\bf X}_\ell \sqcup {\bf Y}_1, \ldots, {\bf Y}_\ell; M, d, \epsilon)   )\right)  \\
&= \int_{\left( \prod^\ell_{k=1} (M_d^{\sa})_R^{n_k}\right) \times \left( \prod^\ell_{k=1} (M_d^{\sa})^{m_k}_R\right)  } \gamma^{\otimes \ell}_d\left( \Gamma_{\orb}({\bf Z}_1, \ldots, {\bf Z}_\ell: ({\bf A}_k)^\ell_{k=1}, ({\bf B}_k)^\ell_{k=1}; M, d, \epsilon)  \right) \, d\mu \\
&= \int_{ \prod^\ell_{k=1}  \Gamma_R({\bf X}_k \sqcup {\bf Y}_k; M, d, \epsilon)     } \gamma^{\otimes \ell}_d\left( \Gamma_{\orb}({\bf Z}_1, \ldots, {\bf Z}_\ell: ({\bf A}_k)^\ell_{k=1}, ({\bf B}_k)^\ell_{k=1}; M, d, \epsilon)  \right) \, d\mu,
\end{align*}
with an implicit reordering of coordinates in the second integral.
\end{lem}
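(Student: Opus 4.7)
The plan is essentially to unwind the definitions and apply Fubini--Tonelli, paying careful attention to the distinction between $\Gamma_R$ (which imposes an operator-norm bound) and $\Gamma_\infty$ (which does not).

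First I would establish Borel measurability of the stated map. The set $\Gamma_R({\bf X}_1, \ldots, {\bf X}_\ell \sqcup {\bf Y}_1, \ldots, {\bf Y}_\ell; M, d, \epsilon)$ is defined by finitely many strict inequalities on polynomial moments together with operator-norm conditions, hence is a Borel (in fact open) subset of its ambient finite-dimensional vector space. Since $\Phi_d$ is continuous, $\Phi_d^{-1}(\Gamma_R(\ldots))$ is a Borel subset of $U(d)^\ell \times \left(\prod_k (\M_d^{\sa})^{n_k}\right) \times \left(\prod_k (\M_d^{\sa})^{m_k}\right)$. Applying Fubini--Tonelli to the indicator function of this set against $\gamma_d^{\otimes\ell}$ (which is finite) shows that its slicewise $\gamma_d^{\otimes\ell}$-measure is a Borel function of $(({\bf A}_k), ({\bf B}_k))$; the claim below will identify this function with the one in the statement on the bounded region.

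Next I would prove the first displayed equality by direct application of Fubini--Tonelli to $\gamma_d^{\otimes\ell} \otimes \mu$ and the indicator of $\Phi_d^{-1}(\Gamma_R(\ldots))$. The key observation is that unitary conjugation preserves operator norm, so the tuple $\Phi_d((U_k), ({\bf A}_k), ({\bf B}_k))$ has the same coordinatewise operator norms as $(({\bf A}_k), ({\bf B}_k))$. Consequently the slice $\{(U_k): \Phi_d((U_k),({\bf A}_k),({\bf B}_k)) \in \Gamma_R(\ldots)\}$ is empty whenever some $\norm{A_{k,i}}$ or $\norm{B_{k,j}}$ exceeds $R$, and when all these norms are at most $R$ the operator-norm conditions in the definition of $\Gamma_R$ become automatic after conjugation, so membership in $\Gamma_R$ collapses to membership in $\Gamma_\infty$. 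The slice is therefore precisely $\Gamma_{\orb}({\bf Z}_1, \ldots, {\bf Z}_\ell : ({\bf A}_k)^\ell_{k=1}, ({\bf B}_k)^\ell_{k=1}; M, d, \epsilon)$, yielding the first equality (with domain restricted to the bounded region, since $\mu$-mass outside contributes zero to the left-hand side).

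Finally, for the second equality I would observe that the single-tuple moments $\tau_d(C_{k,i_1} \cdots C_{k,i_p}(I_d))$ appearing in the definition of $\Gamma_R({\bf X}_k \sqcup {\bf Y}_k; M, d, \epsilon)$ are, by cyclicity of the trace, invariant under simultaneous conjugation $({\bf A}_k, {\bf B}_k) \mapsto (U_k^* {\bf A}_k U_k, U_k^* {\bf B}_k U_k)$. These conditions are a subset of the moment conditions defining the joint microstate space, so if some $({\bf A}_k, {\bf B}_k)$ already fails to belong to $\Gamma_R({\bf X}_k \sqcup {\bf Y}_k; M, d, \epsilon)$, no choice of $(U_k)$'s can repair the failure, and hence $\Gamma_{\orb}({\bf Z}_1, \ldots, {\bf Z}_\ell : ({\bf A}_k)^\ell_{k=1}, ({\bf B}_k)^\ell_{k=1}; M, d, \epsilon) = \emptyset$. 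The integrand in the previous display therefore vanishes outside $\prod_{k=1}^\ell \Gamma_R({\bf X}_k \sqcup {\bf Y}_k; M, d, \epsilon)$ (after the natural reordering of coordinates that groups each ${\bf A}_k$ with its ${\bf B}_k$), giving the second integral formula. No serious obstacle is expected here; the only subtlety is the careful handling of the norm bound discrepancy between $\Gamma_R$ and $\Gamma_\infty$, which unitary invariance of operator norm resolves.
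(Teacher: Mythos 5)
Your proposal is correct and follows essentially the same route as the paper's own (very terse) proof: identify the relevant sets as Borel, apply Fubini's theorem, and use the two observations recorded immediately before the lemma in the paper — that conjugation preserves operator norm (so membership in $\Gamma_R$ is equivalent to membership in $\Gamma_\infty$ once the original tuple lies in the norm ball) and that the single-tuple moments are conjugation-invariant (so $\Gamma_\orb$ is empty when some $({\bf A}_k, {\bf B}_k) \notin \Gamma_R({\bf X}_k \sqcup {\bf Y}_k; M, d, \epsilon)$). Your write-up simply fills in the details the paper leaves implicit.
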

\begin{proof}
The result clearly follows from the fact that the sets and functions involved are Borel, by the above constructions, and by Fubini's Theorem.  
\end{proof}

Now we are able to demonstrate an alternate definition of the orbital bi-free entropy without the need to take a supremum over probability measures.

\begin{prop}
\label{prop:other-def-of-orbital}
For each $M, d \in \bN$, $\epsilon > 0$, and $R \in (0, \infty]$, let
\begin{align*}
&\tilde{\chi}_{\orb, R}({\bf Z}_1, \ldots, {\bf Z}_\ell; M, d, \epsilon) \\
&= \sup_{(({\bf A}_k)^\ell_{k=1}, ({\bf B}_k)^\ell_{k=1}) \in  \left( \prod^\ell_{k=1} (M_d^{\sa})^{n_k}_R\right) \times \left( \prod^\ell_{k=1} (M_d^{\sa})_R^{m_k}\right)} \log\left( \gamma^{\otimes \ell}_d\left( \Gamma_{\orb}({\bf Z}_1, \ldots, {\bf Z}_\ell: ({\bf A}_k)^\ell_{k=1}, ({\bf B}_k)^\ell_{k=1}; M, d, \epsilon)    \right)   \right) \\
&= \sup_{(({\bf A}_k)^\ell_{k=1}, ({\bf B}_k)^\ell_{k=1}) \in \prod^\ell_{k=1}  \Gamma_R({\bf X}_k \sqcup {\bf Y}_k; M, d, \epsilon)   } \log\left( \gamma^{\otimes \ell}_d\left( \Gamma_{\orb}({\bf Z}_1, \ldots, {\bf Z}_\ell: ({\bf A}_k)^\ell_{k=1}, ({\bf B}_k)^\ell_{k=1}; M, d, \epsilon)    \right)   \right),
\end{align*}
Note $\tilde{\chi}_{\orb, R}({\bf Z}_1, \ldots, {\bf Z}_\ell; M, d, \epsilon) \in [-\infty, 0]$.  Then
\[
\chi_{\orb, R}({\bf Z}_1, \ldots, {\bf Z}_\ell) = \inf_{M \in \bN, \epsilon > 0} \limsup_{d \to \infty} \frac{1}{d^2} \tilde{\chi}_{\orb, R}({\bf Z}_1, \ldots, {\bf Z}_\ell; M, d, \epsilon).
\]
\end{prop}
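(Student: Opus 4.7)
The plan is to reduce the proposition to a single identity at each finite level of the definition; namely, that for every $M, d \in \bN$ and $\epsilon > 0$,
\[
\chi_{\orb, R}({\bf Z}_1, \ldots, {\bf Z}_\ell; M, d, \epsilon) = \tilde{\chi}_{\orb, R}({\bf Z}_1, \ldots, {\bf Z}_\ell; M, d, \epsilon).
\]
Once this is in hand, the conclusion follows immediately from the definitions of both quantities by taking $\frac{1}{d^2}(\cdot)$, then $\limsup_{d \to \infty}$, then $\inf_{M \in \bN, \epsilon > 0}$.

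First, I would handle the equivalence of the two sups in the definition of $\tilde{\chi}_{\orb, R}$. If $(({\bf A}_k), ({\bf B}_k))$ is such that for some $k$, $({\bf A}_k, {\bf B}_k) \notin \Gamma_R({\bf X}_k \sqcup {\bf Y}_k; M, d, \epsilon)$, then as noted just before Lemma \ref{lem:orbital-integral}, the set $\Gamma_\orb({\bf Z}_1, \ldots, {\bf Z}_\ell : ({\bf A}_k), ({\bf B}_k); M, d, \epsilon)$ is empty, so the log evaluates to $-\infty$. Hence extending the supremum from $\prod_k \Gamma_R({\bf X}_k \sqcup {\bf Y}_k; M, d, \epsilon)$ out to the entire bounded ball $\prod_k (\M_d^{\sa})_R^{n_k + m_k}$ cannot change its value.

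Next, I would establish the key identity by applying Lemma \ref{lem:orbital-integral}. Set
\[
f(({\bf A}_k), ({\bf B}_k)) := \gamma_d^{\otimes \ell}\paren{\Gamma_\orb({\bf Z}_1, \ldots, {\bf Z}_\ell : ({\bf A}_k), ({\bf B}_k); M, d, \epsilon)};
\]
this is a non-negative Borel function bounded by $1$ and vanishing outside the bounded ball (since conjugation preserves norm). Lemma \ref{lem:orbital-integral} then gives $\chi_{\orb, R}({\bf Z}_1, \ldots, {\bf Z}_\ell; M, d, \epsilon; \mu) = \log \int f \, d\mu$ for each $\mu$. The standard fact that $\sup_\mu \int f \, d\mu = \sup f$ (with the upper bound trivial and the lower bound obtained by sweeping Dirac masses $\delta_{(({\bf A}_k), ({\bf B}_k))}$ across the bounded ball) combined with $\log \sup f = \sup \log f$, using the convention $\log 0 = -\infty$ and the fact that $\log$ is continuous and increasing on $(0, \infty)$, yields the desired equality.

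The steps are genuinely routine once the Borel measurability and Fubini manipulations of Lemma \ref{lem:orbital-integral} are granted. The only potentially delicate point is ensuring that the swap $\sup \log = \log \sup$ behaves correctly when $f \equiv 0$, so that both sides are $-\infty$; this is easy to check but should be written out. No serious obstacle is anticipated.
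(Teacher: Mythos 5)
Your proof is correct and follows essentially the same route as the paper: both rely on Lemma~\ref{lem:orbital-integral} for the Fubini/measurability step and on Dirac masses to pass from the supremum over measures to the supremum over points. The one difference is cosmetic: where you establish the exact pointwise identity $\chi_{\orb, R}(\cdot; M, d, \epsilon) = \tilde{\chi}_{\orb, R}(\cdot; M, d, \epsilon)$ (valid because $\sup_\mu \int f\,d\mu = \sup f$ holds exactly, not merely up to an additive error), the paper instead picks near-optimizers within an error of $1$, so its reverse inequality only shows $\tilde{\chi} - 1 \leq \chi$ pointwise, an error that is then discarded after dividing by $d^2$ and taking $\limsup$; your version renders that bookkeeping unnecessary.
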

\begin{proof}
First, it is clear that the two definitions of $\tilde{\chi}_{\orb, R}({\bf Z}_1, \ldots, {\bf Z}_\ell; M, d, \epsilon)$ are equivalent by the comments before Lemma \ref{lem:orbital-integral}.  Furthermore, Lemma \ref{lem:orbital-integral} implies that
\[
\chi_{\orb, R}({\bf Z}_1, \ldots, {\bf Z}_\ell; M, d, \epsilon; \mu) \leq \tilde{\chi}_{\orb, R}({\bf Z}_1, \ldots, {\bf Z}_\ell; M, d, \epsilon)
\]
for any $\mu \in \P\left(\left( \prod^\ell_{k=1} (M_d^{\sa})^{n_k}\right) \times \left( \prod^\ell_{k=1} (M_d^{\sa})^{m_k}\right)   \right)$.  Hence we clearly have
\[
\chi_{\orb, R}({\bf Z}_1, \ldots, {\bf Z}_\ell) \leq \inf_{M \in \bN, \epsilon > 0} \limsup_{d \to \infty} \frac{1}{d^2} \tilde{\chi}_{\orb, R}({\bf Z}_1, \ldots, {\bf Z}_\ell; M, d, \epsilon).
\]

To prove the reverse inequality, consider $M$ and $\epsilon$ fixed.  If $\tilde{\chi}_{\orb, R}({\bf Z}_1, \ldots, {\bf Z}_\ell; M, d, \epsilon) = -\infty$ for all sufficiently large $d$ then 
\[
\limsup_{d \to \infty} \frac{1}{d^2} \tilde{\chi}_{\orb, R}({\bf Z}_1, \ldots, {\bf Z}_\ell; M, d, \epsilon) \leq \chi_{\orb, R}({\bf Z}_1, \ldots, {\bf Z}_\ell; M, \epsilon)
\]
trivially follows.  Otherwise there exists an increasing sequence $(d_l)_{l\geq1}$ such that $\tilde{\chi}_{\orb, R}({\bf Z}_1, \ldots, {\bf Z}_\ell; M, d, \epsilon) > -\infty$ and
\[
\limsup_{d \to \infty} \frac{1}{d^2} \tilde{\chi}_{\orb, R}({\bf Z}_1, \ldots, {\bf Z}_\ell; M, d, \epsilon) = \lim_{l \to \infty} \frac{1}{d_l^2} \tilde{\chi}_{\orb, R}({\bf Z}_1, \ldots, {\bf Z}_\ell; M, d_l, \epsilon).
\]
For each $l \in \bN$, we can choose $(({\bf A}_{k,l})^\ell_{k=1}, ({\bf B}_{k,l})^\ell_{k=1}) \in \left( \prod^\ell_{k=1} (M_{d_l}^{\sa})^{n_k}_R\right) \times \left( \prod^\ell_{k=1} (M_{d_l}^{\sa})_R^{m_k}\right)$ such that
\[
\tilde{\chi}_{\orb, R}({\bf Z}_1, \ldots, {\bf Z}_\ell; M, d_l, \epsilon)-1 \leq \log\left( \gamma^{\otimes \ell}_{d_l}\left( \Gamma_{\orb}({\bf Z}_1, \ldots, {\bf Z}_\ell: ({\bf A}_{k,l})^\ell_{k=1}, ({\bf B}_{k,l})^\ell_{k=1}; M, {d_l}, \epsilon)    \right)   \right).
\]
Therefore, if $\delta_l \in  \P\left(\left( \prod^\ell_{k=1} (M_{d_l}^{\sa})^{n_k}\right) \times \left( \prod^\ell_{k=1} (M_{d_l}^{\sa})^{m_k}\right)   \right)$ is the point-mass measure at $(({\bf A}_{k,l})^\ell_{k=1}, ({\bf B}_{k,l})^\ell_{k=1})$, then Lemma \ref{lem:orbital-integral} implies that
\begin{align*}
&\tilde{\chi}_{\orb, R}({\bf Z}_1, \ldots, {\bf Z}_\ell; M, d_l, \epsilon)-1 \\
&\leq \log\left( \gamma^{\otimes \ell}_{d_l}\left( \Gamma_{\orb}({\bf Z}_1, \ldots, {\bf Z}_\ell: ({\bf A}_{k,l})^\ell_{k=1}, ({\bf B}_{k,l})^\ell_{k=1}; M, {d_l}, \epsilon)    \right)   \right) \\
&= \log\left(\int_{\left( \prod^\ell_{k=1} (M_{d_l}^{\sa})_R^{n_k}\right) \times \left( \prod^\ell_{k=1} (M_{d_l}^{\sa})^{m_k}_R\right)  } \gamma^{\otimes \ell}_{d_l}\left( \Gamma_{\orb}({\bf Z}_1, \ldots, {\bf Z}_\ell: ({\bf A}_{k,l})^\ell_{k=1}, ({\bf B}_{k,l})^\ell_{k=1}; M, {d_l}, \epsilon)  \right) \, d\delta_l \right) \\
&= \log\left(   ( \gamma_{d_l}^{\otimes \ell} \otimes \delta_l)\left(\Phi_{d_l}^{-1}(\Gamma_R({\bf X}_1, \ldots, {\bf X}_\ell \sqcup {\bf Y}_1, \ldots, {\bf Y}_\ell; M, d_l, \epsilon)   )\right)   \right)\\
&= \chi_{\orb, R}({\bf Z}_1, \ldots, {\bf Z}_\ell; M, d_l, \epsilon; \delta_l) \\
& \leq  \chi_{\orb, R}({\bf Z}_1, \ldots, {\bf Z}_\ell; M, d_l, \epsilon).
\end{align*}
Hence
\begin{align*}
\limsup_{d \to \infty} \frac{1}{d^2} \tilde{\chi}_{\orb, R}({\bf Z}_1, \ldots, {\bf Z}_\ell; M, d, \epsilon) &= \lim_{l \to \infty} \frac{1}{d_l^2} \tilde{\chi}_{\orb, R}({\bf Z}_1, \ldots, {\bf Z}_\ell; M, d_l, \epsilon) \\
& \leq \limsup_{l \to \infty} \frac{1}{d^2_l}(\chi_{\orb, R}({\bf Z}_1, \ldots, {\bf Z}_\ell; M, d_l, \epsilon) + 1) \\
& \leq \chi_{\orb, R}({\bf Z}_1, \ldots, {\bf Z}_\ell; M, \epsilon).
\end{align*}
Thus the result follows.
\end{proof}

\begin{rem}
We note that \cite{U2014} has an alternate characterization of the orbital free entropy in the case that the von Neumann algebras generated by each collection of left operators is hyperfinite.
Unfortunately, the arguments for such a characterization break down in the case of orbital bi-free entropy due to the fact that we are no longer dealing with a tracial state, so Jung's lemma \cite{J2007}*{Lemma 2.9} no longer applies.
To find an analogue of this result, one would need an obscure condition of changing all right operators to left ones while still maintaining hyperfiniteness.

In addition, it was shown in \cite{U2014} that the orbital free entropy depends only on the von Neumann algebras that the individual collections of operators generate.
However, this proof also does not extend to the bi-free situation.
The reason for this is the need to approximate other operators in the von Neumann algebra using Kaplansky's density theorem, which would cause the potential mixing of left and right operators in the polynomials.
This prevents us from obtaining microstate approximations of these polynomials in a similar fashion to the comments at the start of Section \ref{sec:Trans}.
\end{rem}

However, many basic properties of the orbital free entropy extend to the bi-free setting.  To begin, we have the following lemma showing the independence of $R$.

\begin{lem}
\label{lem:orbital-R-doesnt-matter}
Let $\rho = \max(\{1\} \cup \{\left\|X_{i,k}\right\| \, \mid\, 1 \leq i \leq n_k, 1 \leq k \leq \ell\} \cup \{\left\|Y_{j,k}\right\| \, \mid\, 1 \leq j \leq m_k, 1 \leq k \leq \ell\})$.  For any $R > \rho$, we have
\[
\chi_{\orb} ({\bf Z}_1, \ldots, {\bf Z}_\ell) = \chi_{\orb, R}({\bf Z}_1, \ldots, {\bf Z}_\ell)
\]
including the case $R = \infty$.
\end{lem}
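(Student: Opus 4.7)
The plan is to mirror the argument used in Proposition~\ref{prop:R-does-not-matter} but applied now at the level of the alternative characterization of $\chi_{\orb, R}$ provided by Proposition~\ref{prop:other-def-of-orbital}. Since $\Gamma_{R_1}(\cdot) \subseteq \Gamma_{R_2}(\cdot)$ whenever $R_1 \leq R_2$, the inequality $\chi_{\orb, R_1}({\bf Z}_1, \ldots, {\bf Z}_\ell) \leq \chi_{\orb, R_2}({\bf Z}_1, \ldots, {\bf Z}_\ell)$ is immediate from the definition. The real content of the lemma is therefore the reverse inequality $\chi_{\orb, R_2} \leq \chi_{\orb, R_1}$ for $R_2 > R_1 > \rho$, and by the same argument also for $R_2 = \infty$.

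Given $R_2 > R_1 > R_0 > \rho$, I would take the piecewise linear cutoff $g : \bR \to [-R_1, R_1]$ from the proof of Proposition~\ref{prop:R-does-not-matter}, extended to self-adjoint matrices via functional calculus. The observation driving the transfer to the orbital setting is that functional calculus commutes with unitary conjugation: $g(U^* A U) = U^* g(A) U$. Consequently
\[
\Phi_d((U_k), (g({\bf A}_k)), (g({\bf B}_k))) = ((g(U_k^* {\bf A}_k U_k))_{k=1}^\ell, (g(U_k^* {\bf B}_k U_k))_{k=1}^\ell),
\]
so the cutoff turns any orbital microstate for $(({\bf A}_k), ({\bf B}_k))$ into one for $((g({\bf A}_k)), (g({\bf B}_k)))$, up to the usual loosening of moment parameters. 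Because the single-variable moments of each $A_{k,i}$ and $B_{k,j}$ are unitarily invariant, membership in $\Gamma_{R_2}({\bf Z}_k; M_1, d, \epsilon_1)$ forces spectral concentration on $[-R_0, R_0]$ exactly as in Proposition~\ref{prop:R-does-not-matter}.

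With this, the strategy is to choose $M_1 \geq M$ and $0 < \epsilon_1 \leq \epsilon$ large/small enough so that for every $(({\bf A}_k), ({\bf B}_k)) \in \prod_{k=1}^\ell \Gamma_{R_2}({\bf Z}_k; M_1, d, \epsilon_1)$, the cutoff $(g({\bf A}_k), g({\bf B}_k))$ lies in $\prod_{k=1}^\ell \Gamma_{R_1}({\bf Z}_k; M, d, \epsilon)$ and one has the inclusion
\[
\Gamma_{\orb}({\bf Z}_1, \ldots, {\bf Z}_\ell : ({\bf A}_k), ({\bf B}_k); M_1, d, \epsilon_1) \subseteq \Gamma_{\orb}({\bf Z}_1, \ldots, {\bf Z}_\ell : (g({\bf A}_k)), (g({\bf B}_k)); M, d, \epsilon).
\]
Applying $\gamma_d^{\otimes \ell}$, taking logarithms, and passing to suprema over the respective microstate spaces then yields $\tilde\chi_{\orb, R_2}(\cdot; M_1, d, \epsilon_1) \leq \tilde\chi_{\orb, R_1}(\cdot; M, d, \epsilon)$; dividing by $d^2$, taking $\limsup_{d \to \infty}$, and then the infimum over $M, \epsilon$ completes the argument via Proposition~\ref{prop:other-def-of-orbital}. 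The $R_2 = \infty$ case requires no modification since $g$ sends any matrix to one of norm at most $R_1$ regardless of its starting norm. I expect the main obstacle to be purely bookkeeping of the parameters $(M_1, \epsilon_1)$ versus $(M, \epsilon)$; however, since only a set inclusion (rather than any Jacobian computation as in Proposition~\ref{prop:R-does-not-matter}) is needed here, no new ideas beyond those already appearing in that proof should be required.
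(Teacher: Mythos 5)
The overall skeleton is right — use a cutoff $h$ to convert an orbital microstate for $(({\bf A}_k),({\bf B}_k))$ into one for $((h({\bf A}_k)),(h({\bf B}_k)))$, observe that $h$ commutes with unitary conjugation so the same $(U_k)$'s work, and conclude via Proposition~\ref{prop:other-def-of-orbital}. That matches the paper's strategy. You are also correct that no Jacobian is involved. But your claim that the $R_2 = \infty$ case ``requires no modification'' is where the argument breaks, and unfortunately that case is the whole point of the lemma.

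The issue is the size estimate $\norm{h(A)-A}$, not the output norm of $h$. In Proposition~\ref{prop:R-does-not-matter}, the bound is of the shape $\norm{g(A)-A}_1 \leq (R_2-R_0)\,\tau_d\paren{P_{[-R_2,-R_0]\cup[R_0,R_2]}(A)}$, which uses a fixed finite $R_2$ as a multiplicative constant; as $R_2\to\infty$ this becomes vacuous, and indeed the piecewise linear $g$ is not even defined on all of $\bR$. In the orbital setting the starting microstates for $\tilde\chi_{\orb,\infty}$ have \emph{no} operator-norm bound, so a $\norm{\cdot}_1$-and-spectral-projection argument cannot control the error. Furthermore, knowing $\chi_{\orb,R_2}\le\chi_{\orb,R_1}$ for all finite $R_2>R_1>\rho$ only gives $\chi_{\orb}=\sup_{R<\infty}\chi_{\orb,R}=\chi_{\orb,R_1}$; it does not by itself bound $\chi_{\orb,\infty}\le\chi_{\orb,R_1}$, which is part of the statement. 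The paper handles this by using the \emph{hard} cutoff $f_R$ (which is globally defined) together with a genuinely different estimate: it controls $\norm{A_{k,i}-f_R(A_{k,i})}_M$ via the $M'$-th moment (forced small by membership in $\Gamma_\infty({\bf Z}_k;M',d,\epsilon_0)$) and then runs a generalized H\"older inequality in the $\norm{\cdot}_p$-norms where the $p$-norms of all factors are bounded by a moment-controlled constant $K$, rather than by an operator-norm bound. That $p$-norm argument is the new ingredient you are missing; without it, the $R=\infty$ portion of the lemma — and therefore the lemma as stated — is not established.
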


\begin{proof}
Fix $R > \rho$.  Clearly
\[
\chi_{\orb, \infty}({\bf Z}_1, \ldots, {\bf Z}_\ell) \geq \chi_{\orb, R}({\bf Z}_1, \ldots, {\bf Z}_\ell)
\]
by definitions so it suffices to prove the other inequality. To begin, define $f : \bR \to [-1,1]$ by
\[
f(z) = \begin{cases}
z & \text{if } z \in [-1,1]\\
-1 & \text{if }z < -1 \\
1 & \text{if }z > 1
\end{cases},
\]
and let $f_R : \bR \to [-R,R]$ by $f_R(z) = R f\left(\frac{z}{R}\right)$.  

Fix $M \in \bN$ and $\epsilon > 0$.
Let $K = \max\{(\rho^{2M}+1)^{\frac{1}{2M}}, R\} > 1$, let $0 < \epsilon_0 < \min(1,\frac{\epsilon}{2}))$, and choose an even $M' \in \bN$ such that $M' \geq 2M$ and
\[
R\left( \left(\frac{\rho}{R}\right)^{M'} + \frac{\epsilon_0}{R^{M'}}\right)^\frac{1}{M} < \frac{\epsilon}{2M K^{M-1}}.
\]

Let $({\bf A}_k, {\bf B}_k) \in \Gamma_\infty({\bf X}_k \sqcup {\bf Y}_k; M', d, \epsilon_0)$ be arbitrary.
Then $\tau_d(A_{k,i}^{M''}) < \varphi(X_{k, i}^{M''}) + \epsilon_0 \leq \rho^{M''} + \epsilon_0$ for all $1 \leq i \leq n_k$ and $M''$ even and not greater than $M'$.
Thus, if for $p \in [1, \infty)$, $\left\|\cdot\right\|_p$ denotes the $p$-norm on $\M_d$ with respect to $\tau_d$, we have that $\left\|A_{k,i}\right\|_p \leq \norm{A_{k,i}}_{M'} \leq K$ for all $p \leq M'$, and in particular, for all $p \leq 2M$.
Thus, if $a_1, a_2, \ldots, a_d$ are the eigenvalues of $A_{k,i}$ counting multiplicities, we have for $p < M$ that
\begin{align*}
\left\|A_{k,i} - f_R(A_{k,i})\right\|_p \leq
\left\|A_{k,i} - f_R(A_{k,i})\right\|_M
& \leq R \left( \frac{1}{d} \sum_{|a_q| > R} \left|\frac{a_q}{R}\right|^M\right)^\frac{1}{M} \\
& \leq R \left( \frac{1}{d} \sum_{|a_q| > R} \left|\frac{a_q}{R}\right|^{M'}\right)^\frac{1}{M} \\
& \leq R \left( \frac{\tau_d(A_{k,i}^{M'})}{R^{M'}}\right)^\frac{1}{M} \\
& \leq R\left( \left(\frac{\rho}{R}\right)^{M'} + \frac{\epsilon_0}{R^{M'}}\right)^\frac{1}{M} < \frac{\epsilon}{2 M K^{M-1}}.
\end{align*}
Moreover, clearly $\left\|f_R(A_{k,i})\right\|_p \leq R \leq K$ for every $p$, $k$, and $i$, and identical inequalities holds for the $B_{k,j}$'s.  

The above implies if $(U_k)^\ell_{k=1} \in \Gamma_{\orb}({\bf Z}_1, \ldots, {\bf Z}_\ell; ({\bf A}_k)^\ell_{k=1}, ({\bf B}_k)^\ell_{k=1}; M', d, \epsilon_0)$, then for all $p,q$ with $p+q \leq M$, for all $k_1, \ldots, k_p, l_1,\ldots, l_q \in \{1, \ldots, \ell\}$, for all valid indices $i_1, \ldots, i_p, j_1, \ldots, j_q$, we have
\begin{align*}
& \left| \tau_d\left( U_{k_1}^* f_R(A_{k_1, i_1}) U_{k_1} \cdots U_{k_p}^* f_R(A_{k_p, i_p})U_{k_p} U^*_{l_q} f_R(B_{l_q, j_l}) U_{l_q} \cdots U^*_{l_1} f_R(B_{l_1, j_1}) U_{i_1}\right) \right. \\
& \qquad \left. - \varphi\left(X_{k_1, i_1} \cdots X_{k_p, i_p} Y_{l_1, j_1} \cdots Y_{l_q, j_q}\right)   \right| \\
& \leq \left|  \tau_d\left( U_{k_1}^* f_R(A_{k_1, i_1}) U_{k_1} \cdots U_{k_p}^* f_R(A_{k_p, i_p})U_{k_p} U^*_{l_q} f_R(B_{l_q, j_l}) U_{l_q} \cdots U^*_{l_1} f_R(B_{l_1, j_1}) U_{i_1}\right) \right. \\
& \qquad \left. - \tau_d\left( U_{k_1}^* A_{k_1, i_1} U_{k_1} \cdots U_{k_p}^* A_{k_p, i_p}U_{k_p} U^*_{l_q} B_{l_q, j_l} U_{l_q} \cdots U^*_{l_1} B_{l_1, j_1} U_{i_1}\right) \right| \\
& \quad + \left|\tau_d\left( U_{k_1}^* A_{k_1, i_1} U_{k_1} \cdots U_{k_p}^* A_{k_p, i_p}U_{k_p} U^*_{l_q} B_{l_q, j_l} U_{l_q} \cdots U^*_{l_1} B_{l_1, j_1} U_{i_1}\right) - \varphi\left(X_{k_1, i_1} \cdots X_{k_p, i_p} Y_{l_1, j_1} \cdots Y_{l_q, j_q}\right)   \right|\\
& \leq \sum^p_{x=1} K^{p+q} \left\|U^*_{k_x}( f_R(A_{k_x, i_x}) - A_{k_x, i_x}) U_{k_x}  \right\|_{p+q} + \sum^q_{y=1} K^{p+q} \left\| U^*_{k_y}( f_R(B_{k_y, j_y}) - B_{k_y, j_y}) U_{k_y} \right\|_{p+q} + \epsilon_0 \\
&\leq M K^{M} \frac{\epsilon}{2M K^{M-1}}+ \frac{\epsilon}{2} = \epsilon.
\end{align*}
where the second inequality if shown by the generalized H\"{o}lder's inequality for matrices.  Hence 
\[
\Gamma_{\orb}({\bf Z}_1, \ldots, {\bf Z}_\ell; ({\bf A}_k)^\ell_{k=1}, ({\bf B}_k)^\ell_{k=1}; M', d, \epsilon_0) \subseteq \Gamma_{\orb}({\bf Z}_1, \ldots, {\bf Z}_\ell; ({\bf f_R(A_k)})^\ell_{k=1}, ({\bf f_R(B_k)})^\ell_{k=1}; M, d, \epsilon)
\]
thereby implying
\[
\tilde{\chi}_{\orb, \infty}({\bf Z}_1, \ldots, {\bf Z}_\ell; M', d, \epsilon_0) \leq \tilde{\chi}_{\orb, R}({\bf Z}_1, \ldots, {\bf Z}_\ell; M, d, \epsilon)
\]
as $\left\|f_R(A_{k,i})\right\| \leq R$ and $\left\|f_R(B_{k,j})\right\| \leq R$ for all $i,j,k$.  Hence Proposition \ref{prop:other-def-of-orbital} implies that
\[
\chi_{\orb, \infty}({\bf Z}_1, \ldots, {\bf Z}_\ell) \leq \chi_{\orb, R}({\bf Z}_1, \ldots, {\bf Z}_\ell). \qedhere
\]
\end{proof}

Some basic properties of orbital bi-free entropy are readily established.

\begin{prop}
\label{prop:orbital-base-properties}
The following hold:
\begin{enumerate}
\item $\chi_{\orb}({\bf Z}_1, \ldots, {\bf Z}_\ell) = -\infty$ if ${\bf X}_1, \ldots, {\bf X}_\ell \sqcup {\bf Y}_1, \ldots, {\bf Y}_\ell$ do not have finite-dimensional approximants.
\item $\chi_{\orb}({\bf Z}) = 0$ if ${\bf Z}$ has finite-dimensional approximants.  Otherwise $\chi_{\orb}({\bf Z}) = -\infty$.
\item $\chi_{\orb}({\bf Z}_1, \ldots, {\bf Z}_\ell) \leq \chi_{\orb}({\bf Z}_1, \ldots, {\bf Z}_q) + \chi_{\orb}({\bf Z}_{q+1}, \ldots, {\bf Z}_\ell)$ for all $1 \leq q < \ell$.
\end{enumerate}
\end{prop}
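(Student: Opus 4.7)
The plan is to handle the three parts in turn; each reduces to the definitions combined with results already established in this section.

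For part~(1), suppose the joint system lacks finite-dimensional approximants, so there exist $M \in \bN$, $\epsilon > 0$, and $R > \rho := \max\{\|X_{k,i}\|, \|Y_{k,j}\|\}$ with $\Gamma_R({\bf X}_1, \ldots, {\bf X}_\ell \sqcup {\bf Y}_1, \ldots, {\bf Y}_\ell; M, d, \epsilon) = \emptyset$ for infinitely many $d$. The contrapositive of the scaling observation in the second paragraph of Remark~\ref{rem:fda} (applied with $\epsilon/2$ in place of $\epsilon$) then forces $\Gamma_R(\ldots; M, d, \epsilon/2) = \emptyset$ for \emph{every} $d$. Consequently $\Phi_d^{-1}$ of this empty set is empty for every $d$ and every $\mu$, so $\chi_{\orb, R}({\bf Z}_1, \ldots, {\bf Z}_\ell; M, d, \epsilon/2; \mu) = -\infty$ uniformly, whence $\chi_{\orb, R}({\bf Z}_1, \ldots, {\bf Z}_\ell) = -\infty$. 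Since $R > \rho$, Lemma~\ref{lem:orbital-R-doesnt-matter} then gives $\chi_{\orb}({\bf Z}_1, \ldots, {\bf Z}_\ell) = -\infty$.

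For part~(2), with $\ell = 1$ the map $\Phi_d$ conjugates the single pair $({\bf A}, {\bf B})$ by a single unitary $U$. Each quantity $\tau_d(C_{k_1}\cdots C_{k_p}(I_d))$ reduces to $\tau_d$ of a word in the entries of ${\bf A}$ and ${\bf B}$, and simultaneous conjugation of all entries by $U$ leaves this invariant by traciality of $\tau_d$. Thus
\[
\Phi_d^{-1}\bigl(\Gamma_R({\bf Z}; M, d, \epsilon)\bigr) = U(d) \times \Gamma_R({\bf Z}; M, d, \epsilon),
\]
whose $(\gamma_d \otimes \mu)$-measure is $\mu(\Gamma_R({\bf Z}; M, d, \epsilon))$. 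Taking $\mu$ to be a point mass in $\Gamma_R$ (non-empty for large $d$ by FDA) gives value $\log 1 = 0$, so $\chi_{\orb}({\bf Z}) = 0$. When FDA fails, part~(1) specialised to $\ell = 1$ gives $\chi_\orb({\bf Z}) = -\infty$.

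For part~(3), I will work with the alternate formulation $\tilde\chi_{\orb, R}$ from Proposition~\ref{prop:other-def-of-orbital}. Fix $M, \epsilon, R$ and a tuple $(({\bf A}_k)_{k=1}^\ell, ({\bf B}_k)_{k=1}^\ell)$. Any $(U_k)_{k=1}^\ell$ in the full orbital microstate set projects into the orbital microstate sets for ${\bf Z}_1, \ldots, {\bf Z}_q$ and for ${\bf Z}_{q+1}, \ldots, {\bf Z}_\ell$, since the moment constraints defining each sub-system are a subset of those for the full system. Inside $U(d)^\ell = U(d)^q \times U(d)^{\ell-q}$ this gives
\[
\Gamma_\orb({\bf Z}_1, \ldots, {\bf Z}_\ell; \cdots) \subseteq \Gamma_\orb({\bf Z}_1, \ldots, {\bf Z}_q; \cdots) \times \Gamma_\orb({\bf Z}_{q+1}, \ldots, {\bf Z}_\ell; \cdots).
\]
Applying $\log \gamma_d^{\otimes \ell}$, taking supremum over $(({\bf A}_k), ({\bf B}_k))$, dividing by $d^2$, and passing to $\limsup$ in $d$ yields
\[
\limsup_d \tfrac{1}{d^2}\tilde\chi_{\orb, R}({\bf Z}_1,\ldots,{\bf Z}_\ell; M, d, \epsilon) \leq \limsup_d \tfrac{1}{d^2}\tilde\chi_{\orb, R}({\bf Z}_1,\ldots,{\bf Z}_q; M, d, \epsilon) + \limsup_d \tfrac{1}{d^2}\tilde\chi_{\orb, R}({\bf Z}_{q+1},\ldots,{\bf Z}_\ell; M, d, \epsilon),
\]
using that $\limsup$ of a sum is bounded by the sum of $\limsup$s. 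Taking infima over $(M, \epsilon)$ (with $\inf(f+g) \leq \inf f + \inf g$) and then supremum over $R$, which is trivial by Lemma~\ref{lem:orbital-R-doesnt-matter}, produces the inequality. I foresee no serious obstacle in any of the three parts: the essential content has been prepared in Remark~\ref{rem:fda}, Lemma~\ref{lem:orbital-R-doesnt-matter}, and Proposition~\ref{prop:other-def-of-orbital}, and what remains is bookkeeping.
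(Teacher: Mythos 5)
Your parts (1) and (2) are correct and follow the paper's route (the paper's proof is just a terse version of what you write, citing the same Remark~\ref{rem:fda} observation for (1) and the conjugation-invariance of $\Gamma_R$ for (2)). One cosmetic mismatch: you set $\rho = \max\{\|X_{k,i}\|,\|Y_{k,j}\|\}$, but Lemma~\ref{lem:orbital-R-doesnt-matter} requires $R > \max(\{1\}\cup\cdots)$; if the operators all have norm $< 1$ and the $R$ furnished by the failure of finite-dimensional approximants lies below $1$, Lemma~\ref{lem:orbital-R-doesnt-matter} does not apply to that $R$ directly. This is repairable (the Proposition~\ref{prop:R-does-not-matter} / Remark~\ref{rem:fda} mechanism shows that if $\Gamma_R$ is cofinally empty so is $\Gamma_{R'}$ for $R' > R$), but you should be explicit about it.

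The real issue is in part (3). Your set inclusion $\Gamma_\orb(\text{full}) \subseteq \Gamma_\orb(\text{first }q) \times \Gamma_\orb(\text{rest})$ and the resulting bound on $\tilde\chi_{\orb,R}(\cdots;M,d,\epsilon)$ are correct, as is the passage to $\limsup_d$. But the parenthetical justification for taking infima over $(M,\epsilon)$ is wrong: the inequality $\inf(f+g) \leq \inf f + \inf g$ is false in general (the correct general inequality is $\inf(f+g) \geq \inf f + \inf g$). Having $a(M,\epsilon) \leq b(M,\epsilon) + c(M,\epsilon)$ pointwise does not by itself give $\inf a \leq \inf b + \inf c$. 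The fix is the monotonicity that the paper flags right after Definition~\ref{defn:orbital-bi-free}: the quantities $a$, $b$, $c$ all decrease as $M$ increases and $\epsilon$ decreases, so the infimum over $(M,\epsilon)$ is a limit along the common directed net $M\to\infty$, $\epsilon\to 0$, and one may pass to the limit on both sides of $a \leq b + c$ along that net. Equivalently, for any $(M',\epsilon')$ and $(M'',\epsilon'')$, set $M = \max(M',M'')$ and $\epsilon = \min(\epsilon',\epsilon'')$, so that $\inf a \leq a(M,\epsilon) \leq b(M,\epsilon)+c(M,\epsilon) \leq b(M',\epsilon')+c(M'',\epsilon'')$, and then take infima on the right. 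Either fix is short, but the step as you wrote it does not stand.
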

\begin{proof}
Note (1) follows as if ${\bf X}_1, \ldots, {\bf X}_\ell \sqcup {\bf Y}_1, \ldots, {\bf Y}_\ell$ do not have finite-dimensional approximants, then 
\[
\Gamma_R({\bf X}_1, \ldots, {\bf X}_\ell \sqcup {\bf Y}_1, \ldots, {\bf Y}_\ell; M, d, \epsilon) = \emptyset
\]
for sufficiently large $M$, sufficiently small $\epsilon$, and for all $d$ (see Remark \ref{rem:fda}).  Note (2) follows as the $U(d)$ section of $\Phi_d^{-1}(\Gamma_R({\bZ}; M, d, \epsilon))$ is $U(d)$ if $\Gamma_R({\bZ}; M, d, \epsilon) \neq \emptyset$ and is $\emptyset$ if $\Gamma_R({\bZ}; M, d, \epsilon) = \emptyset$.  
Finally, (3) holds as if an $\ell$-tuple of unitaries works in the first step of the definition of $\chi_{\orb}({\bf Z}_1, \ldots, {\bf Z}_\ell)$, then the first $q$ work in the first step of the definition for $\chi_{\orb}({\bf Z}_1, \ldots, {\bf Z}_q) $ and the remainder work in the first step of the definition for $\chi_{\orb}({\bf Z}_{q+1}, \ldots, {\bf Z}_\ell)$.
\end{proof}

Like with the microstate bi-free entropy, the orbital bi-free entropy also plays reasonably well with respect to distributional limits.

\begin{prop}
\label{prop:limsup-orbital}
If ${\bf X}_1^{(l)}, \ldots, {\bf X}_\ell^{(l)} \sqcup {\bf Y}_1^{(l)}, \ldots, {\bf Y}_\ell^{(l)}$ converges in distribution to ${\bf X}_1, \ldots, {\bf X}_\ell \sqcup {\bf Y}_1, \ldots, {\bf Y}_\ell$ as in the sense of Proposition \ref{prop:limit-of-distributions}, then
\[
\chi_{\orb, R}({\bf Z}_1, \ldots, {\bf Z}_\ell) \geq \limsup_{l \to \infty} \chi_{\orb, R}({\bf Z}^{(l)}_1, \ldots, {\bf Z}^{(l)}_\ell)
\]
for every $R > 0$ including $R = \infty$.  Therefore, if there exists a uniform operator norm bound of these operators over $l$, we have
\[
\chi_{\orb}({\bf Z}_1, \ldots, {\bf Z}_\ell) \geq \limsup_{l \to \infty} \chi_{\orb}({\bf Z}^{(l)}_1, \ldots, {\bf Z}^{(l)}_\ell)
\]
\end{prop}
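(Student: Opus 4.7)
The plan is to adapt the proof of Proposition \ref{prop:limit-of-distributions} to the orbital setting. The central observation is that convergence in distribution is a statement purely about the state $\varphi$ and involves no matrix dimension parameter; consequently, for any $M \in \bN$ and $\epsilon > 0$, once $l$ is sufficiently large every moment of length at most $M$ in the ${\bf X}^{(l)}, {\bf Y}^{(l)}$ lies within $\epsilon$ of the corresponding moment in ${\bf X}, {\bf Y}$. This immediately yields, for some $L = L(M, \epsilon) \in \bN$, all $l \geq L$, and all $d \in \bN$, the containment
\[
\Gamma_R({\bf X}^{(l)}_1, \ldots, {\bf X}^{(l)}_\ell \sqcup {\bf Y}^{(l)}_1, \ldots, {\bf Y}^{(l)}_\ell; M, d, \epsilon) \subseteq \Gamma_R({\bf X}_1, \ldots, {\bf X}_\ell \sqcup {\bf Y}_1, \ldots, {\bf Y}_\ell; M, d, 2\epsilon).
\]

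The remainder of the proof simply pushes this inclusion through the layered definition of orbital bi-free entropy. Applying $\Phi_d^{-1}$ preserves the containment, and integrating against $\gamma_d^{\otimes \ell} \otimes \mu$ for any probability measure $\mu$ gives
\[
\chi_{\orb, R}({\bf Z}^{(l)}_1, \ldots, {\bf Z}^{(l)}_\ell; M, d, \epsilon; \mu) \leq \chi_{\orb, R}({\bf Z}_1, \ldots, {\bf Z}_\ell; M, d, 2\epsilon; \mu).
\]
Taking the supremum over $\mu$, multiplying by $d^{-2}$, and applying $\limsup_{d \to \infty}$, I would then pass to
\[
\chi_{\orb, R}({\bf Z}^{(l)}_1, \ldots, {\bf Z}^{(l)}_\ell; M, \epsilon) \leq \chi_{\orb, R}({\bf Z}_1, \ldots, {\bf Z}_\ell; M, 2\epsilon)
\]
for all $l \geq L$. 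Using $\chi_{\orb, R}({\bf Z}^{(l)}_1, \ldots, {\bf Z}^{(l)}_\ell) \leq \chi_{\orb, R}({\bf Z}^{(l)}_1, \ldots, {\bf Z}^{(l)}_\ell; M, \epsilon)$ from the infimum defining the former, I then take $\limsup_{l \to \infty}$ followed by the infimum over $M$ and $\epsilon$ to obtain the first inequality; the argument goes through for $R = \infty$ without change.

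For the second claim, under the hypothesis of a uniform operator norm bound on the ${\bf X}^{(l)}, {\bf Y}^{(l)}$, I would choose any $R$ strictly larger than this uniform bound and also larger than all the norms of the ${\bf X}, {\bf Y}$. Lemma \ref{lem:orbital-R-doesnt-matter} then equates $\chi_{\orb}$ with $\chi_{\orb, R}$ for each index $l$ and for the limit system, and the desired inequality follows from the first part.

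I do not anticipate any serious obstacle, since each step is a direct transfer from the free case treated in the orbital setting of \cite{U2014}. The one subtlety worth noting is the uniformity in $d$ of the moment-comparison step: convergence in distribution is a statement about $\varphi$ alone, so the threshold $L(M, \epsilon)$ can be chosen independently of $d$, which is precisely what allows the inner $\limsup_{d \to \infty}$ and the outer $\limsup_{l \to \infty}$ to be untangled cleanly.
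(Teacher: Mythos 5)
Your proposal is correct and follows essentially the same route as the paper's proof: use moment convergence to obtain the microstate-set inclusion uniformly in $d$ for large $l$, push this through $\Phi_d^{-1}$ and the integral against $\gamma_d^{\otimes\ell}\otimes\mu$, take the supremum over $\mu$, the limsup in $d$, the limsup in $l$, and the infimum over $(M,\epsilon)$, and finally invoke Lemma \ref{lem:orbital-R-doesnt-matter} to remove $R$ under the uniform norm bound.
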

\begin{proof}
As in the proof of Proposition \ref{prop:limit-of-distributions}, our convergence assumption tells us that all moments are converging to the correct values, and so for any $M \in \bN$ and $\epsilon > 0$ we have for large enough $l$ that 
	\[
		\Gamma_R\left({\bf X}^{(l)}_1, \ldots, {\bf X}^{(l)}_\ell \sqcup {\bf Y}^{(l)}_1, \ldots, {\bf Y}^{(l)}_\ell; M, d, \epsilon\right) \subseteq \Gamma_R( {\bf X}_1, \ldots, {\bf X}_\ell \sqcup {\bf Y}_1, \ldots, {\bf Y}_\ell ; M, d, 2\epsilon),
	\]
since the sets involved see only finitely many moments.
Hence for any $\mu \in\P\left(\left( \prod^\ell_{k=1} (M_d^{\sa})^{n_k}\right) \times \left( \prod^\ell_{k=1} (M_d^{\sa})^{m_k}\right)\right)$, we have that
\begin{align*}
\chi_{\orb,R}\left({\bf Z}^{(l)}_1, \ldots, {\bf Z}^{(l)}_\ell; M, d, \epsilon; \mu \right)& \leq \chi_{\orb,R}\left({\bf Z}_1, \ldots, {\bf Z}_\ell; M, d, 2\epsilon; \mu \right) \\
& \leq \chi_{\orb,R}\left({\bf Z}_1, \ldots, {\bf Z}_\ell ; M, d, 2\epsilon\right).
\end{align*}
By taking the appropriate sups, limsup, and infimums, we obtain 
\[
\chi_{\orb, R}({\bf Z}_1, \ldots, {\bf Z}_\ell) \geq \limsup_{l \to \infty} \chi_{\orb, R}({\bf Z}^{(l)}_1, \ldots, {\bf Z}^{(l)}_\ell)
\]
for every $R > 0$ including $R = \infty$. The remaining equation then follows from Lemma \ref{lem:orbital-R-doesnt-matter}.
\end{proof}

Of greater interest is how the orbital bi-free entropy behaves with respect to bi-free collections.  In particular, the following proof uses similar ideas as those used in Lemma \ref{lem:large-portion-good} and Theorem \ref{thm:micro-bi-free-additive}.

\begin{thm}
\label{thm:orbital-partial-additive-bi-free}
If ${\bf Z}_1$ and ${\bf X}_2, \ldots, {\bf X}_\ell \sqcup {\bf Y}_2, \ldots, {\bf Y}_\ell$ are as described above and are bi-free with respect to $\varphi$, then
\[
\chi_{\orb}({\bf Z}_1, \ldots, {\bf Z}_\ell) = \chi_{\orb}({\bf Z}_1) + \chi_{\orb}({\bf Z}_2, \ldots, {\bf Z}_\ell).
\]
\end{thm}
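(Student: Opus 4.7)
The plan is to reduce to the two-block case and then adapt the Fubini-style argument from Lemma~\ref{lem:large-portion-good} to the orbital setting. First I would dispose of the trivial cases: if either ${\bf Z}_1$ or $({\bf Z}_2, \ldots, {\bf Z}_\ell)$ fails to have finite-dimensional approximants, then so does the joint system (by simply discarding variables), and Proposition~\ref{prop:orbital-base-properties}(1)--(2) together with subadditivity force both sides to equal $-\infty$. In the remaining case, ${\bf Z}_1$ has finite-dimensional approximants, so Proposition~\ref{prop:orbital-base-properties}(2) gives $\chi_{\orb}({\bf Z}_1) = 0$ and the $\leq$ direction follows from Proposition~\ref{prop:orbital-base-properties}(3). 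What remains is to show $\chi_{\orb}({\bf Z}_1, \ldots, {\bf Z}_\ell) \geq \chi_{\orb}({\bf Z}_2, \ldots, {\bf Z}_\ell)$.

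For this I would use the characterization from Proposition~\ref{prop:other-def-of-orbital}. Fix $R > \rho$, $M \in \bN$, and $\epsilon > 0$. Imitating the first step in the proof of Lemma~\ref{lem:large-portion-good}, and invoking the bi-freeness of ${\bf Z}_1$ from $({\bf Z}_2, \ldots, {\bf Z}_\ell)$, produce $M_1 \geq M$ and $0 < \epsilon_1 < \epsilon$ so that whenever $({\bf A}_1, {\bf B}_1) \in \Gamma_R({\bf Z}_1; M_1, d, \epsilon_1)$, $({\bf A}_k, {\bf B}_k)_{k=2}^\ell \in \prod_{k=2}^\ell \Gamma_R({\bf Z}_k; M_1, d, \epsilon_1)$, and in addition $\{A_{1,i}, B_{1,j}\}_{i,j}$ is $(M_1, \epsilon_1)$-free from $\bigcup_{k \geq 2}\{A_{k, i'}, B_{k, j'}\}_{i',j'}$, the full tuple lands in $\Gamma_R({\bf X}_1, \ldots, {\bf X}_\ell \sqcup {\bf Y}_1, \ldots, {\bf Y}_\ell; M, d, \epsilon)$. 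For each $d$ I would then pick $({\bf A}_k, {\bf B}_k)_{k=2}^\ell$ attaining the supremum in $\tilde{\chi}_{\orb, R}({\bf Z}_2, \ldots, {\bf Z}_\ell; M_1, d, \epsilon_1)$ up to a factor of $e$, and choose any $({\bf A}_1, {\bf B}_1) \in \Gamma_R({\bf Z}_1; M_1, d, \epsilon_1)$ using finite-dimensional approximants.

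The key step is then a Fubini/Haar argument. For any $\theta > 0$, Lemma~\ref{lem:voi-lem-about-lots-of-matrices-making-asymptotic-freeness} (used in the $p = 2$ form, with one unitary held fixed and translation-invariance of Haar measure taking care of the other $U_k$'s) provides an $N$ such that, for all $d \geq N$ and for every fixed $(U_2, \ldots, U_\ell) \in U(d)^{\ell-1}$, the $\gamma_d$-measure of $U_1$ making $\{U_1^* A_{1,i} U_1, U_1^* B_{1,j} U_1\}$ be $(M_1, \epsilon_1)$-free from $\bigcup_{k \geq 2}\{U_k^* A_{k, i'} U_k, U_k^* B_{k, j'} U_k\}$ exceeds $1 - \theta$. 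Integrating over $(U_2, \ldots, U_\ell)$ restricted to the orbital good set for $({\bf Z}_2, \ldots, {\bf Z}_\ell)$ yields
\[
(\gamma_d^{\otimes \ell} \otimes \delta)\bigl(\Phi_d^{-1}(\Gamma_R(\cdots; M, d, \epsilon))\bigr) \geq (1 - \theta)\, \gamma_d^{\otimes(\ell-1)}\bigl(\Gamma_{\orb}({\bf Z}_2, \ldots, {\bf Z}_\ell; ({\bf A}_k, {\bf B}_k)_{k=2}^\ell; M_1, d, \epsilon_1)\bigr),
\]
where $\delta$ is the point-mass at the chosen microstates. Taking logarithms, dividing by $d^2$, sending $d \to \infty$ and then $\theta \to 0$, and passing to the infimum in $(M, \epsilon)$ gives $\chi_{\orb, R}({\bf Z}_1, \ldots, {\bf Z}_\ell) \geq \chi_{\orb, R}({\bf Z}_2, \ldots, {\bf Z}_\ell)$, and finally taking the supremum over $R$ closes the inequality.

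The main obstacle is the first-step parameter matching: translating an $(M_1, \epsilon_1)$-asymptotic-freeness condition into an $(M, \epsilon)$ accuracy on the joint bi-free moments. This requires building the auxiliary tracial free model $(\A', \varphi')$ compatible with the bi-free cumulants of $({\bf Z}_1; {\bf Z}_2, \ldots, {\bf Z}_\ell)$, exactly as in Lemma~\ref{lem:large-portion-good}, and verifying that vanishing mixed bi-free cumulants are compatible with the free-product identification with the reversed order on right variables. Once this dictionary is in place, the Haar integration is essentially mechanical and follows the pattern already established in \cite{U2014}.
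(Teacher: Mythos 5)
Your proposal follows essentially the same route as the paper's own proof: reduce to the nontrivial case where $\chi_{\orb}({\bf Z}_1) = 0$ so that subadditivity gives one inequality, use Proposition~\ref{prop:other-def-of-orbital} to select a $d$-subsequence and near-optimal microstates for $({\bf Z}_2,\ldots,{\bf Z}_\ell)$, pick any microstate for ${\bf Z}_1$ from finite-dimensional approximants, then invoke the Lemma~\ref{lem:large-portion-good}-style parameter-matching together with Lemma~\ref{lem:voi-lem-about-lots-of-matrices-making-asymptotic-freeness} and a Fubini/Haar-integration argument over $U_1$ to get the reverse inequality. The only cosmetic differences are that the paper fixes $M$ and only shrinks $\epsilon$ in the parameter-matching step (where you allow $M_1 \geq M$), and it takes the Haar-measure bound to be simply $\tfrac12$ rather than $1-\theta$ with $\theta \to 0$; neither changes the substance.
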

\begin{proof}
First, suppose that ${\bf Z}_1$ does not have finite-dimensional approximants.   Then ${\bf X}_1, \ldots, {\bf X}_\ell \sqcup {\bf Y}_1, \ldots, {\bf Y}_\ell$ also does not have finite-dimensional approximants, so the definition of the orbital bi-free entropy implies that
\[
\chi_{\orb}({\bf Z}_1, \ldots, {\bf Z}_\ell) = -\infty = \chi_{\orb}({\bf Z}_1) = \chi_{\orb}({\bf Z}_1) + \chi_{\orb}({\bf Z}_2, \ldots, {\bf Z}_\ell).
\]
Hence we may assume that ${\bf Z}_1$ has finite-dimensional approximants so $ \chi_{\orb}({\bf Z}_1) = 0$ by Proposition \ref{prop:orbital-base-properties}.

Next, suppose that $\chi_{\orb}({\bf Z}_2, \ldots, {\bf Z}_\ell) = - \infty$.  Since Proposition \ref{prop:orbital-base-properties} implies then that $\chi_{\orb}({\bf Z}_1, \ldots, {\bf Z}_\ell) \leq -\infty$, the equation still holds.  Hence we may assume that $\chi_{\orb}({\bf Z}_1, \ldots, {\bf Z}_\ell) > -\infty$.  Furthermore, as $\chi_{\orb}({\bf Z}_1, \ldots, {\bf Z}_\ell) \leq \chi_{\orb}({\bf Z}_1) + \chi_{\orb}({\bf Z}_2, \ldots, {\bf Z}_\ell)$ by Proposition \ref{prop:orbital-base-properties}, it suffices to prove the other inequality with $ \chi_{\orb}({\bf Z}_1) = 0$.

Fix $R   > \max(\{1\} \cup \{\left\|X_{i,k}\right\| \, \mid\, 1 \leq i \leq n_k, 1 \leq k \leq \ell\} \cup \{\left\|Y_{j,k}\right\| \, \mid\, 1 \leq j \leq m_k, 1 \leq k \leq \ell\})$ and fix $M \in \bN$ and $\epsilon > 0$.  By the same argument as at the start of the proof of Lemma \ref{lem:large-portion-good}, there exists an $\epsilon_1 > 0$ such that if
\begin{itemize}
\item $({\bf A}_1, {\bf B}_1) \in \Gamma_R({\bf X}_1 \sqcup {\bf Y}_1; M, d, \epsilon_1)$, 
\item $(({\bf A}_k)^\ell_{k=2}, ({\bf B}_k)^\ell_{k=2}) \in \Gamma_R({\bf X}_2, \ldots, {\bf X}_\ell \sqcup {\bf Y}_2, \ldots, {\bf Y}_\ell; M, d, \epsilon_1)$, and
\item $({\bf A}_1, {\bf B}_1)$ and $(({\bf A}_k)^\ell_{k=2}, ({\bf B}_k)^\ell_{k=2})$ are $(M, \epsilon_1)$-free, 
\end{itemize}
then $(({\bf A}_k)^\ell_{k=1}, ({\bf B}_k)^\ell_{k=1}) \in \Gamma_R({\bf X}_1, \ldots, {\bf X}_\ell \sqcup {\bf Y}_1, \ldots, {\bf Y}_\ell; M, d, \epsilon)$.

Since by Lemma \ref{lem:orbital-R-doesnt-matter} we have that $\chi_{\orb, R}({\bf Z}_2, \ldots, {\bf Z}_\ell) = \chi_{\orb}({\bf Z}_2, \ldots, {\bf Z}_\ell) > - \infty$, Proposition \ref{prop:other-def-of-orbital} implies there exists an increasing sequence $(d_l)_{l\geq 1}$ such that $\tilde{\chi}_{\orb, R}({\bf Z}_2, \ldots, {\bf Z}_\ell; M, d_l, \epsilon_1) > -\infty$ and
\[
\limsup_{d \to \infty} \frac{1}{d^2} \tilde{\chi}_{\orb, R}({\bf Z}_2, \ldots, {\bf Z}_\ell; M, d, \epsilon_1) = \lim_{l \to \infty} \frac{1}{d_l^2} \tilde{\chi}_{\orb, R}({\bf Z}_2, \ldots, {\bf Z}_\ell; M, d_l, \epsilon_1).
\]
For each $l \in \bN$, choose $(({\bf A}_{k,l})^\ell_{k=2}, ({\bf B}_{k,l})^\ell_{k=2}) \in \left( \prod^\ell_{k=2} (M_{d_l}^{\sa})^{n_k}_R\right) \times \left( \prod^\ell_{k=2} (M_{d_l}^{\sa})_R^{m_k}\right)$ such that
\[
-\infty < \tilde{\chi}_{\orb, R}({\bf Z}_2, \ldots, {\bf Z}_\ell; M, d_l, \epsilon_1)-1 \leq \log\left( \gamma^{\otimes \ell-1}_{d_l}\left( \Gamma_{\orb}({\bf Z}_2, \ldots, {\bf Z}_\ell: ({\bf A}_{k,l})^\ell_{k=2}, ({\bf B}_{k,l})^\ell_{k=2}; M, {d_l}, \epsilon_1)    \right)   \right).
\]
Note this implies $\gamma^{\otimes \ell-1}_{d_l}\left( \Gamma_{\orb}({\bf Z}_2, \ldots, {\bf Z}_\ell: ({\bf A}_{k,l})^\ell_{k=2}, ({\bf B}_{k,l})^\ell_{k=2}; M, {d_l}, \epsilon_1) \right) > 0$.
Furthermore, as ${\bf Z}_1$ has finite-dimensional approximants, for $l$ sufficiently large we may choose a fixed $({\bf A}_{1,l}, {\bf B}_{1,l}) \in \Gamma_R({\bf X}_1 \sqcup {\bf Y}_1; M, d_l, \epsilon_1)$.

To simplify notation, let
\begin{align*}
\Psi(M, d_l, \epsilon) &= \Gamma_{\orb}({\bf Z}_1, \ldots, {\bf Z}_\ell; ({\bf A}_{k,l})^\ell_{k=1}, ({\bf B}_{k,l})^\ell_{k=1}; M, d_l, \epsilon)\\
\Theta(M, d_l, \epsilon_1)&= \Gamma_{\orb}({\bf Z}_2, \ldots, {\bf Z}_\ell; ({\bf A}_{k,l})^\ell_{k=2}, ({\bf B}_{k,l})^\ell_{k=2}; M, d_l, \epsilon_1) \\
\Omega(M, d_l, \epsilon_1) &= \{(U_k)^\ell_{k=1} \in U(d_l)^{\ell} \, \mid \, (U_1^*  {\bf A}_{1,l} U_1, U_1^*{\bf B}_{1,l} U_1), ((U_k^*{\bf A}_{k,l})^\ell_{k=2}U_k, U_k^*({\bf B}_{k,l})^\ell_{k=2}U_k)\text{ are } (M, \epsilon_1)\text{-free}\},
\end{align*}
and let $\mu_{d_l}$ be the probability measure obtained by restricting and renormalizing $\gamma^{\otimes \ell-1}$ to $\Theta(M, d_l, \epsilon_1)$.

%
Notice that by the choice of $\epsilon_1$ and the conditions defining the sets in question, we have
\[
\paren{U(d_l) \times \Theta(M, d_l, \epsilon_1)} \cap \Omega(M, d_l, \epsilon_1) \subseteq \Psi(M, d_l, \epsilon).
\]

By Lemma \ref{lem:voi-lem-about-lots-of-matrices-making-asymptotic-freeness} (with $p = 1$) there exists a $D_0 \in \bN$ such that
\[
\gamma_{d_l}(\{U_1 \in U(d_l) \, \mid \, (U_k)^\ell_{k=1} \in \Omega(M, d_l, \epsilon_1)\}) > \frac{1}{2}
\]
for every $d_l \geq D_0$ and every $(U_k)^\ell_{k=2} \in U(d_l)^{\ell-1}$.  Hence for all $d_l \geq D_0$ we have that
\begin{align*}
\frac{\gamma^{\otimes \ell}_{d_l}(\Psi(M, d_l, \epsilon))}{\gamma^{\otimes \ell-1}_{d_l}(\Theta(M, d_l, \epsilon_1))} &\geq (\gamma_{d_l} \otimes \mu_{d_l})(\Psi(M, d_l, \epsilon)) \\
&\geq (\gamma_{d_l} \otimes \mu_{d_l})((U(d_l) \times \Theta(M, d_l, \epsilon_1)) \cap \Omega(M, d_\ell, \epsilon_1)) \\
& = \int_{\Theta(M, d_l, \epsilon_1)} \gamma_{d_l}(\{U_1 \in U(d_l) \, \mid \, (U_k)^\ell_{k=1} \in \Omega(M, d_\ell, \epsilon_1)\}) \, d\mu_{d_l}((U_k)^\ell_{k=2}) > \frac{1}{2}
\end{align*}
by Fubini's Theorem.  Hence
\begin{align*}
\tilde{\chi}_{\orb, R}({\bf Z}_2, \ldots, {\bf Z}_\ell; M, d_l, \epsilon_1) & \leq 1+ \log\left( \gamma^{\otimes \ell-1}_{d_l}\left(\Theta(M, d_l, \epsilon_1) \right)   \right) \\
&<  1+\log(2)+ \log\left( \gamma^{\otimes \ell}_{d_l}\left( \Psi(M, d_l, \epsilon)  \right)   \right)\\
& \leq 1 + \log(2) + \tilde{\chi}_{\orb, R}({\bf Z}_1, \ldots, {\bf Z}_\ell ; M, d_l, \epsilon)
\end{align*}
whenever $d_l \geq D_0$.  Thus as $\tilde{\chi}_{\orb, R}({\bf Z}_2, \ldots, {\bf Z}_\ell; M, d, \epsilon_1)$ from Proposition \ref{prop:other-def-of-orbital} decreases as $M$ increases and as $\epsilon_1$ decreases, we have
\begin{align*}
\chi_{\orb, R}({\bf Z}_2, \ldots, {\bf Z}_\ell) & \leq \limsup_{d \to \infty} \frac{1}{d^2} \tilde{\chi}_{\orb, R}({\bf Z}_2, \ldots, {\bf Z}_\ell; M, d, \epsilon_1)\\
&= \lim_{l \to \infty}\frac{1}{d_l^2} \tilde{\chi}_{\orb, R}({\bf Z}_2, \ldots, {\bf Z}_\ell; M, d_l, \epsilon_1)\\
&\leq \limsup_{l \to \infty} \frac{1}{d_l^2} \left( 1 + \log(2) + \tilde{\chi}_{\orb, R}({\bf Z}_1, \ldots, {\bf Z}_\ell ; M, d_l, \epsilon)\right) \\
&\leq \limsup_{d \to \infty} \frac{1}{d^2} \tilde{\chi}_{\orb, R}({\bf Z}_1, \ldots, {\bf Z}_\ell ; M, d, \epsilon).
\end{align*}
Hence Proposition \ref{prop:other-def-of-orbital} implies that $\chi_{\orb, R}({\bf Z}_2, \ldots, {\bf Z}_\ell) \leq \chi_{\orb, R}({\bf Z}_1, \ldots, {\bf Z}_\ell)$.
\end{proof}

We are finally able to compute the orbital bi-free entropy of certain collections.  In particular, in the following case the orbital bi-free entropy is maximized.

\begin{cor}
\label{cor:orbital-bi-free-zero}
If ${\bf Z}_1$, ${\bf Z}_2$, $\ldots$, ${\bf Z}_\ell$ are bi-free with respect to $\varphi$ and individually have finite-dimensional approximants, then $\chi_{\orb}({\bf Z}_1, \ldots, {\bf Z}_\ell) = 0$.
\end{cor}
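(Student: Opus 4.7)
The plan is to proceed by induction on $\ell$. The base case $\ell = 1$ is handled directly by Proposition \ref{prop:orbital-base-properties}(2): since ${\bf Z}_1$ has finite-dimensional approximants, $\chi_{\orb}({\bf Z}_1) = 0$.

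For the inductive step, I would assume the result holds for any $(\ell-1)$-tuple of bi-free collections with finite-dimensional approximants. Given our $\ell$-tuple $({\bf Z}_1, \ldots, {\bf Z}_\ell)$, the bi-freeness of the full family implies in particular that ${\bf Z}_1$ is bi-free from the collective pair $({\bf X}_2, \ldots, {\bf X}_\ell \sqcup {\bf Y}_2, \ldots, {\bf Y}_\ell)$, so that Theorem \ref{thm:orbital-partial-additive-bi-free} applies and yields
\[
\chi_{\orb}({\bf Z}_1, \ldots, {\bf Z}_\ell) = \chi_{\orb}({\bf Z}_1) + \chi_{\orb}({\bf Z}_2, \ldots, {\bf Z}_\ell).
\]
The first summand is $0$ by the base case. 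For the second summand I would apply the inductive hypothesis: the sub-collection $({\bf Z}_2, \ldots, {\bf Z}_\ell)$ inherits bi-freeness from the full family, and each ${\bf Z}_k$ retains finite-dimensional approximants by assumption, so this term also equals $0$.

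Combining these gives $\chi_{\orb}({\bf Z}_1, \ldots, {\bf Z}_\ell) = 0$, completing the induction. There is no real obstacle here, since all the heavy lifting has already been done in Theorem \ref{thm:orbital-partial-additive-bi-free} and Proposition \ref{prop:orbital-base-properties}; the only things to verify are that bi-freeness of the whole family passes to the singleton/remainder partition, which is immediate from the definition of bi-freeness, and that the orbital entropies involved are not $-\infty$ so that the additive identity in Theorem \ref{thm:orbital-partial-additive-bi-free} is unambiguously $0 + 0 = 0$; this is guaranteed by the finite-dimensional approximants hypothesis via Proposition \ref{prop:orbital-base-properties}(1)--(2).
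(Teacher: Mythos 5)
Your proof is correct and takes essentially the same route as the paper: the paper's one-line proof ("immediately follows from Theorem \ref{thm:orbital-partial-additive-bi-free} and part (2) of Proposition \ref{prop:orbital-base-properties}") is implicitly the same iteration of peeling off ${\bf Z}_1$ that you spell out as an explicit induction. Your additional observations about bi-freeness passing to the singleton/remainder partition and about avoiding $-\infty$ ambiguity are sound and make the argument self-contained.
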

\begin{proof}
This immediately follows from Theorem \ref{thm:orbital-partial-additive-bi-free} and part (2) of Proposition \ref{prop:orbital-base-properties}.
\end{proof}

To finish off this section, we note an improvement to the subadditivity result for microstate bi-free entropy.  In particular, the following gives us a smaller upper bound for the joint microstate bi-free entropy in terms of the individual microstate bi-free entropies.

\begin{thm}
\label{thm:orbital-subadditive-with-micro}
With the notation used throughout this section, we have that
\[
\chi({\bf X}_1, \ldots, {\bf X}_\ell \sqcup {\bf Y}_1, \ldots, {\bf Y}_\ell   ) \leq \chi_{\orb}({\bf Z}_1, \ldots, {\bf Z}_\ell) + \sum^\ell_{k=1} \chi({\bf Z}_k)
\]
\end{thm}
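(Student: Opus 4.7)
The plan is to bound $\lambda_{d,n+m}\paren{\Gamma_R({\bf X}_1, \ldots, {\bf X}_\ell \sqcup {\bf Y}_1, \ldots, {\bf Y}_\ell; M, d, \epsilon)}$ from above by the product $\prod_k \lambda\paren{\Gamma_R({\bf Z}_k; M, d, \epsilon)}$ times a uniform estimate for $\gamma_d^{\otimes\ell}\paren{\Gamma_{\orb}}$, so that the orbital bi-free entropy provides the correction to the crude subadditivity bound of Proposition~\ref{prop:micro-subadditive}.

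The key geometric input is a change-of-variables identity. Put $V = V_{M,d,\epsilon} := \prod_{k=1}^{\ell}\Gamma_R({\bf Z}_k; M, d, \epsilon)$. Since conjugation by a unitary preserves both operator norm and the trace of any word, each factor is invariant under the corresponding component of the $U(d)^\ell$-action on $\prod_k (\M_d^\sa)^{n_k+m_k}$, so $V$ is $U(d)^\ell$-invariant and, trivially, $\Gamma_R({\bf X}_1, \ldots, {\bf X}_\ell \sqcup {\bf Y}_1, \ldots, {\bf Y}_\ell; M, d, \epsilon) \subseteq V$. Because conjugation on $\M_d^\sa$ is a Hilbert--Schmidt isometry and hence preserves Lebesgue measure, a brief Fubini computation (swapping the $U$- and $A$-integrals and using unitary invariance of $\lambda$) shows that for any measurable $E \subseteq V$,
\[
	\paren{\gamma_d^{\otimes\ell} \otimes \lambda|_V}\paren{\Phi_d^{-1}(E)} = \lambda(E).
\]
Applying this with $E = \Gamma_R({\bf X}_1, \ldots, {\bf X}_\ell \sqcup {\bf Y}_1, \ldots, {\bf Y}_\ell; M, d, \epsilon)$ and invoking Lemma~\ref{lem:orbital-integral} with $\mu = \lambda|_V / \lambda(V)$ gives
\[
	\lambda\paren{\Gamma_R(\ldots; M, d, \epsilon)} = \int_V \gamma_d^{\otimes\ell}\paren{\Gamma_{\orb}({\bf Z}_1, \ldots, {\bf Z}_\ell; ({\bf A}_k)_k, ({\bf B}_k)_k; M, d, \epsilon)}\,d\lambda.
\]
Bounding the integrand by its supremum over $V$ and taking logarithms yields
\[
	\chi_R(\ldots; M, d, \epsilon) \leq \sum_{k=1}^{\ell} \chi_R({\bf Z}_k; M, d, \epsilon) + \tilde{\chi}_{\orb, R}(\ldots; M, d, \epsilon).
\]

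To finish, I divide by $d^2$ and add the normalization $\frac{n+m}{2}\log d$, split as $\sum_k \frac{n_k+m_k}{2}\log d$ across the individual terms; then I take $\limsup_{d\to\infty}$, where subadditivity of $\limsup$ on the right preserves the inequality. I then take the infimum over $M \in \bN$ and $\epsilon > 0$. Both $\chi_R({\bf Z}_k; M, \epsilon)$ and $\limsup_d \frac{1}{d^2}\tilde{\chi}_{\orb,R}(\ldots; M, d, \epsilon)$ are monotonically decreasing as $M$ grows and $\epsilon$ shrinks, so the infimum of the sum equals the sum of the infima; Proposition~\ref{prop:other-def-of-orbital} then identifies the orbital term with $\chi_{\orb, R}(\ldots)$, yielding
\[
	\chi_R(\ldots) \leq \sum_{k=1}^{\ell} \chi_R({\bf Z}_k) + \chi_{\orb, R}(\ldots).
\]
Choosing $R$ larger than all of $\norm{X_{i,k}}$ and $\norm{Y_{j,k}}$ and appealing to Proposition~\ref{prop:R-does-not-matter} and Lemma~\ref{lem:orbital-R-doesnt-matter} eliminates the subscripts. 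I expect no serious obstacle: the only step requiring care is the pushforward identity, which is essentially immediate from the unitary invariance of Lebesgue measure on $\M_d^\sa$.
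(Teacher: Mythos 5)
Your proof is correct and follows essentially the same approach as the paper: both rest on the inclusion of the joint microstate set in the product of individual microstate sets together with the Fubini/unitary-invariance identity $\paren{\gamma_d^{\otimes\ell}\otimes\lambda}\paren{\Phi_d^{-1}\paren{\Gamma_R(\cdots)}} = \lambda\paren{\Gamma_R(\cdots)}$, the only difference being that the paper pushes the normalized Lebesgue measure $\nu_R(M,d,\epsilon)$ into the supremum defining $\chi_{\orb,R}$ while you bound the resulting integral by the pointwise supremum $\tilde{\chi}_{\orb,R}$ and appeal to Proposition~\ref{prop:other-def-of-orbital}. For completeness you should dispatch the case where some $\chi({\bf Z}_k)=-\infty$ first via Proposition~\ref{prop:micro-subadditive}, which also ensures the normalizer $\lambda(V)$ is positive when you form your probability measure.
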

\begin{proof}
First, if $\chi({\bf Z}_k) = -\infty$ for some $k$, then the result follows from Proposition \ref{prop:micro-subadditive}.  Hence we may assume that $\chi({\bf Z}_k) > -\infty$ for all $k$ and thus ${\bf Z}_k$ has finite-dimensional approximants for all $k$ by Remark \ref{rem:fda}.  

Fix $R >  \max(\{1\} \cup \{\left\|X_{i,k}\right\| \, \mid\, 1 \leq i \leq n_k, 1 \leq k \leq \ell\} \cup \{\left\|Y_{j,k}\right\| \, \mid\, 1 \leq j \leq m_k, 1 \leq k \leq \ell\})$, $M \in \bN$, and $\epsilon  > 0$.  As ${\bf Z}_k$ has finite-dimensional approximants for all $k$, there exists an $D_0 \in \bN$ such that $\Gamma_R({\bf Z}_k; M, d, \epsilon) \neq \emptyset$ for all $d \geq D_0$ and $1 \leq k \leq \ell$.  

Define $\sigma : \prod^\ell_{k=1}\left( (\M_d^{\sa})^{n_k} \times(\M_d^{\sa})^{m_k}   \right) \to \left( \prod^\ell_{k=1} (M_d^{\sa})^{n_k}\right) \times \left( \prod^\ell_{k=1} (M_d^{\sa})^{m_k}\right)$ by
\[
\sigma\left( (({\bf A}_k, {\bf B}_k))^\ell_{k=1}   \right) = (({\bf A}_k)^\ell_{k=1}, ({\bf B}_k)^\ell_{k=1}).
\]
Since each $\Gamma_R({\bf Z}_k; M, d, \epsilon)$ is non-empty and open, we know the Lebesgue measure of $\Gamma_R({\bf Z}_k; M, d, \epsilon)$ is non-zero.  Therefore, as $\sigma$ preserves the Lebesgue measure on $\lambda_d^{\otimes n_1+ \cdots + n_\ell + m_1 + \cdots + m_\ell}$ under the natural isomorphism with the domain and codomain, we have that $\sigma\left(\prod^\ell_{k=1} \Gamma_R({\bf Z}_k; M, d, \epsilon) \right)$ has positive Lebesgue measure.  Let $\nu_R(M, d, \epsilon)$ denote the probability measure obtained by renormalizing $\lambda_d^{\otimes n_1+ \cdots + n_\ell + m_1 + \cdots + m_\ell}$ after restricting to $\sigma\left(\prod^\ell_{k=1} \Gamma_R({\bf Z}_k; M, d, \epsilon)\right)$ when $d \geq D_0$; that is
\[
	\nu_R(M, d, \epsilon) = \frac{1}{\prod^\ell_{k=1} \lambda_d^{\otimes n_k + m_k}(\Gamma_R({\bf Z}_k; M, d, \epsilon))} \left.\lambda_d^{\otimes n_1+ \cdots + n_\ell + m_1 + \cdots + m_\ell}\right|_{\sigma\left(\prod^\ell_{k=1} \Gamma_R({\bf Z}_k; M, d, \epsilon) \right)}.
\]

%

Since
\[
\Gamma_R({\bf X}_1, \ldots, {\bf X}_\ell \sqcup {\bf Y}_1, \ldots, {\bf Y}_\ell; M, d, \epsilon) \subseteq \sigma\left(\prod^\ell_{k=1} \Gamma_R({\bf Z}_k; M, d, \epsilon)\right),
\]
we have by Definition \ref{defn:orbital-bi-free} that for all  $d \geq D_0$
\begin{align*}
\chi_{\orb, R}({\bf Z}_1, \ldots, {\bf Z}_\ell; M, d, \epsilon) & \geq \chi_{\orb, R}({\bf Z}_1, \ldots, {\bf Z}_\ell; M, d, \epsilon; \nu_R(M, d, \epsilon)) \\
&= \log\left( \left(\gamma_d^{\otimes \ell} \otimes \lambda_d^{ \otimes n_1+ \cdots + n_\ell + m_1 + \cdots + m_\ell}\right) \left( \Phi_d^{-1}(\Gamma_R({\bf X}_1, \ldots, {\bf X}_\ell \sqcup {\bf Y}_1, \ldots, {\bf Y}_\ell; M, d, \epsilon) \right) \right)\\
& \qquad - \sum^\ell_{k=1} \log\left( \lambda_d^{\otimes n_k + m_k}\left(\Gamma_R({\bf Z}_k; M, d, \epsilon)   \right) \right).
\end{align*}
Thus
\begin{align*}
\log&\left( \left(\gamma_d^{\otimes \ell} \otimes \lambda_d^{ \otimes n_1+ \cdots + n_\ell + m_1 + \cdots + m_\ell}\right) \left( \Phi_d^{-1}(\Gamma_R({\bf X}_1, \ldots, {\bf X}_\ell \sqcup {\bf Y}_1, \ldots, {\bf Y}_\ell; M, d, \epsilon) \right) \right) \\
&\leq \chi_{\orb, R}({\bf Z}_1, \ldots, {\bf Z}_\ell; M, d, \epsilon) + \sum^\ell_{k=1} \log\left( \lambda_d^{\otimes n_k + m_k}\left(\Gamma_R({\bf Z}_k; M, d, \epsilon)   \right) \right)
\end{align*}
for sufficiently large $d$ for every $M \in \bN$ and $\epsilon > 0$.

For a fixed $(U_k)^\ell_{k=1} \in U(d)^\ell$, notice that the corresponding section of $\Phi_d^{-1}(\Gamma_R({\bf X}_1, \ldots, {\bf X}_\ell \sqcup {\bf Y}_1, \ldots, {\bf Y}_\ell; M, d, \epsilon))$, namely
\[
	\{ (   ({\bf A}_k)^\ell_{k=1}, ({\bf B}_k)^\ell_{k=1}) \, \mid \, \Phi_d((U_k)^\ell_{k=1}, ({\bf A}_k)^\ell_{k=1}, ({\bf B}_k)^\ell_{k=1}) \in \Gamma_R({\bf X}_1, \ldots, {\bf X}_\ell \sqcup {\bf Y}_1, \ldots, {\bf Y}_\ell; M, d, \epsilon)\},
\] 
is exactly
\[
	\Phi_d\paren{(U_k^*)_{k=1}^\ell, \Gamma_R({\bf X}_1, \ldots, {\bf X}_\ell \sqcup {\bf Y}_1, \ldots, {\bf Y}_\ell; M, d, \epsilon)}.
\]
Hence Fubini's theorem and the fact that Lebesgue measure is unitarily-invariant together imply that
\begin{align*}
&\left(\gamma_d^{\otimes \ell} \otimes \lambda_d^{ \otimes n_1+ \cdots + n_\ell + m_1 + \cdots + m_\ell}\right) \left( \Phi_d^{-1}(\Gamma_R({\bf X}_1, \ldots, {\bf X}_\ell \sqcup {\bf Y}_1, \ldots, {\bf Y}_\ell; M, d, \epsilon) \right)  \\
&= \int_{U(d)^{\ell}}  \lambda_d^{ \otimes n_1+ \cdots + n_\ell + m_1 + \cdots + m_\ell}
\paren{
	\Phi_d\paren{(U_k^*)_{k=1}^\ell, \Gamma_R({\bf X}_1, \ldots, {\bf X}_\ell \sqcup {\bf Y}_1, \ldots, {\bf Y}_\ell; M, d, \epsilon)}
}  d \gamma_d^{\otimes \ell} \\
&=\int_{U(d)^{\ell}}  \lambda_d^{ \otimes n_1+ \cdots + n_\ell + m_1 + \cdots + m_\ell}\left(\Gamma_R({\bf X}_1, \ldots, {\bf X}_\ell \sqcup {\bf Y}_1, \ldots, {\bf Y}_\ell; M, d, \epsilon)  \right)  d \gamma_d^{\otimes \ell} \\
&= \lambda_d^{ \otimes n_1+ \cdots + n_\ell + m_1 + \cdots + m_\ell}\left(\Gamma_R({\bf X}_1, \ldots, {\bf X}_\ell \sqcup {\bf Y}_1, \ldots, {\bf Y}_\ell; M, d, \epsilon)  \right).
\end{align*}
Hence
\begin{align*}
\log&\left( \lambda_d^{ \otimes n_1+ \cdots + n_\ell + m_1 + \cdots + m_\ell}\left(\Gamma_R({\bf X}_1, \ldots, {\bf X}_\ell \sqcup {\bf Y}_1, \ldots, {\bf Y}_\ell; M, d, \epsilon)  \right)  \right) \\
&\leq \chi_{\orb, R}({\bf Z}_1, \ldots, {\bf Z}_\ell; M, d, \epsilon) + \sum^\ell_{k=1} \log\left( \lambda_d^{\otimes n_k + m_k}\left(\Gamma_R({\bf Z}_k; M, d, \epsilon)   \right) \right)
\end{align*}
so
\begin{align*}
\frac{1}{d^2}\log&\left( \lambda_d^{ \otimes n_1+ \cdots + n_\ell + m_1 + \cdots + m_\ell}\left(\Gamma_R({\bf X}_1, \ldots, {\bf X}_\ell \sqcup {\bf Y}_1, \ldots, {\bf Y}_\ell; M, d, \epsilon)  \right)  \right) + \frac{1}{2}\left( \sum^\ell_{k=1} n_k + m_k\right) \log(d) \\
&\leq \frac{1}{d^2}\chi_{\orb, R}({\bf Z}_1, \ldots, {\bf Z}_\ell; M, d, \epsilon) + \sum^\ell_{k=1} \frac{1}{d^2}\log\left( \lambda_d^{\otimes n_k + m_k}\left(\Gamma_R({\bf Z}_k; M, d, \epsilon)   \right) \right) + \frac{1}{2} (n_k + m_k) \log(d).
\end{align*}
Now, taking the appropriate limits, the result follows.
\end{proof}

We note that inequality in Theorem \ref{thm:orbital-subadditive-with-micro} need not be an equality.  Indeed \cite{U2017} shows that the inequality can be strict in the free setting.

\section{A Characterization of Bi-Freeness}
\label{sec:Characterization}

The goal of this section is to develop another characterization of bi-freeness for specific tracially bi-partite systems.  To be specific, using the same notation as Section \ref{sec:Orbital}, the main goal of this section is to prove the following.

\begin{thm}
\label{thm:orbital-bi-free-characterization}
Let ${\bf Z}_1$, ${\bf Z}_2$, $\ldots$, ${\bf Z}_\ell$ be such that
\[
\left(  \bigcup^\ell_{k=1} \{X_{k,i}\}^{n_k}_{i=1},   \bigcup^\ell_{k=1} \{Y_{k,j}\}^{m_k}_{j=1}  \right)
\]
is a tracially bi-partite system.  Then ${\bf Z}_1$, ${\bf Z}_2$, $\ldots$, ${\bf Z}_\ell$ are bi-free and individually have finite-dimensional approximants if and only if $\chi_{\orb}({\bf Z}_1, \ldots, {\bf Z}_\ell) = 0$.
\end{thm}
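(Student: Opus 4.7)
The forward direction is precisely Corollary \ref{cor:orbital-bi-free-zero}, so the plan focuses on the converse. Assume $\chi_{\orb}({\bf Z}_1, \ldots, {\bf Z}_\ell) = 0$. The claim that each ${\bf Z}_k$ has finite-dimensional approximants follows immediately from Proposition \ref{prop:orbital-base-properties}(1): having $\chi_{\orb} > -\infty$ forces the joint system ${\bf X}_1, \ldots, {\bf X}_\ell \sqcup {\bf Y}_1, \ldots, {\bf Y}_\ell$ to have finite-dimensional approximants, from which one extracts approximants for each individual ${\bf Z}_k$ by coordinate projection. The substance of the argument is therefore the bi-freeness of the ${\bf Z}_k$'s.

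The strategy is to produce, for each $M \in \bN$ and $\epsilon > 0$, matrix approximants that are simultaneously approximate microstates of the whole system and approximately freely independent across the index $k$ in $(\M_d, \tau_d)$. Fix $M$, $\epsilon$, and $R$ larger than all relevant operator norms. By Proposition \ref{prop:other-def-of-orbital} and the hypothesis, there is a subsequence $(d_l)$ and microstates $({\bf A}_{k,l}, {\bf B}_{k,l}) \in \Gamma_R({\bf Z}_k; M, d_l, \epsilon)$ for each $k$ such that
\[
	s_l := \gamma_{d_l}^{\otimes \ell}\paren{\Gamma_{\orb}({\bf Z}_1, \ldots, {\bf Z}_\ell : ({\bf A}_{k,l})_{k=1}^\ell, ({\bf B}_{k,l})_{k=1}^\ell; M, d_l, \epsilon)} > 0 \quad\text{with}\quad d_l^{-2}\log s_l \to 0.
\]
Next I would apply Lemma \ref{lem:voi-lem-about-lots-of-matrices-making-asymptotic-freeness} to the families $\C_k = \{A_{k,i,l}, B_{k,j,l}\}$ to locate unitaries $(U_k^{(l)})$ belonging simultaneously to the orbital set above and to the set on which the conjugated tuples $(U_k^{(l)*} {\bf A}_{k,l} U_k^{(l)}, U_k^{(l)*} {\bf B}_{k,l} U_k^{(l)})_{k=1}^\ell$ are $(M, \epsilon)$-free in $(\M_{d_l}, \tau_{d_l})$. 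For such $(U_k^{(l)})$, the microstate property of $\Gamma_{\orb}$ yields that the moments of $\varphi$ are $\epsilon$-close to the $\tau_{d_l}$-moments of the conjugated family; on the other hand, their asymptotic freeness forces those same $\tau_{d_l}$-moments to agree, up to a function of $\epsilon$ depending on $M$ and $R$, with the corresponding tracial free-product moments of the individual distributions of ${\bf Z}_k$. Via the ``right-reversed'' tracial identification underlying Theorem \ref{thm:micro-converting-rights-to-lefts} and Remark \ref{rem:fda}, such a tracial free product is exactly the bi-free product of the ${\bf Z}_k$'s. Letting $\epsilon \to 0$ and $M \to \infty$ would then conclude that the joint distribution of $({\bf Z}_k)_{k=1}^\ell$ equals the bi-free product of the marginals, proving bi-freeness.

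The principal obstacle is the simultaneous choice of $(U_k^{(l)})$ in the above. Lemma \ref{lem:voi-lem-about-lots-of-matrices-making-asymptotic-freeness} only guarantees that the asymptotic-freeness set has measure exceeding $1-\theta$ for $d \geq N(\theta)$, while the orbital set has the potentially tiny measure $s_l$. A nonempty intersection requires $s_l > \theta_l$; the natural choice $\theta_l = s_l/2$ demands passing to a subsequence along which $d_l \geq N(\theta_l)$, which is feasible because the concentration estimates on $U(d)$ underlying Lemma \ref{lem:voi-lem-about-lots-of-matrices-making-asymptotic-freeness} provide decay of the bad set at an exponential-in-$d^2$ rate, whereas by hypothesis $s_l$ decays strictly sub-exponentially in $d_l^2$. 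I expect this rate-matching step to be the most delicate part of the proof, though the strong concentration of Haar measure on $U(d)$ should make it go through.
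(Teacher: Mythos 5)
Your forward direction agrees with the paper, and the structure of your converse is plausible, but it takes a genuinely different route: the paper proves the converse by constructing a bi-free analogue of the Biane--Voiculescu Wasserstein semimetric $W_2$ (Definitions~\ref{defn:Wasserstein}--\ref{defn:Wasserstein-states}), showing that the appropriate pushforward states $\widehat{\mu_l}$ and $\widehat{\gamma_{d_l,s}^{\otimes\ell}}$ converge weak$^*$ to $\varphi_{{\bf Z}_1,\ldots,{\bf Z}_\ell}$ and $\varphi^{\text{bi-free}}_{{\bf Z}_1,\ldots,{\bf Z}_\ell}$ respectively (Lemmas~\ref{lem:weak-limit-1},~\ref{lem:weak-limit-2}), and then invoking the Otto--Villani transportation-cost inequality on $SU(d)^\ell$ to obtain the Talagrand-type bound
$W_2(\varphi_{{\bf Z}}, \varphi^{\text{bi-free}}_{{\bf Z}}) \leq 4R\sqrt{n+m}\sqrt{-\chi_\orb}$
(Proposition~\ref{prop:Talagrand}), from which $\chi_\orb = 0$ forces the two states to coincide. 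This is the approach of Hiai--Miyamoto--Ueda transplanted to the bi-free setting. Your proposal instead tries to directly intersect the orbital microstate set with the asymptotic-freeness set via a rate comparison.

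The gap you yourself flag is real and is where the two approaches diverge in substance. Lemma~\ref{lem:voi-lem-about-lots-of-matrices-making-asymptotic-freeness}, as quoted from \cite{V1998-1}, provides only a qualitative statement: for each $\theta$ there is $N(\theta)$ so that the good set has measure $> 1-\theta$ once $d \geq N(\theta)$. To run your rate-matching you need the \emph{quantitative} form --- that for fixed $M,\epsilon,R$ the bad set has measure at most $C(M,\epsilon,R)e^{-c(M,\epsilon,R)d^2}$ uniformly over the matrix families --- and the paper never states this. It is true (it can be extracted from the concentration-of-measure input in Voiculescu's proof, with the correct Lipschitz-constant scaling giving an $e^{-cd^2}$ rate), and once you have it the comparison against $s_l = e^{-o(d_l^2)}$ does indeed produce a nonempty intersection for large $l$. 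But as written your argument cites Lemma~\ref{lem:voi-lem-about-lots-of-matrices-making-asymptotic-freeness} for something it does not say, so you would need to prove or cite the quantitative concentration separately. The paper's route neatly sidesteps exactly this: the Otto--Villani inequality encodes the concentration phenomenon in a way that only requires the relative entropy $-\log s_l$ normalized by $d_l^2$ to vanish, with no explicit rate constant, at the price of building the $W_2$ machinery. One smaller point you should make explicit: when you invoke freeness to compare $\tau_{d_l}$-moments to the tracial free product, you need that the only $\tau_{d_l}$-moments of the individual $({\bf A}_k, {\bf B}_k)$ entering the free-product/moment-cumulant expansion are of the $A$-then-$B$ form (which are the ones controlled by $\Gamma_R({\bf Z}_k; M, d, \epsilon)$); this holds because in the words in question the left letters all precede the right letters, so every order-preserving monochromatic subproduct is again $A$-then-$B$, but this deserves a sentence.
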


Of course, the only if direction immediately follows from Corollary \ref{cor:orbital-bi-free-zero}.  Thus it remains to prove under these assumptions that $\chi_{\orb}({\bf Z}_1, \ldots, {\bf Z}_\ell) = 0$ implies bi-freeness (as it clearly implies finite-dimensional approximants).  Before we get to that, we immediately have the following by combining Theorem \ref{thm:orbital-bi-free-characterization} with the results of Section \ref{sec:Orbital}.

\begin{cor}
\label{cor:additive-implies-bi-free}
Let ${\bf Z}_1$, ${\bf Z}_2$, $\ldots$, ${\bf Z}_\ell$ be such that $\chi({\bf Z}_k) > -\infty$ for all $1 \leq k \leq \ell$.  Suppose further that
\[
\left(  \bigcup^\ell_{k=1} \{X_{k,i}\}^{n_k}_{i=1},   \bigcup^\ell_{k=1} \{Y_{k,j}\}^{m_k}_{j=1}  \right)
\]
is a tracially bi-partite system.   If
\[
\chi({\bf X}_1, \ldots, {\bf X}_\ell \sqcup {\bf Y}_1, \ldots, {\bf Y}_\ell) = \sum^\ell_{k=1} \chi({\bf Z}_k),
\]
then ${\bf Z}_1$, ${\bf Z}_2$, $\ldots$, ${\bf Z}_\ell$ are bi-free.
\end{cor}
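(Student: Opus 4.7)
The plan is to deduce this corollary as an essentially immediate consequence of three earlier results: Remark \ref{rem:fda}, Theorem \ref{thm:orbital-subadditive-with-micro}, and Theorem \ref{thm:orbital-bi-free-characterization}. The strategy is to show that the hypothesised additivity of microstate bi-free entropy forces the orbital bi-free entropy to attain its maximum value, which in turn characterizes bi-freeness.

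First, I would invoke Remark \ref{rem:fda} to note that the assumption $\chi({\bf Z}_k) > -\infty$ guarantees that each ${\bf Z}_k$ has finite-dimensional approximants; this is exactly the technical hypothesis needed later to apply Theorem \ref{thm:orbital-bi-free-characterization}.

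Next, I would apply Theorem \ref{thm:orbital-subadditive-with-micro}, which says
\[
\chi({\bf X}_1, \ldots, {\bf X}_\ell \sqcup {\bf Y}_1, \ldots, {\bf Y}_\ell) \leq \chi_{\orb}({\bf Z}_1, \ldots, {\bf Z}_\ell) + \sum_{k=1}^{\ell} \chi({\bf Z}_k).
\]
Combining this with the equality assumed in the hypothesis and rearranging gives $\chi_{\orb}({\bf Z}_1, \ldots, {\bf Z}_\ell) \geq 0$ (the rearrangement is unambiguous since every $\chi({\bf Z}_k)$ is finite, so cancellation is valid). Since orbital bi-free entropy is always at most $0$ by Definition \ref{defn:orbital-bi-free}, we conclude $\chi_{\orb}({\bf Z}_1, \ldots, {\bf Z}_\ell) = 0$.

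Finally, I would apply Theorem \ref{thm:orbital-bi-free-characterization}, whose hypothesis of tracially bi-partite structure is already assumed and whose hypothesis of individual finite-dimensional approximants was secured in the first step: the vanishing of $\chi_{\orb}$ then yields bi-freeness of ${\bf Z}_1, \ldots, {\bf Z}_\ell$. There is no real obstacle here, as the corollary is genuinely a corollary — essentially a three-line deduction. The substantive work is already done in Theorem \ref{thm:orbital-subadditive-with-micro} (which relates $\chi$ and $\chi_{\orb}$) and Theorem \ref{thm:orbital-bi-free-characterization} (the characterization of bi-freeness via $\chi_{\orb}$); this corollary just chains them together.
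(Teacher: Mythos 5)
Your proof is correct and follows exactly the paper's argument: secure finite-dimensional approximants from Remark~\ref{rem:fda}, use Theorem~\ref{thm:orbital-subadditive-with-micro} together with the additivity hypothesis (and finiteness of each $\chi({\bf Z}_k)$ to justify cancellation) to force $\chi_{\orb}({\bf Z}_1, \ldots, {\bf Z}_\ell) = 0$, and then invoke Theorem~\ref{thm:orbital-bi-free-characterization}. No issues.
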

\begin{proof}
As $\chi({\bf Z}_k) > -\infty$ for all $1 \leq k \leq \ell$, we know from Remark \ref{rem:fda} that ${\bf Z}_1$, ${\bf Z}_2$, $\ldots$, ${\bf Z}_\ell$ individually have finite-dimensional approximants.  Furthermore, the assumption of additivity of the microstate bi-free entropy implies that 
\[
\chi({\bf X}_1, \ldots, {\bf X}_\ell \sqcup {\bf Y}_1, \ldots, {\bf Y}_\ell) > -\infty.
\]

By Theorem \ref{thm:orbital-subadditive-with-micro} along with the assumption, we know that
\begin{align*}
\chi({\bf X}_1, \ldots, {\bf X}_\ell \sqcup {\bf Y}_1, \ldots, {\bf Y}_\ell) & \leq \chi_{\orb}({\bf Z}_1, \ldots, {\bf Z}_\ell) + \sum^\ell_{k=1} \chi({\bf Z}_k)\\
&= \chi_{\orb}({\bf Z}_1, \ldots, {\bf Z}_\ell) +\chi({\bf X}_1, \ldots, {\bf X}_\ell \sqcup {\bf Y}_1, \ldots, {\bf Y}_\ell).
\end{align*}
Thus $\chi_{\orb}({\bf Z}_1, \ldots, {\bf Z}_\ell) \geq 0$.  However, as $\chi_{\orb}({\bf Z}_1, \ldots, {\bf Z}_\ell) \leq 0$ by definition, we obtain that $\chi_{\orb}({\bf Z}_1, \ldots, {\bf Z}_\ell) = 0$.  Hence Theorem \ref{thm:orbital-bi-free-characterization} implies that ${\bf Z}_1$, ${\bf Z}_2$, $\ldots$, ${\bf Z}_\ell$ are bi-free.
\end{proof}

To begin the proof of Theorem \ref{thm:orbital-bi-free-characterization}, we first need an analogue of the free Wasserstein metric from \cite{BV2001} for the following objects.

\begin{defn}
A quadruple $(\A, \mathcal{L}, \mathcal{R}, \varphi)$ is said to be a \emph{left-right, tracially bi-partite, C$^*$-non-commutative probability space} if $(\A, \varphi)$ is a C$^*$-non-commutative probability space, $\mathcal{L}$ and $\mathcal{R}$ are unital C$^*$-subalgebras of $\A$ that commute with one another, and $\varphi$ is tracial when restricted to $\mathcal{L}$ and when restricted to $\mathcal{R}$.

By saying a tracially bi-partite system $\left(\{X_i\}^n_{i=1}, \{Y_j\}^m_{j=1}\right)$
is in a left-right, tracially bi-partite, C$^*$-non-commutative probability space $(\A, \mathcal{L}, \mathcal{R}, \varphi)$, we mean $\{X_i\}^n_{i=1} \subseteq \mathcal{L}$ and $\{Y_j\}^m_{j=1} \subseteq \mathcal{R}$.
Note any tracially bi-partite system can be realized in a left-right, tracially bi-partite, C$^*$-non-commutative probability space.
\end{defn}

\begin{defn}
\label{defn:Wasserstein}
Let $\left(\{X_{i,1}\}^n_{i=1}, \{Y_{j,1}\}^m_{j=1}\right)$ and $\left(\{X_{i,2}\}^n_{i=1}, \{Y_{j,2}\}^m_{j=1}\right)$ be tracially bi-partite systems in left-right, tracially bi-partite, C$^*$-non-commutative probability spaces $(\A_1, \mathcal{L}_1, \mathcal{R}_1, \varphi_1)$ and $(\A_2, \mathcal{L}_2, \mathcal{R}_2, \varphi_2)$ respectively.  We define
\[
W_2\left(\left(\{X_{i,1}\}^n_{i=1}, \{Y_{j,1}\}^m_{j=1}\right), \left(\{X_{i,2}\}^n_{i=1}, \{Y_{j,2}\}^m_{j=1}\right)    \right)
\]
to be infimum of
\[
\left( \sum^n_{i=1} \left\| X'_{i,1} - X'_{i,2}\right\|_2^2 + \sum^m_{j=1} \left\|Y'_{j,1} - Y'_{j,2}\right\|_2^2\right)^\frac{1}{2}
\]
over all tracially bi-partite systems $\left(\{X'_{i,1}\}^n_{i=1}, \{Y'_{j,1}\}^m_{j=1}\right)$ and $\left(\{X'_{i,2}\}^n_{i=1}, \{Y'_{j,2}\}^m_{j=1}\right)$ in a left-right, tracially bi-partite, C$^*$-non-commutative probability spaces $(\A, \mathcal{L}, \mathcal{R}, \varphi)$ such that $\left(\{X_{i,k}\}^n_{i=1}, \{Y_{j,k}\}^m_{j=1}\right)$ and $\left(\{X'_{i,k}\}^n_{i=1}, \{Y'_{j,k}\}^m_{j=1}\right)$ have the same $*$-distributions and individual operator norms for $k=1,2$, where $\left\| \, \cdot \, \right\|_2$ denotes the $2$-seminorm with respect to $\varphi$ (note we may only have a seminorm as we are not restricting ourselves to faithful states).
\end{defn}

\begin{rem}
\label{rem:Wasserstein-bi-free-product}
It is natural and necessary to ask whether one can find a left-right, tracially bi-partite, C$^*$-non-commutative probability spaces $(\A, \mathcal{L}, \mathcal{R}, \varphi)$ as described in Definition \ref{defn:Wasserstein} so that the infimum is over a non-empty set.  This is indeed the case by considering reduced free products.  If one takes the reduced free product Hilbert space $(\A_1, \varphi_1) \ast (\A_2, \varphi_2)$, we can let $\mathcal{L}_1$ and $\mathcal{L}_2$ act via the left regular representation on $\A_1$ and $\A_2$ respectively, and let $\mathcal{R}_1$ and $\mathcal{R}_2$ act via the right regular representation on $\A_1$ and $\A_2$ respectively.  These representations are $\varphi$-preserving $*$-homomorphism and thus preserve distributions and the operator norms.  Furthermore, the C$^*$-algebra $\mathcal{L}$ generated by the images of $\mathcal{L}_1$ and $\mathcal{L}_2$ commutes with the  C$^*$-algebra $\mathcal{R}$ generated by the images of $\mathcal{R}_1$ and $\mathcal{R}_2$.  Finally, the reduced free product state is tracial on $\mathcal{L}$ and is tracial on $\mathcal{R}$ by properties of the reduced free product (i.e. the free case).  Of course, this is one reason why the states in a left-right, tracially bi-partite, C$^*$-non-commutative probability space need not be faithful as the work of \cite{R2017} shows we would be greatly restricting the systems we can study in that the bi-free product of faithful states need not be faithful.
\end{rem}

Using Definition \ref{defn:Wasserstein}, we can consider a similar definition for `nice' states.

\begin{defn}
\label{defn:Wasserstein-states}
Let $\A$ be a C$^*$-algebra and let $\mathcal{L}$ and $\mathcal{R}$ be unital subalgebras of $\A$ that commute with one another.
Suppose that $\A$ is generated by $\mathcal{L}$ and $\mathcal{R}$, which in turn are generated by prescribed sets $\set{X_i}_{i=1}^n$ and $\set{Y_j}_{j=1}^m$ respectively.

Let $\mathcal{CS}(\A, \mathcal{L}, \mathcal{R})$ denote the set of all states (positive unital linear functionals of norm one) that are tracial when restricted to $\mathcal{L}$ and are tracial when restricted to $\mathcal{R}$.
We define
\[
W_2(\varphi_1, \varphi_2) = W_2\left(\left(\{X_{i,1}\}^n_{i=1}, \{Y_{j,1}\}^m_{j=1}\right), \left(\{X_{i,2}\}^n_{i=1}, \{Y_{j,2}\}^m_{j=1}\right)    \right)
\]
where $\left(\{X_{i,k}\}^n_{i=1}, \{Y_{j,k}\}^m_{j=1}\right)$ denote $\left(\{X_{i}\}^n_{i=1}, \{Y_{j}\}^m_{j=1}\right)$ in $(\A, \mathcal{L}, \mathcal{R}, \varphi_k)$ for $k=1,2$.
Note that $W_2$ depends on the choice of generating set, but we leave this implicit.
\end{defn}

Like the free Wasserstein metric from \cite{BV2001}, the function $W_2$ has some nice properties.

\begin{prop}
\label{prop:Wasserstein-0-equal}
The bi-free analogue of the Wasserstein metric is a semimetric on the collection of tracially bi-partite systems with equal numbers of left variables and equal numbers of right variables, and is a semimetric $\mathcal{CS}(\A, \mathcal{L}, \mathcal{R})$.  
\end{prop}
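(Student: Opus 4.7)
The plan is to verify three properties: non-negativity, symmetry, and the triangle inequality. Non-negativity is immediate from the definition. Symmetry also comes for free, since $W_2$ is defined via a sum that is invariant under interchanging the two systems. Reflexivity (i.e. $W_2 = 0$ when the two systems are identical in $*$-distribution) is witnessed by taking the coupling in which one embeds each system into itself inside its own ambient left-right tracially bi-partite $\rC^*$-non-commutative probability space. The bulk of the proof is therefore the triangle inequality.

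For the triangle inequality, let $\left(\{X^{(k)}_i\}, \{Y^{(k)}_j\}\right)$, $k=1,2,3$, be three tracially bi-partite systems, and fix $\ep > 0$. Choose near-optimal couplings: a left-right tracially bi-partite space $(\A_{12}, \L_{12}, \mathcal{R}_{12}, \varphi_{12})$ carrying systems with the right $*$-distributions and norms for indices $1$ and $2$, within $\ep$ of realizing $W_2(1,2)$, and similarly $(\A_{23}, \L_{23}, \mathcal{R}_{23}, \varphi_{23})$ for the pair $(2,3)$. Call the two copies of the middle system $B'$ (in $\A_{12}$) and $B''$ (in $\A_{23}$); by construction, these generate identified left and right algebras with the same tracial $*$-distribution. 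The task is to glue these couplings along the middle system.

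To perform the gluing, I would construct an amalgamated reduced free product Hilbert space $\mathcal{H} = L^2(\A_{12}, \varphi_{12}) *_{\mathcal{N}} L^2(\A_{23}, \varphi_{23})$, where $\mathcal{N}$ is the Hilbert module over the middle system identified via the isomorphism of $B'$ and $B''$. The left copies of $\L_{12}, \L_{23}$ act by left multiplication, the right copies of $\mathcal{R}_{12}, \mathcal{R}_{23}$ act by right multiplication; since the middle system is contained in both, the amalgamation identifies the two middle actions. Let $\L$ be the $\rC^*$-algebra generated by the images of $\L_{12}$ and $\L_{23}$, and similarly $\mathcal{R}$; take $\varphi$ to be the vector state coming from the amalgamated vacuum. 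The key verifications are: (i) $\L$ and $\mathcal{R}$ commute, since this holds separately for each factor and the amalgamation preserves commutation; (ii) $\varphi$ restricts to a trace on $\L$ and on $\mathcal{R}$, which follows from the standard trace property of amalgamated free products together with the traciality hypothesis on each factor; (iii) the embeddings of the three systems into $\L \cup \mathcal{R}$ preserve $*$-distributions and norms. In this common space, Minkowski's inequality in the $L^2$ seminorm induced by $\varphi$, followed by Minkowski in $\ell^2$ over the indices $i$ and $j$, yields
\[
W_2(1,3) \leq \paren{\sum_i \norm{X^{(1)}_i - X^{(3)}_i}_2^2 + \sum_j \norm{Y^{(1)}_j - Y^{(3)}_j}_2^2}^{1/2} \leq W_2(1,2) + W_2(2,3) + O(\ep),
\]
and letting $\ep \to 0$ gives the triangle inequality. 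The second assertion, that $W_2$ is a semimetric on $\mathcal{CS}(\A, \L, \mathcal{R})$, follows immediately by applying the above to the systems $(\{X_i\}, \{Y_j\})$ viewed in $(\A, \L, \mathcal{R})$ under the various states $\varphi_k$.

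The main obstacle is step (ii)--(iii) of the gluing: producing an amalgamated construction that simultaneously respects the commutation of left and right algebras and remains tracial on each side. In the purely free setting of \cite{BV2001} one uses an ordinary amalgamated free product of tracial von Neumann algebras, but here the bi-partite structure must be preserved by the amalgamation. A safer implementation may be to amalgamate the left pieces and the right pieces separately over the left and right parts of the middle system respectively (which are classical, since the middle system is tracially bi-partite), and then take the appropriate commuting-square representation on the tensor-product-type Hilbert space; existence of such a construction is essentially Remark \ref{rem:Wasserstein-bi-free-product} upgraded to the amalgamated case, and the traciality checks reduce to the standard fact that amalgamated reduced free products of tracial states are tracial.
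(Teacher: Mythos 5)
Your proof takes a fundamentally different — and incorrect — interpretation of what needs to be shown, and the main body of your argument attempts something the paper explicitly states is open. In this paper's usage, a \emph{semimetric} is a distance function that is nonnegative, symmetric, and separates points (i.e.\ $d(x,y)=0$ iff $x=y$), but need \emph{not} satisfy the triangle inequality. The remark immediately following Proposition~\ref{prop:Wasserstein-0-equal} states explicitly, ``Unfortunately we do not know whether or not $W_2$ is a metric,'' and explains precisely why: there is no bi-free amalgamated-product construction that simultaneously amalgamates the left operators over one subalgebra and the right operators over a non-isomorphic subalgebra, and in attempts to build such a space ``positivity and lack of traciality become issues.'' Your ``gluing along the middle system via an amalgamated reduced free product'' is exactly this missing construction. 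Your points (ii) and (iii), and the fallback you suggest in your last paragraph, are where the argument breaks — these do not reduce to the tracial amalgamated-free-product case because $\varphi$ is not tracial on the full algebra, and the analogue of the reduced amalgamated free product respecting the left-right commuting-square structure is not known to exist (indeed \cite{CNS2015-1} only amalgamates over a copy sitting in both faces). You correctly flag this as ``the main obstacle'' but do not overcome it, and the paper asserts it cannot be overcome with current tools.

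Meanwhile, the genuinely non-trivial part of the paper's proof is the one you dismiss under ``reflexivity.'' You only check the easy direction (identical $*$-distributions implies $W_2=0$). What the paper actually proves is the converse: $W_2 = 0$ implies the distributions (resp.\ states) agree. This requires a careful telescoping/Cauchy--Schwarz estimate in the seminormed setting, crucially using left-right commutation to pass factors through the $L^2$ inner product and traciality on each individual face to cycle them; see the explicit estimate of $|\varphi(X'_{1,1}X'_{2,1}Y'_{1,1}) - \varphi(X'_{1,2}X'_{2,1}Y'_{1,1})|$ in the paper's proof, which is the prototype of the induction. You should supply that direction and drop the triangle inequality entirely.
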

\begin{proof}
The reasons that
\[
W_2\left(\left(\{X_{i,1}\}^n_{i=1}, \{Y_{j,1}\}^m_{j=1}\right), \left(\{X_{i,2}\}^n_{i=1}, \{Y_{j,2}\}^m_{j=1}\right)    \right) = 0
\]
implies $\left(\{X_{i,1}\}^n_{i=1}, \{Y_{j,1}\}^m_{j=1}\right)$ and $\left(\{X_{i,2}\}^n_{i=1}, \{Y_{j,2}\}^m_{j=1}\right)$ have the same distribution and the reasons that $W_2(\varphi_1, \varphi_2) = 0$ implies $\varphi_1 = \varphi_2$ both follow from the facts that the operator norms of the representations in Definition \ref{defn:Wasserstein} are bounded, the left and right algebras commute with each other, all linear functionals considered are states, the traciality of the states on the individual left and right algebras, and the definition of $W_2$.
Indeed, for an example computation, with terms as in Definition \ref{defn:Wasserstein} (where all operator are self-adjoint),
notice that
\begin{align*}
\left|\varphi(X'_{1,1} X'_{2,1} Y'_{1,1}) - \varphi(X'_{1,2} X'_{2,1} Y'_{1,1})\right| &\leq \varphi(1) \varphi\left(Y'_{1,1}X'_{2,1} (X'_{1,1} - X'_{1,2})(X'_{1,1} - X'_{1,2})X'_{2,1} Y'_{1,1}\right)^\frac{1}{2}\\
&= \varphi\left(X'_{2,1} (X'_{1,1} - X'_{1,2})Y'_{1,1}Y'_{1,1}(X'_{1,1} - X'_{1,2})X'_{2,1}     \right)^\frac{1}{2}\\
& \leq \left\|Y'_{1,1}\right\| \varphi\left(X'_{2,1} (X'_{1,1} - X'_{1,2}) (X'_{1,1} - X'_{1,2})X'_{2,1}     \right)^\frac{1}{2}\\
& = \left\|Y'_{1,1}\right\| \varphi\left( (X'_{1,1} - X'_{1,2})X'_{2,1}X'_{2,1} (X'_{1,1} - X'_{1,2})     \right)^\frac{1}{2}\\
&\leq  \left\|Y'_{1,1}\right\|\left\|X'_{2,1}\right\| \left\|X'_{1,1} - X'_{1,2}\right\|_2
\end{align*}
where the first equality is left-right commutation, and the second equalityis traciality on the left.  Using telescoping sums along with the bounds on the operator norms, the fact that $W_2$ is 0 and thus we can find $\left(\{X'_{i,1}\}^n_{i=1}, \{Y'_{j,1}\}^m_{j=1}\right)$ and $\left(\{X'_{i,2}\}^n_{i=1}, \{Y'_{j,2}\}^m_{j=1}\right)$ as in Definition \ref{defn:Wasserstein} with arbitrarily small 2-seminorms, we can show the difference in the distribution of any monomial is as small as we desire and thus equal.  The remaining properties of a semimetric are trivial to verify.
\end{proof}

\begin{rem}
Unfortunately we do not know whether or not $W_2$ is a metric.  The problem with trying to repeat the proof of \cite{BV2001} is that there is no current bi-free product that enables one to amalgamate the left operators over one subalgebra and the right operators over another non-isomorphic subalgebra; that is, \cite{CNS2015-1} amalgamates over a copy of an algebra contained in both the left and right operators.  This creates a problem with trying to use the bi-free product construction from Remark \ref{rem:Wasserstein-bi-free-product} to take two different pairs and construct a left-right, tracially bi-partite, C$^*$-non-commutative probability space containing all three in a way that the both pairs are identified in the appropriate way.  In particular, positivity and lack of traciality become issues.

It would also be nice to generalize the above to non-bi-partite systems.  However, as we are dealing with seminorms, it does appear difficult to even get a semimetric considering the current proof of Proposition \ref{prop:Wasserstein-0-equal}.
\end{rem}

Fortunate for the discussions in this paper, Proposition \ref{prop:Wasserstein-0-equal} along with the following result are enough.

\begin{prop}
\label{prop:Wass-lower-semicontinuous}
Given sequences $(\varphi_{1,k})_{k\geq 1}$ and $(\varphi_{2,k})_{k\geq 1}$ in $\mathcal{CS}(\A, \mathcal{L}, \mathcal{R})$ that converge weak$^*$ to $\varphi_1$ and $\varphi_2$ in $\mathcal{CS}(\A, \mathcal{L}, \mathcal{R})$ respectively, we have
\[
\liminf_{k \to \infty} W_2(\varphi_{1,k}, \varphi_{2,k}) \geq W_2(\varphi_1, \varphi_2).
\]

Similarly, suppose $\left(\left(\{X_{i,1, k}\}^n_{i=1}, \{Y_{j,1, k}\}^m_{j=1}\right)\right)_{k\geq 1}$ and $\left(\left(\{X_{i,2,k}\}^n_{i=1}, \{Y_{j,2,k}\}^m_{j=1}\right)\right)_{k\geq 1}$ are tracially bi-partite systems in left-right, tracially bi-partite, C$^*$-non-commutative probability spaces $(\A_1, \mathcal{L}_1, \mathcal{R}_1, \varphi_1)$ and $(\A_2, \mathcal{L}_2, \mathcal{R}_2, \varphi_2)$ respectively that converge in distributions to $\left(\{X_{i,1}\}^n_{i=1}, \{Y_{j,1}\}^m_{j=1}\right)$ and $\left(\{X_{i,2}\}^n_{i=1}, \{Y_{j,2}\}^m_{j=1}\right)$ in $(\A_1, \mathcal{L}_1, \mathcal{R}_1, \varphi_1)$ and $(\A_2, \mathcal{L}_2, \mathcal{R}_2, \varphi_2)$ respectively and for which there is a uniform bound on all operator norms of all operators.  Then
\begin{align*}
\liminf_{k \to \infty} & W_2\left(\left(\{X_{i,1,k}\}^n_{i=1}, \{Y_{j,1,k}\}^m_{j=1}\right), \left(\{X_{i,2,k}\}^n_{i=1}, \{Y_{j,2,k}\}^m_{j=1}\right)    \right) \\
&\geq W_2\left(\left(\{X_{i,1}\}^n_{i=1}, \{Y_{j,1}\}^m_{j=1}\right), \left(\{X_{i,2}\}^n_{i=1}, \{Y_{j,2}\}^m_{j=1}\right)    \right).
\end{align*}
\end{prop}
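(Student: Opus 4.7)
The plan is a compactness-and-continuity argument: realize all couplings as states on a fixed universal C*-algebra, extract a weak*-limit of near-optimal couplings, and exploit the fact that the squared $W_2$ cost is a polynomial in the generators and hence weak*-continuous.

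Let $R > 0$ be a uniform upper bound on all operator norms in play (in the state version, $R$ depends only on $(\A, \mathcal{L}, \mathcal{R})$; in the distributional version, $R$ is supplied by hypothesis). I would introduce the universal unital C*-algebra $\mathcal{B}$ generated by self-adjoints $X_i^{(s)}$ and $Y_j^{(s)}$ for $1 \leq i \leq n$, $1 \leq j \leq m$, $s \in \{1,2\}$, each of norm at most $R$, and subject to the commutation relations $[X_i^{(s)}, Y_j^{(t)}] = 0$ for all $i, j, s, t$. Let $\tilde{\mathcal{L}}, \tilde{\mathcal{R}}$ denote the subalgebras generated by the $X_i^{(s)}$ and by the $Y_j^{(s)}$ respectively; they commute by construction. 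Every admissible coupling in Definition~\ref{defn:Wasserstein}, realized inside some left-right tracially bi-partite space $(\A', \mathcal{L}', \mathcal{R}', \varphi')$, yields via the universal property a state $\psi$ on $\mathcal{B}$ that is tracial on $\tilde{\mathcal{L}}$ and on $\tilde{\mathcal{R}}$, and whose restriction to the $s$-th copy of generators matches the marginal of the $s$-th system.

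I would then pass to a subsequence realizing $L := \liminf_{k \to \infty} W_2(\varphi_{1,k}, \varphi_{2,k})$ (assuming $L < \infty$, else there is nothing to show), and select for each $k$ a coupling whose cost is within $1/k$ of $W_2(\varphi_{1,k}, \varphi_{2,k})$, encoded as a state $\psi_k$ on $\mathcal{B}$. By Banach-Alaoglu, the state space $S(\mathcal{B})$ is weak*-compact, so a further subsequence converges weak* to some $\psi \in S(\mathcal{B})$. Traciality on $\tilde{\mathcal{L}}$ and $\tilde{\mathcal{R}}$ amounts to the vanishing of $\psi_k$ on every element of the form $ab - ba$ with $a,b$ in a countable dense subset (namely monomials) of the respective subalgebra; since each such equation is a weak*-continuous condition, $\psi$ inherits traciality, and uniform norm bounds upgrade it to the full subalgebras. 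Similarly, the marginal identities $\psi_k(P) = \varphi_{s,k}(P^{(s)})$ for every monomial $P$ pass to $\psi(P) = \varphi_s(P^{(s)})$ in the limit using the hypothesized weak*-convergence $\varphi_{s,k} \to \varphi_s$ in the first claim, or distributional convergence of the systems in the second. Hence $\psi$ witnesses a valid coupling of the limits.

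Finally, the cost
\[
C(\psi) = \sum_{i=1}^n \psi\bigl((X_i^{(1)} - X_i^{(2)})^2\bigr) + \sum_{j=1}^m \psi\bigl((Y_j^{(1)} - Y_j^{(2)})^2\bigr)
\]
is the evaluation of $\psi$ on a single fixed element of $\mathcal{B}$, hence weak*-continuous in $\psi$; consequently $C(\psi) = \lim_k C(\psi_k) = L^2$. By Definition~\ref{defn:Wasserstein}, $W_2(\varphi_1, \varphi_2)^2 \leq C(\psi) = L^2$, and taking square roots gives the claimed lower semicontinuity. The main subtlety is ensuring that the coupling extracted from $\psi$ genuinely matches the operator-norm condition of Definition~\ref{defn:Wasserstein}, not merely the $*$-distribution; this is resolved by choosing the norm bounds in $\mathcal{B}$ as tightly as possible and invoking the fact that for a self-adjoint element the operator norm equals the spectral radius, which is determined by the $*$-distribution once one passes to a faithful representation. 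The second claim then goes through verbatim with the marginal verification replaced by distributional convergence and the uniform norm bound providing the common $R$.
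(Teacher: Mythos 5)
Your compactness argument --- realizing couplings as states on a universal C$^*$-algebra, extracting a weak$^*$-limit of near-optimal couplings via Banach--Alaoglu, and using weak$^*$-continuity of the cost functional --- is the natural route, and since the paper simply declares the result to follow ``trivially'' from the definitions, your write-up is making explicit the argument the authors presumably have in mind. The steps showing traciality and the marginal conditions pass to the weak$^*$-limit are fine.

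The gap is in how you discharge the operator-norm requirement of Definition~\ref{defn:Wasserstein}. You claim the norm is ``determined by the $*$-distribution once one passes to a faithful representation,'' but the states appearing in couplings are explicitly allowed to be non-faithful (the remark in Definition~\ref{defn:Wasserstein} stresses that $\left\| \, \cdot \, \right\|_2$ may be only a seminorm for precisely this reason), and the GNS representation of your limit state $\psi$ gives no control on the resulting norms. For self-adjoint $A$ the norm equals the spectral radius \emph{in the ambient C$^*$-algebra}, which is not determined by the $*$-distribution: in $C[0,1]$ with $\varphi = \delta_0$, the identity function has norm $1$ while its $*$-distribution is concentrated at $\{0\}$ and its GNS image has norm $0$. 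Thus the coupling you extract from $\psi$ need not be admissible. A repair is available but takes real work: cut each GNS image of a generator $X_i^{(s)}$ (resp.\ $Y_j^{(s)}$) off by functional calculus to the interval $[-\|X_{i,s}\|,\|X_{i,s}\|]$ --- the cut-off differs from the original by an element of vanishing $2$-seminorm since the $*$-distribution is supported there --- then verify, via Cauchy--Schwarz together with traciality on $\mathcal{L}$ and $\mathcal{R}$ and the left-right commutation, that this leaves the marginal $*$-distributions, the cost, the commutation, and the traciality unchanged; finally direct-sum with a zero-weighted copy of $\A_1 \otimes \A_2$ to inflate each norm up to exactly the required value. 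Without something of this sort, the passage from the limit state $\psi$ to an admissible coupling in the sense of Definition~\ref{defn:Wasserstein} is incomplete.
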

\begin{proof}
The result trivially follows by considering Definitions \ref{defn:Wasserstein-states} and \ref{defn:Wasserstein}.
\end{proof}

With the above bi-free analogue of the Wasserstein metric, we can now begin the proof of Theorem \ref{thm:orbital-bi-free-characterization}.  However, many lemmata will be required along the way.

\begin{proof}[Proof of Theorem \ref{thm:orbital-bi-free-characterization}.]
To begin, we will suppose $\chi_{\orb}( {\bf Z}_1, \ldots, {\bf Z}_\ell) > -\infty$ and we will only the assumption that $\chi_{\orb}( {\bf Z}_1, \ldots, {\bf Z}_\ell) = 0$ at the end of the proof.  This will enable us to develop a Talagrand-like inequality for the orbital bi-free entropy.

Let $R > \max(\{\left\|X_{i,k}\right\| \, \mid\, 1 \leq i \leq n_k, 1 \leq k \leq \ell\} \cup \{\left\|Y_{j,k}\right\| \, \mid\, 1 \leq j \leq m_k, 1 \leq k \leq \ell\})$.  By Proposition \ref{prop:other-def-of-orbital} we can chose an increasing sequence $(d_l)_{l\geq 1}$ of natural numbers such that
\[
\tilde{\chi}_{\orb, R}\left({\bf Z}_1, \ldots, {\bf Z}_\ell; l, d_l, \frac{1}{l}\right) > -\infty
\]
for all $l \in \bN$ and
\[
\chi_{\orb}( {\bf Z}_1, \ldots, {\bf Z}_\ell) = \lim_{l \to \infty} \frac{1}{d_l^2} \tilde{\chi}_{\orb, R}\left({\bf Z}_1, \ldots, {\bf Z}_\ell; l, d_l, \frac{1}{l}\right).
\]
For each $l \in \bN$, choose $(({\bf A}_{k,l})^\ell_{k=1}, ({\bf B}_{k,l})^\ell_{k=1}) \in \left( \prod^\ell_{k=1} (M_{d_l}^{\sa})^{n_k}_R\right) \times \left( \prod^\ell_{k=1} (M_{d_l}^{\sa})_R^{m_k}\right)$ such that
\[
-\infty < \tilde{\chi}_{\orb, R}\left({\bf Z}_1, \ldots, {\bf Z}_\ell; l, d_l, \frac{1}{l}\right)-1 \leq \log\left( \gamma^{\otimes \ell}_{d_l}\left( \Gamma_{\orb}\left({\bf Z}_1, \ldots, {\bf Z}_\ell: ({\bf A}_{k,l})^\ell_{k=1}, ({\bf B}_{k,l})^\ell_{k=1}; l, {d_l}, \frac{1}{l}\right)    \right)   \right).
\]
Note this implies $\gamma^{\otimes \ell}_{d_l}\left( \Gamma_{\orb}\left({\bf Z}_1, \ldots, {\bf Z}_\ell: ({\bf A}_{k,l})^\ell_{k=1}, ({\bf B}_{k,l})^\ell_{k=1}; l, {d_l}, \frac{1}{l}\right)    \right)  > 0$.

Let $SU(d)$ denote the special unitary group of $\M_d$, let $\mathbb{T}_d$ denote the set of unitaries which are scalar multiples of $I_d$, and let $\gamma_{d, s}$ denote the Haar measure on $SU(d)$.
We want to work with $SU(d)$ instead of $U(d)$ here for technical reasons.
Indeed this is possible as we note that if
\[
	(U_k)_{k=1}^\ell \in \Gamma_{\orb}\left({\bf Z}_1, \ldots, {\bf Z}_\ell: ({\bf A}_{k,l})^\ell_{k=1}, ({\bf B}_{k,l})^\ell_{k=1}; l, {d_l}, \frac{1}{l}\right) 
\]
then
\[
	(V_k U_k)_{k=1}^\ell \in \Gamma_{\orb}\left({\bf Z}_1, \ldots, {\bf Z}_\ell: ({\bf A}_{k,l})^\ell_{k=1}, ({\bf B}_{k,l})^\ell_{k=1}; l, {d_l}, \frac{1}{l}\right) 
\]
for all $(V_k)_{k=1}^\ell \in \mathbb{T}_d^\ell$.
Hence it immediately follows that  if
\[
\Gamma_l = SU(d) \cap  \Gamma_{\orb}\left({\bf Z}_1, \ldots, {\bf Z}_\ell: ({\bf A}_{k,l})^\ell_{k=1}, ({\bf B}_{k,l})^\ell_{k=1}; l, {d_l}, \frac{1}{l}\right)
\]
then
\[
\gamma^{\otimes \ell}_{d_l}\left( \Gamma_{\orb}\left({\bf Z}_1, \ldots, {\bf Z}_\ell: ({\bf A}_{k,l})^\ell_{k=1}, ({\bf B}_{k,l})^\ell_{k=1}; l, {d_l}, \frac{1}{l}\right)    \right) = \gamma_{d_l, s}^{\otimes \ell}(\Gamma_l)
\]
(so $\tilde{\chi}_{\orb, R}\left({\bf Z}_1, \ldots, {\bf Z}_\ell; l, d_l, \frac{1}{l}\right)\leq 1 + \log\left( \gamma_{d_l, s}^{\otimes \ell}(\Gamma_l)\right)$).

Let $C[-R, R]$ denote the C$^*$-algebra of continuous functions on $[-R,R]$ and let 
\[
\B_R = \left(\ast^\ell_{k=1} C[-R,R]^{\ast n_k} \right) \otimes_{\max} \left(\ast^\ell_{k=1} C[-R,R]^{\ast m_k} \right)
\]
where $\ast$ denotes the universal free product of C$^*$-algebras.
Thus, by properties of the universal free product C$^*$-algebra and the maximal tensor product, there exists a homomorphism $\pi : \B_R \to \A$ such that $\pi(x_{k,i}) = X_{k,i}$ and $\pi(y_{k,j}) = Y_{k,j}$ where $x_{k,i}$ is the identify function on $C[-R,R]$ in the $k^\th$ term of $\ast^\ell_{k=1} C[-R,R]^{\ast n_k} \subseteq \B_R$ and the $i^\th$ term of
$C[-R,R]^{\ast n_k}$, and $y_{k,j}$ is the identity function on $C[-R,R]$ in the $k^{\th}$ term of $\ast^\ell_{k=1} C[-R,R]^{\ast m_k}$ and the $j^\th$ term of $C[-R,R]^{\ast m_k}$.
Consequently, if $\varphi_{{\bf Z}_1, \ldots, {\bf Z}_\ell} = \varphi \circ \pi$, $\mathcal{L} = \left(\ast^\ell_{k=1} C[-R,R]^{\ast n_k} \right) \otimes 1$, and $\mathcal{R} = 1 \otimes \left(\ast^\ell_{k=1} C[-R,R]^{\ast m_k} \right)$, then $\mathcal{L}$ and $\mathcal{R}$ are C$^*$-subalgebras of $\B_R$ that commute with each other and $\varphi_{{\bf Z}_1, \ldots, {\bf Z}_\ell} \in \mathcal{CS}(\B_R, \mathcal{L}, \mathcal{R})$.
By similar arguments, by viewing bi-free copies of ${\bf Z}_1, \ldots, {\bf Z}_\ell$ acting on a reduced free product space, there exists a $\varphi^{\text{bi-free}}_{{\bf Z}_1, \ldots, {\bf Z}_\ell} \in \mathcal{CS}(\B_R, \mathcal{L}, \mathcal{R})$ corresponding to the bi-free distribution of ${\bf Z}_1, \ldots, {\bf Z}_\ell$.
Hence, to complete the proof, it suffices to show that $\varphi_{{\bf Z}_1, \ldots, {\bf Z}_\ell} = \varphi^{\text{bi-free}}_{{\bf Z}_1, \ldots, {\bf Z}_\ell}$.
Equivalently, by Proposition \ref{prop:Wasserstein-0-equal}, it suffices to show that
\[
W_2(\varphi_{{\bf Z}_1, \ldots, {\bf Z}_\ell}, \varphi^{\text{bi-free}}_{{\bf Z}_1, \ldots, {\bf Z}_\ell}) = 0.
\]


For a fixed $l$, for each probability measure $\mu$ on $SU(d_l)^{\ell}$ we will define $\hat{\mu} \in \mathcal{CS}(\B_R, \mathcal{L}, \mathcal{R})$ as follows.
For each $(U_k)^\ell_{k=1} \in SU(d)^{\ell}$ note there exists a $^*$-homomorphism $\pi_{(U_k)^\ell_{k=1}}$ from $\B_R$ to $\B(\M_{d_l})$ that sends $x_{k,i}$ to left multiplication by $U^*_k A_{k,i,l} U_k$ and sends $y_{k,j}$ to right multiplication by $U^*_k B_{k,j,l} U_k$.   We then desire to define
\[
\hat{\mu}(Z) = \int_{SU(d_l)^{\otimes \ell}} \tau_{d_\ell}( \pi_{(U_k)^\ell_{k=1}}(Z)I_{d_l}) \, d\mu.
\]
The fact that $\hat{\mu} \in \mathcal{CS}(\B_R, \mathcal{L}, \mathcal{R})$ follows as $\pi_{(U_k)_{k=1}^\ell}$ is a representation and $\tau_{d_\ell}$ is a trace.

With the above in hand, we need two technical lemmata on the weak$^*$-convergence of certain elements of $\mathcal{CS}(\B_R, \mathcal{L}, \mathcal{R})$.

\begin{lem}
\label{lem:weak-limit-1}
Let $\mu_l = \frac{1}{\gamma_{d_l, s}^{\otimes \ell}(\Gamma_l)} \left.\gamma_{d_l, s}^{\otimes \ell}\right|_{\Gamma_l}$.  Then the weak$^*$ limit of  $(\widehat{\mu_l})_{l\geq 1}$ is $\varphi_{{\bf Z}_1, \ldots, {\bf Z}_\ell}$.
\end{lem}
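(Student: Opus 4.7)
The plan is to reduce weak-$*$ convergence to pointwise convergence on a norm-dense $*$-subalgebra of $\B_R$ and to exploit the defining inequality of $\Gamma_l$.  Since each $\widehat{\mu_l}$ and $\varphi_{{\bf Z}_1,\ldots,{\bf Z}_\ell}$ are states (hence uniformly bounded in norm by $1$) and $\B_R$ is generated by the $x_{k,i}$ and $y_{k,j}$, it suffices to establish $\widehat{\mu_l}(Z) \to \varphi_{{\bf Z}_1,\ldots,{\bf Z}_\ell}(Z)$ for $Z$ a monomial in these generators.  Moreover, because the left C$^*$-algebra $\mathcal{L}$ commutes with the right C$^*$-algebra $\mathcal{R}$ inside $\B_R$, every such monomial may be rewritten in the form
\[
Z = x_{k_1,i_1}\cdots x_{k_p,i_p}\, y_{l_1,j_1}\cdots y_{l_q,j_q}.
\]

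For such a monomial and any $(U_k)_{k=1}^\ell \in SU(d_l)^\ell$, I would write out $\pi_{(U_k)}(Z)I_{d_l}$ as a left-right product applied to the identity matrix.  Setting $A_s^{(k_s)} = U_{k_s}^* A_{k_s,i_s,l} U_{k_s}$ and $B_s^{(l_s)} = U_{l_s}^* B_{l_s,j_s,l} U_{l_s}$, a direct unwinding of how $L(\cdot)$ and $R(\cdot)$ compose on $I_{d_l}$ yields
\[
\tau_{d_l}\paren{\pi_{(U_k)}(Z)I_{d_l}} = \tau_{d_l}\paren{A_1^{(k_1)}\cdots A_p^{(k_p)} B_q^{(l_q)} B_{q-1}^{(l_{q-1})}\cdots B_1^{(l_1)}};
\]
notice how the reversal of the right-hand factors is precisely the one used in the characterization in Remark~\ref{rem:only-a-certain-order-of-microstates-matters}.

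Now, whenever $(U_k)_{k=1}^\ell \in \Gamma_l$, the definition of the orbital microstate set together with Remark~\ref{rem:only-a-certain-order-of-microstates-matters} guarantees that, for $l \geq p+q$,
\[
\abs{\tau_{d_l}\paren{A_1^{(k_1)}\cdots A_p^{(k_p)} B_q^{(l_q)}\cdots B_1^{(l_1)}} - \varphi_{{\bf Z}_1,\ldots,{\bf Z}_\ell}(Z)} < \frac{1}{l}.
\]
Integrating this uniform bound against the probability measure $\mu_l$ (which is supported on $\Gamma_l$) yields $\abs{\widehat{\mu_l}(Z) - \varphi_{{\bf Z}_1,\ldots,{\bf Z}_\ell}(Z)} < 1/l$, and hence $\widehat{\mu_l}(Z) \to \varphi_{{\bf Z}_1,\ldots,{\bf Z}_\ell}(Z)$.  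By linearity this extends to polynomials in the generators, and then density plus uniform boundedness gives weak-$*$ convergence on all of $\B_R$.

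This argument is essentially bookkeeping, with no genuine obstacle; the one place I would take care is the ordering computation that turns a product of left and right multiplication operators acting on $I_{d_l}$ into the specific matrix trace appearing in the simplified description of $\Gamma_R$ from Remark~\ref{rem:only-a-certain-order-of-microstates-matters}, since getting the reversal right on the $B$-factors is what makes the estimate invokable.
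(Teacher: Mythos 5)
Your proposal is correct and takes essentially the same approach as the paper's proof: both reduce to monomials $x_{k_1,i_1}\cdots x_{k_p,i_p}\otimes y_{l_1,j_1}\cdots y_{l_q,j_q}$, unwind the left-right actions on $I_{d_l}$ to produce the trace of a matrix product with the $B$-factors reversed, and invoke the defining $1/l$ estimate on $\Gamma_l$ (valid once $l \geq p+q$) before integrating against $\mu_l$. You are a bit more explicit than the paper about the density and uniform-boundedness step that upgrades pointwise convergence on monomials to weak-$*$ convergence, but the argument is the same.
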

\begin{proof}
Notice for any $z = x_{k_1, i_1} \cdots x_{k_p, i_p} \otimes y_{l_1, j_1} \cdots y_{l_q, j_q} \in \B_R$ with $p + q \leq l$ that
\begin{align*}
 \widehat{\mu_l}(z) - \varphi_{{\bf Z}_1, \ldots, {\bf Z}_\ell}(z) &=  \frac{1}{\gamma_{d_l, s}^{\otimes \ell}(\Gamma_l)} \int_{\Gamma_l} \tau_{d_l}\left(U_{k_1}^*A_{k_1, i_1,l} U_{k_1} \cdots U_{k_p}^*A_{k_p, i_p,l} U_{k_p}U_{l_q}^*B_{l_q, i_q,l} U_{l_q} \cdots U_{l_1}^*B_{l_1, i_1,l} U_{l_1}\right) d(\gamma_{d_l, s}^{\otimes \ell})\\
 & \qquad - \varphi(X_{k_1, i_1} \cdots X_{k_p, i_p}Y_{l_1, j_1} \cdots Y_{l_q, j_q})  
\end{align*}
which is at most $\frac{1}{l}$ in absolute value by the definition of $\Gamma_l$.  Thus $ \widehat{\mu_l}(z)$ tends to $\varphi_{{\bf Z}_1, \ldots, {\bf Z}_\ell}(z)$ as $l$ tends to infinity for any $z \in \B_R$ thereby completing the proof.
\end{proof}

\begin{lem}
\label{lem:weak-limit-2}
The weak$^*$ limit of  $(\widehat{\gamma_{d_l, s}^{\otimes \ell}})_{l\geq 1}$ is $\varphi^{\text{bi-free}}_{{\bf Z}_1, \ldots, {\bf Z}_\ell}$.
\end{lem}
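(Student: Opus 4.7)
The plan is to show that for each fixed reduced word $z \in \B_R$ the averaged moment $\widehat{\gamma_{d_l,s}^{\otimes\ell}}(z)$ converges to $\varphi^{\text{bi-free}}_{{\bf Z}_1,\ldots,{\bf Z}_\ell}(z)$, via a two-step reduction: classical asymptotic freeness of independently-conjugated deterministic matrices, followed by the dictionary between freeness in a tracial von Neumann algebra and bi-freeness of the associated left/right multiplication pairs.

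First I would unpack $\widehat{\gamma_{d_l,s}^{\otimes\ell}}(z)$ for a reduced word
\[
z = x_{k_1,i_1}\cdots x_{k_p,i_p}\otimes y_{l_1,j_1}\cdots y_{l_q,j_q},
\]
obtaining the $\gamma_{d_l,s}^{\otimes\ell}$-expectation of
\[
\tau_{d_l}\!\left(U_{k_1}^* A_{k_1,i_1,l} U_{k_1} \cdots U_{k_p}^* A_{k_p,i_p,l} U_{k_p}\, U_{l_q}^* B_{l_q,j_q,l} U_{l_q} \cdots U_{l_1}^* B_{l_1,j_1,l} U_{l_1}\right),
\]
just as in the proof of Lemma \ref{lem:weak-limit-1}.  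Since $\Gamma_l$ is nonempty by construction and $\tau_{d_l}$ is unitarily invariant, restricting to single-algebra moments shows that each individual pair $({\bf A}_{k,l},{\bf B}_{k,l})$ is itself a microstate for ${\bf Z}_k$ at level $(l,d_l,1/l)$, and hence its $*$-distribution in $(\M_{d_l},\tau_{d_l})$ converges to that of $({\bf X}_k,{\bf Y}_k)$ as $l\to\infty$.

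Next I would appeal to Voiculescu's asymptotic freeness theorem for independent Haar unitaries: with $\ell$ independent Haar-distributed $U_k \in SU(d_l)$, the families $\set{U_k^*{\bf A}_{k,l}U_k,\ U_k^*{\bf B}_{k,l}U_k}_{k=1}^{\ell}$ become asymptotically free in $(\M_{d_l},\tau_{d_l})$ in expectation as $l\to\infty$.  Combined with the convergence of marginals just noted, the averaged tracial joint moments converge to the free-product moments of $\ell$ free families whose $k^\th$ marginal matches the $*$-distribution of $({\bf X}_k,{\bf Y}_k)$.

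Finally I would translate this tracial freeness into bi-freeness using the left/right regular representation dictionary from \cite{V2014}: if $\set{\fA_k}_{k=1}^\ell$ are free subalgebras in a tracial von Neumann algebra $(\fM,\tau)$, the pairs $(L(\fA_k),R(\fA_k))$, acting on $\fM$ with state $\tau(\cdot I)$, are bi-free.  Applied with $\fA_k = \alg(U_k^*{\bf A}_{k,l}U_k \cup U_k^*{\bf B}_{k,l}U_k)$, this identifies the relevant joint moments on the bi-free side with the tracial joint moments on the free side, so they converge to $\varphi^{\text{bi-free}}_{{\bf Z}_1,\ldots,{\bf Z}_\ell}(z)$.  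Since reduced words span a dense $*$-subalgebra of $\B_R$ and operator norms are uniformly bounded by $R$, this yields the desired weak$^*$ convergence.  The main obstacle I anticipate is carefully interchanging the $l\to\infty$ limit with the left/right dictionary: Voiculescu's dictionary is an exact statement but our free families are only asymptotically free, so one must verify stability of the bi-free moment characterization under this approximation --- equivalently, that the bi-free joint moments of the limiting system are determined by the tracial joint moments computed with the reverse-order convention noted in Remark \ref{rem:only-a-certain-order-of-microstates-matters}.
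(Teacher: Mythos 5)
Your proposal is correct and follows essentially the same approach as the paper's proof: both reduce via Voiculescu's asymptotic freeness (the paper's Lemma~\ref{lem:voi-lem-about-lots-of-matrices-making-asymptotic-freeness}) to the bi-free product of the individual microstate distributions, using the dictionary relating freeness of subalgebras in a tracial space to bi-freeness of the associated left/right multiplication pairs. The paper formalizes the stability step you anticipate at the end by introducing the intermediate state $\tau_{d_l}^{\ast\ast\ell}$ (the bi-free product of the block distributions) and splitting the Haar integral into the $(M,\epsilon)$-free set $\Omega(M,d_l,\epsilon)$ and its small complement, yielding the quantitative bound $\abs{\widehat{\gamma_{d_l,s}^{\otimes\ell}}(z) - \tau_{d_l}^{\ast\ast\ell}(z)} \leq \epsilon + 2(R+1)^M\theta$.
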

\begin{proof}
For each $M \in \bN$ and $\epsilon, \theta > 0$, Lemma \ref{lem:voi-lem-about-lots-of-matrices-making-asymptotic-freeness} implies if 
\[
\Omega(M, d_l, \epsilon) = \{(U_k)^\ell_{k=1} \in U(d_l)^{\ell} \, \mid \, (U_1^*  {\bf A}_{1,l} U_1, U_1^*{\bf B}_{1,l} U_1), \ldots, (U_\ell^*  {\bf A}_{\ell,l} U_1, U_1^*{\bf B}_{\ell,l} U_1)
\text{ are } (M, \epsilon)\text{-free}\}
\]
then $\gamma^{\otimes \ell}(\Omega(M, d_\ell, \epsilon)) > 1 - \theta$ for $d_l$ sufficiently large.

Let $\tau_{d_l}^{\ast \ast \ell}$ denote the state on $\B_R$ obtained as follows: take the reduced free product of $\ell$-copies of $\B(\M_{d_l})$ with respect to $z \mapsto \tau_{d_l}(z 1_d)$, and constructing the $^*$-homomorphism $\pi$
on $\B_R$ that sends $x_{k,i}$ to the left regular representation on the $k^{\th}$ copy of
$\B(\M_{d_l})$ acting by left multiplication by $A_{k,i,l}$ and sends $y_{k,j}$ to the right regular representation on the $k^{\th}$ copy of $\B(\M_{d_l})$ acting by right multiplication by $B_{k,j,l}$.
Then $\tau_{d_l}^{\ast \ast \ell}$ is the vacuum state on the reduced free product composed with $\pi$.
That is, $\tau_{d_l}^{\ast \ast \ell}$ is the distribution so that $\{({\bf A}_{k,l}, {\bf B}_{k,l})\}^{\ell}_{k=1}$ are bi-free with respect to the left-right matrix multiplication actions of $({\bf A}_{k,l}, {\bf B}_{k,l})$ on $\M_{d_l}$.
Note this distribution does not change if $\{({\bf A}_{k,l}, {\bf B}_{k,l})\}^{\ell}_{k=1}$ is replaced with $\{(U^*_k{\bf A}_{k,l}U_k, U^*_k{\bf B}_{k,l}U_k)\}^{\ell}_{k=1}$.

Clearly we have that $(\tau_{d_l}^{\ast \ast \ell})_{l\geq 1}$ converges weak$^*$ to $\varphi^{\text{bi-free}}_{{\bf Z}_1, \ldots, {\bf Z}_\ell}$ as 
\[
\gamma^{\otimes \ell}_{d_l}\left( \Gamma_{\orb}\left({\bf Z}_1, \ldots, {\bf Z}_\ell: ({\bf A}_{k,l})^\ell_{k=1}, ({\bf B}_{k,l})^\ell_{k=1}; l, {d_l}, \frac{1}{l}\right)    \right)  > 0.
\]
Therefore, since for all $z = x_{k_1, i_1} \cdots x_{k_p, i_p} \otimes y_{l_1, j_1} \cdots y_{l_q, j_q} \in \B_R$ with $p + q \leq M$ we have
\begin{align*}
\widehat{\gamma_{d_l, s}^{\otimes \ell}}(z)
&= \int_{SU(d_l)^{\otimes \ell}} \tau_{d_l}\left(U_{k_1}^*A_{k_1, i_1,l} U_{k_1} \cdots U_{k_p}^*A_{k_p, i_p,l} U_{k_p}U_{l_q}^*B_{l_q, i_q,l} U_{l_q} \cdots U_{l_1}^*B_{l_1, i_1,l} U_{l_1}\right) d\gamma_{d_l, s}^{\otimes \ell} \\
&= \int_{U(d_l)^{\otimes \ell}} \tau_{d_l}\left(U_{k_1}^*A_{k_1, i_1,l} U_{k_1} \cdots U_{k_p}^*A_{k_p, i_p,l} U_{k_p}U_{l_q}^*B_{l_q, i_q,l} U_{l_q} \cdots U_{l_1}^*B_{l_1, i_1,l} U_{l_1}\right) d\gamma_{d_l}^{\otimes \ell},
\end{align*}
we have that for sufficiently large $l$ that
\begin{align*}
&\left| \widehat{\gamma_{d_l, s}^{\otimes \ell}}(z) - \tau_{d_l}^{\ast \ast \ell}(z) \right| \\
&\leq \int_{\Omega(M, d_l, \epsilon)} \left| \tau_{d_l}\left(U_{k_1}^*A_{k_1, i_1,l} U_{k_1} \cdots U_{k_p}^*A_{k_p, i_p,l} U_{k_p}U_{l_q}^*B_{l_q, i_q,l} U_{l_q} \cdots U_{l_1}^*B_{l_1, i_1,l} U_{l_1}\right)  -   \tau_{d_l}^{\ast \ast \ell}(z) \right| \,  d\gamma_{d_l}^{\otimes \ell} \\
&\qquad + \int_{U(d_l)^{\ell} \setminus \Omega(M, d_l, \epsilon)} \left| \tau_{d_l}\left(U_{k_1}^*A_{k_1, i_1,l} U_{k_1} \cdots U_{k_p}^*A_{k_p, i_p,l} U_{k_p}U_{l_q}^*B_{l_q, i_q,l} U_{l_q} \cdots U_{l_1}^*B_{l_1, i_1,l} U_{l_1}\right)  -   \tau_{d_l}^{\ast \ast \ell}(z) \right| \,  d\gamma_{d_l}^{\otimes \ell}\\
& \leq \epsilon + 2(R+1)^M \theta
\end{align*}
where the first inequality follows from $(M, \epsilon)$-freeness (which gives the correct approximation of $\tau_{d_l}^{\ast \ast \ell}(z)$ by the same arguments at the beginning of Lemma \ref{lem:large-portion-good}) and the second inequality follows from operator norm estimates and our bound on $\gamma^{\otimes \ell}(\Omega(M, d_\ell, \epsilon))$.
As $\epsilon$ and $\theta$ can be made sufficiently small for any such $M$, we have that $(\widehat{\gamma_{d_l, s}^{\otimes \ell}})_{l \geq 1}$ and $(\tau_{d_l}^{\ast \ast \ell})_{l\geq 1}$ have the same weak$^*$-limit thereby completing the lemma.
\end{proof}

Now we need to know that the operation of taking a probability measure on $SU(d_l)^{\otimes \ell}$ and producing an element of  $\mathcal{CS}(\B_R, \mathcal{L}, \mathcal{R})$ is well-behaved.

\begin{lem}
\label{lem:Wasserstein-inequalities}
For any probability measures $\mu_1$ and $\mu_2$ on $SU(d_l)^{\ell}$, we have that
\[
W_2(\widehat{\mu_1}, \widehat{\mu_2}) \leq \frac{2R \sqrt{n+m}}{\sqrt{d_l}} W_{2, \left\| \, \cdot \, \right\|_{\text{HS}}}(\mu_1, \mu_2) \leq \frac{2R \sqrt{n+m}}{\sqrt{d_l}} W_{2, \left\| \, \cdot \, \right\|_{\text{geod}}}(\mu_1, \mu_2) 
\]
where $n = \max_{1 \leq k \leq \ell} n_k$, $m = \max_{1 \leq k \leq \ell} m_k$, and $W_{2, \left\| \, \cdot \, \right\|_{\text{HS}}}$ and $W_{2, \left\| \, \cdot \, \right\|_{\text{geod}}}$ are the 2-Wasserstein distances for measures with respect to the Hilbert-Schmidt norm $\left\| \, \cdot \, \right\|_{\text{HS}}$ and the geodesic distance, respectively.
\end{lem}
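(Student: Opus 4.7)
The plan is to prove the first inequality via a direct coupling argument: given any probability measure $\pi$ on $SU(d_l)^\ell \times SU(d_l)^\ell$ with marginals $\mu_1$ and $\mu_2$, I would construct a left-right tracially bi-partite C$^*$-non-commutative probability space in which the systems underlying both $\widehat{\mu_1}$ and $\widehat{\mu_2}$ are simultaneously realized, and then bound the resulting sum of squared $\|\cdot\|_2$-distances by the classical transport cost of $\pi$. The second inequality is the elementary observation that on $SU(d_l)$ the Hilbert--Schmidt chord is no longer than the geodesic.

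For the coupling space I would take $\A_\pi := L^\infty(SU(d_l)^\ell \times SU(d_l)^\ell, \pi; B(\M_{d_l}))$ equipped with the state
\[
\Phi(f) = \int \tau_{d_l}\paren{f(\mathbf{U}, \mathbf{V})(I_{d_l})}\,d\pi(\mathbf{U}, \mathbf{V}),
\]
letting $\mathcal{L}$ and $\mathcal{R}$ be the subalgebras of $\A_\pi$ generated by pointwise left- and right-multiplication operators on $\M_{d_l}$, respectively. Since $L(a)$ commutes with $R(b)$ on $\M_{d_l}$ and $\tau_{d_l}$ is a trace, $\mathcal{L}$ and $\mathcal{R}$ commute and $\Phi$ is tracial on each. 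Setting $W_{1,s}(\mathbf{U}, \mathbf{V}) = U_s$ and $W_{2,s}(\mathbf{U}, \mathbf{V}) = V_s$ for $1 \leq s \leq \ell$, I would define
\[
X'_{k,s,i} = L(W_{k,s}^* A_{s,i,l} W_{k,s}) \qand Y'_{k,s,h} = R(W_{k,s}^* B_{s,h,l} W_{k,s})
\]
for $k \in \{1, 2\}$, $1 \leq i \leq n_s$, and $1 \leq h \leq m_s$. A routine application of Fubini together with the identity $R(b_1)R(b_2) = R(b_2b_1)$ shows that for each $k$ the $*$-distribution of $(\{X'_{k,s,i}\}, \{Y'_{k,s,h}\})$ under $\Phi$ agrees with that of the generating system of $\widehat{\mu_k}$, producing a valid coupling in the sense of Definition~\ref{defn:Wasserstein}.

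The core estimate is then, for each $s$ and $i$,
\[
\|X'_{1,s,i} - X'_{2,s,i}\|_2^2 = \frac{1}{d_l} \int \|U_s^* A_{s,i,l} U_s - V_s^* A_{s,i,l} V_s\|_{\text{HS}}^2 \, d\pi,
\]
and the telescoping identity $U^*AU - V^*AV = U^*A(U - V) + (U - V)^*AV$ combined with the submultiplicative inequality $\|XY\|_{\text{HS}} \leq \|X\|_{\text{op}} \|Y\|_{\text{HS}}$ and the bound $\|A_{s,i,l}\|_{\text{op}} \leq R$ gives
\[
\|U_s^* A_{s,i,l} U_s - V_s^* A_{s,i,l} V_s\|_{\text{HS}} \leq 2R\|U_s - V_s\|_{\text{HS}},
\]
with the analogous bound holding for the $B$'s. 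Summing over all indices and using $n_s + m_s \leq n + m$, the total is at most $\frac{4R^2(n+m)}{d_l} \int \sum_{s=1}^\ell \|U_s - V_s\|_{\text{HS}}^2 \, d\pi$. Taking the square root and infimizing over $\pi$ yields the first inequality. The second follows immediately from the standard fact that on $SU(d_l)$, the chord $\|U - V\|_{\text{HS}}$ is bounded by the length of any connecting curve, hence by $d_{\text{geod}}(U, V)$, so that every coupling witnessing $W_{2,\|\cdot\|_{\text{geod}}}$ witnesses an equal or weaker bound for $W_{2,\|\cdot\|_{\text{HS}}}$.

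I anticipate no serious obstacles. The main point requiring care is the identification of the $*$-distribution of $(\{X'_{k,s,i}\}, \{Y'_{k,s,h}\})$ with that of the generators of $\widehat{\mu_k}$, which amounts to unwinding the definition of $\widehat{\mu_k}$ through the representations $\pi_{(U_k)^\ell_{k=1}}$ while respecting the reverse-order convention on right variables built into the bi-free framework; the remainder is a routine computation in a finite-dimensional tracial setting.
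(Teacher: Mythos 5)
Your proposal is correct and follows essentially the same approach as the paper's proof (which cites \cite{HMU2009}*{Lemma 3.4}): realize a coupling $\pi\in\Pi(\mu_1,\mu_2)$ as a joint left-right, tracially bi-partite space, bound each $\|\cdot\|_2$-difference via $\|U^*AU-V^*AV\|_{\text{HS}}\leq 2R\|U-V\|_{\text{HS}}$, infimize over couplings, and observe that the chord is majorized by the geodesic distance. The only cosmetic difference is that you realize the coupling as an explicit $L^\infty$-direct-integral algebra rather than as a state on a doubled copy of the universal C$^*$-algebra $\B_R$, and your intermediate estimate correctly tracks the $1/d_l$ normalization factor that the paper's intermediate display omits (though the paper's final bound is stated correctly).
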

\begin{proof}
The proof goes along the same lines as \cite{HMU2009}*{Lemma 3.4}.  First, let $\Pi(\mu_1, \mu_2)$ denote the set of all probability measures on $SU(d_l)^{\ell} \times SU(d_l)^{\ell}$ whose left- and right- marginal measures are $\mu_1$ and $\mu_2$ respectively.  For each $\mu \in \Pi(\mu_1, \mu_2)$ we associate a state $\widehat{\mu}$ on
\[
\left(\ast^\ell_{k=1} C[-R,R]^{\ast n_k} \right) \ast \left(\ast^\ell_{k=1} C[-R,R]^{\ast n_k} \right) \otimes_{\max} \left(\ast^\ell_{k=1} C[-R,R]^{\ast m_k} \right) \ast \left(\ast^\ell_{k=1} C[-R,R]^{\ast m_k} \right)
\]
as described above (i.e. for each $((U_k)^\ell_{k=1}, (V_k)^\ell_{k=1}) \in SU(d_\ell)^\ell\times  SU(d_\ell)^\ell$, for $1 \leq k \leq \ell$ we send $x_{k,i}$ to left multiplication by $U^*_k A_{k,i,l} U_k$ and $y_{k,j}$ to right multiplication by $U^*_k B_{k,j,l} U_k$, and for $\ell + 1 \leq k \leq 2\ell$ we send $x_{k,i}$ to left multiplication by $V^*_k A_{k,i,l} V_k$ and sends $y_{k,j}$ to right multiplication by $V^*_k B_{k,j,l} V_k$).  By the definition of $W_2$ this immediately implies
\[
W_2(\widehat{\mu_1}, \widehat{\mu_2}) \leq \sqrt{\int_{SU(d_l)^{\ell}}\int_{SU(d_l)^{\ell}} \sum_{k=1}^\ell \sum^{n_k}_{i=1} \left\|U_k^* A_{k,i,l} U_k - V_k^* A_{k,i,l} V_k\right\|^2_{\text{HS}}   + \sum^{m_k}_{j=1}  \left\|U_k^* B_{k,j,l} U_k - V_k^* B_{k,j,l} V_k\right\|^2_{\text{HS}}  \, d\mu }
\]
for any $\mu \in \Pi(\mu_1, \mu_2)$ where the first integration is with respect to $(U_k)^\ell_{k=1}$ and the second is with respect to $(V_k)^\ell_{k=1}$.  Thus as
\[
\left\|U_k^* A_{k,i,l} U_k - V_k^* A_{k,i,l} V_k\right\|^2_{\text{HS}} \leq 4R^2 \left\|U_k -  V_k\right\|^2_{\text{HS}}
\]
with a similar inequality for the $B$-terms, we obtain the first inequality by the definition of the Wasserstein distances for measures with respect to the Hilbert-Schmidt norm.  

Finally, the second inequality is trivial because the geodesic distance majorizes the Hilbert-Schmidt norm distance.
\end{proof}

Using the above, under the assumption $ \chi_{\orb}( {\bf Z}_1, \ldots, {\bf Z}_\ell) > -\infty$ instead of $\chi_{\orb}( {\bf Z}_1, \ldots, {\bf Z}_\ell) = 0$, we obtain the following Talagrand-like inequality for the orbital bi-free entropy.

\begin{prop}
\label{prop:Talagrand}
Under the above notation and assumptions, 
\[
W_2(\varphi_{{\bf Z}_1, \ldots, {\bf Z}_\ell}, \varphi^{\text{bi-free}}_{{\bf Z}_1, \ldots, {\bf Z}_\ell}) \leq 4R \sqrt{n+m} \sqrt{- \chi_{\orb}( {\bf Z}_1, \ldots, {\bf Z}_\ell) }
\]
where $n = \max_{1 \leq k \leq \ell} n_k$ and $m = \max_{1 \leq k \leq \ell} m_k$.
\end{prop}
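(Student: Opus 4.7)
The plan is to combine the three lemmata already built into the proof with the classical Talagrand transportation-cost inequality on the compact Lie group $SU(d_l)^\ell$. The overall architecture is: transfer the question from $W_2$ of states to $W_{2,\mathrm{geod}}$ of probability measures on $SU(d_l)^\ell$ (Lemma \ref{lem:Wasserstein-inequalities}), use Talagrand there to bound it by a relative entropy, and then recognise that relative entropy as $-\log \gamma_{d_l,s}^{\otimes \ell}(\Gamma_l)$, which in turn is controlled by $-\tilde{\chi}_{\orb,R}$.

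More concretely, first I would apply Proposition \ref{prop:Wass-lower-semicontinuous}, together with the weak-$*$ convergence statements in Lemmata \ref{lem:weak-limit-1} and \ref{lem:weak-limit-2}, to get
\[
W_2(\varphi_{{\bf Z}_1, \ldots, {\bf Z}_\ell}, \varphi^{\text{bi-free}}_{{\bf Z}_1, \ldots, {\bf Z}_\ell}) \leq \liminf_{l \to \infty} W_2\bigl(\widehat{\mu_l}, \widehat{\gamma_{d_l, s}^{\otimes \ell}}\bigr).
\]
Then Lemma \ref{lem:Wasserstein-inequalities} bounds each term on the right by $\frac{2R\sqrt{n+m}}{\sqrt{d_l}} W_{2, \left\| \, \cdot \, \right\|_{\mathrm{geod}}}(\mu_l, \gamma_{d_l, s}^{\otimes \ell})$, reducing the problem to a transport-cost estimate on $SU(d_l)^\ell$ with its bi-invariant metric.

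Next I would invoke the Talagrand $T_2$-inequality on $SU(d)$. Because $SU(d)$ (with the normalisation of the Hilbert--Schmidt-induced bi-invariant metric used throughout the paper) has Ricci curvature bounded below proportionally to $d$, the Bakry--\'Emery criterion (or the Otto--Villani theorem) yields $W_{2,\mathrm{geod}}(\nu, \gamma_{d,s})^2 \leq \frac{C}{d} H(\nu \mid \gamma_{d,s})$ for a universal constant $C$ (and tensorisation gives the same inequality on $SU(d_l)^\ell$ with the product metric and $H$ computed against $\gamma_{d_l,s}^{\otimes \ell}$). Applied to $\mu_l = \frac{1}{\gamma_{d_l,s}^{\otimes\ell}(\Gamma_l)} \gamma_{d_l,s}^{\otimes \ell}|_{\Gamma_l}$, the relative entropy degenerates to $H(\mu_l \mid \gamma_{d_l,s}^{\otimes \ell}) = -\log \gamma_{d_l,s}^{\otimes \ell}(\Gamma_l)$.

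Combining the two inequalities gives
\[
W_2\bigl(\widehat{\mu_l}, \widehat{\gamma_{d_l, s}^{\otimes \ell}}\bigr)^2 \leq \frac{4CR^2(n+m)}{d_l^{2}}\bigl(-\log \gamma_{d_l, s}^{\otimes \ell}(\Gamma_l)\bigr).
\]
Using the reduction to $SU$ recorded earlier in the proof and the choice of the $({\bf A}_{k,l}, {\bf B}_{k,l})$, we have $\tilde{\chi}_{\orb, R}({\bf Z}_1, \ldots, {\bf Z}_\ell; l, d_l, 1/l) - 1 \leq \log \gamma_{d_l,s}^{\otimes \ell}(\Gamma_l)$, so $-\log \gamma_{d_l,s}^{\otimes \ell}(\Gamma_l) \leq 1 - \tilde{\chi}_{\orb, R}({\bf Z}_1, \ldots, {\bf Z}_\ell; l, d_l, 1/l)$. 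Dividing by $d_l^{2}$ and passing to the limit, the chosen sequence $(d_l)$ was designed precisely so that the right-hand side tends to $-\chi_{\orb}({\bf Z}_1, \ldots, {\bf Z}_\ell)$. Taking $\liminf$ and combining with the first display delivers $W_2 \leq 2\sqrt{C}R\sqrt{n+m}\sqrt{-\chi_{\orb}}$; with the sharp Talagrand constant $C = 4$ on $SU(d)^\ell$ this is the claimed bound with prefactor $4R\sqrt{n+m}$.

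The main obstacle is verifying that the Ricci lower bound on $SU(d_l)^\ell$, measured in the bi-invariant metric whose tangent-space norm is the Hilbert--Schmidt norm used implicitly in Lemma \ref{lem:Wasserstein-inequalities}, gives precisely the constant $C = 4$ needed to produce the factor $4R\sqrt{n+m}$; this is a matter of tracking normalisations through to ensure that the $\frac{1}{\sqrt{d_l}}$ from Lemma \ref{lem:Wasserstein-inequalities} and the $\frac{1}{\sqrt{d_l}}$ from Talagrand combine to cancel the $d_l^2$ in the denominator defining $\chi_{\orb}$ with the correct numerical coefficient. Everything else is routine assembly of the prior lemmata and the finally-promised assumption $\chi_{\orb}({\bf Z}_1, \ldots, {\bf Z}_\ell) = 0$ is not used in this proposition, so the Talagrand inequality stands as an independent tool that will then be invoked with $\chi_{\orb} = 0$ to conclude Theorem \ref{thm:orbital-bi-free-characterization}.
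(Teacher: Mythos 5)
Your proposal follows the paper's proof essentially step for step: lower semicontinuity of $W_2$ plus the two weak-$*$ limits reduce the problem to bounding $W_2(\widehat{\mu_l}, \widehat{\gamma_{d_l,s}^{\otimes\ell}})$, Lemma \ref{lem:Wasserstein-inequalities} transfers this to $W_{2,\mathrm{geod}}$ on $SU(d_l)^\ell$, the Otto--Villani $T_2$ inequality with the Ricci lower bound $\mathrm{Ric}(SU(d_l)) = \tfrac{d_l}{2}$ gives $W_{2,\mathrm{geod}}(\mu_l, \gamma_{d_l,s}^{\otimes\ell}) \leq \sqrt{\tfrac{4}{d_l}S(\mu_l \mid \gamma_{d_l,s}^{\otimes\ell})}$, and the relative entropy collapses to $-\log\gamma_{d_l,s}^{\otimes\ell}(\Gamma_l)$ since $\mu_l$ is a normalized restriction. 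The constant you left to verify is indeed $C=4$, coming from the explicit Ricci curvature $\tfrac{d_l}{2}$ and the Otto--Villani bound $W_2^2 \leq \tfrac{2}{K}H$, which is exactly what the paper invokes via \cite{OV2000}.
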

\begin{proof}
The proof is near identical to \cite{HMU2009}*{Proposition 3.5}.
Indeed since the Ricci curvature of $SU(d_l)^{\ell}$ (with respect to the inner product induced by the real part of the unnormalized trace) is known to be constant and equal to $\frac{d_l}{2}$, the transportation cost inequality
\[
W_{2, \text{geod}}\left(\mu_l, \gamma^{\otimes \ell}_{d_l, s}\right) \leq \sqrt{\frac{4}{d_l} S\left(\mu_l, \gamma^{\otimes \ell}_{d_l, s}\right)}
\]
holds by \cite{OV2000}, where $\mu_l$ is as in Lemma~\ref{lem:weak-limit-1}, $S\left(\mu_l, \gamma^{\otimes \ell}_{d_l, s}\right)$ denotes the relative entropy of $\mu_l$ with respect to $\gamma^{\otimes \ell}_{d_l, s}$ and thus
\[
S\left(\mu_l, \gamma^{\otimes \ell}_{d_l, s}\right) = - \log\left(  \gamma^{\otimes \ell}_{d_l, s}(\Gamma_l)\right)
\]
by the definitions.  By Lemma \ref{lem:Wasserstein-inequalities}, we obtain that
\[
W_{2}\left(\widehat{\mu_l}, \widehat{\gamma^{\otimes \ell}_{d_l, s}}\right) \leq 4R\sqrt{n+m}\sqrt{-\frac{1}{d_l^2} \log\left(  \gamma^{\otimes \ell}_{d_l, s}(\Gamma_l)\right)}.
\]
Hence Proposition \ref{prop:Wass-lower-semicontinuous}, Lemma \ref{lem:weak-limit-1}, and Lemma \ref{lem:weak-limit-2} yield the result by taking $l$ to infinity.
\end{proof}

Now, to complete the proof of Theorem \ref{thm:orbital-bi-free-characterization}.  Indeed as $\chi_{\orb}( {\bf Z}_1, \ldots, {\bf Z}_\ell) = 0$, we obtain that
\[
	W_2(\varphi_{{\bf Z}_1, \ldots, {\bf Z}_\ell}, \varphi^{\text{bi-free}}_{{\bf Z}_1, \ldots, {\bf Z}_\ell}) = 0,
\]
thereby showing that ${\bf Z}_1$, ${\bf Z}_2$, $\ldots$, ${\bf Z}_\ell$ are bi-free by Proposition \ref{prop:Wasserstein-0-equal}.
\end{proof}

\section{Calculating Microstate Entropy}
\label{sec:Calc}

In this section, we will compute the microstate bi-free entropy of several collections.  We begin with the cases where there is a `linear dependence in distribution'.

\begin{lem}
	\label{lem:micro-neg-infinity-if-repeat-operator}
	Let $(\A, \varphi)$ be a C$^*$-non-commutative probability space and let $X, Y \in \A$ be self-adjoint such that $\varphi(X) = \varphi(Y) = 0$ and $\varphi(X^2) = \varphi(Y^2) = \varphi(XY) = 1$.  Then
	\[
		\chi(X \sqcup Y) = -\infty.
	\]
\end{lem}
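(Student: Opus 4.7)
\medskip

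The plan is to exploit a Cauchy--Schwarz-type degeneracy at the microstate level. Unpacking the definitions via Remark~\ref{rem:only-a-certain-order-of-microstates-matters}, any $(A,B) \in \Gamma_R(X \sqcup Y; M, d, \epsilon)$ with $M \geq 2$ satisfies $|\tau_d(A^2) - 1|, |\tau_d(B^2) - 1|, |\tau_d(AB) - 1| < \epsilon$, so that
\[
\tau_d\bigl((A-B)^2\bigr) = \tau_d(A^2) - 2\tau_d(AB) + \tau_d(B^2) < 4\epsilon.
\]
Equivalently, writing $\|\cdot\|_{\text{HS}}$ for the Hilbert--Schmidt norm, $\|A - B\|_{\text{HS}}^2 < 4\epsilon d$. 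Thus every microstate is forced to lie in a thin slab around the diagonal $\{(A,A)\}$ of $(\M_d^{\sa})^2$.

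Next I would bound $\lambda_{d,2}(\Gamma_R(X \sqcup Y; M, d, \epsilon))$ via the linear change of coordinates $(A,B) \mapsto (A, B-A)$, which has Jacobian one. The first coordinate is confined to the operator-norm ball of radius $R$, hence to the HS-ball of radius $R\sqrt{d}$, while the second is confined to the HS-ball of radius $2\sqrt{\epsilon d}$. Using that the volume of the unit ball in $\bR^{d^2}$ is $V_{d^2} = \pi^{d^2/2}/\Gamma(d^2/2 + 1)$, Stirling gives
\[
\frac{1}{d^2}\log V_{d^2} = \tfrac{1}{2}\log(2\pi e) - \log(d) + o(1).
\]
Combining, the log-volume satisfies
\[
\frac{1}{d^2} \chi_R(X \sqcup Y; M, d, \epsilon) \leq \log(R\sqrt{d}) + \log\bigl(2\sqrt{\epsilon d}\bigr) + \log(2\pi e) - 2\log(d) + o(1).
\]
Adding the normalizing term $\tfrac{1}{2}(n+m)\log(d) = \log(d)$ and taking $\limsup_{d \to \infty}$, the $\log(d)$ terms cancel and one obtains
\[
\chi_R(X \sqcup Y; M, \epsilon) \leq \log R + \log 2 + \tfrac{1}{2}\log(\epsilon) + \log(2\pi e).
\]

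Finally I would let $\epsilon \downarrow 0$: the $\tfrac{1}{2}\log(\epsilon)$ term sends the right-hand side to $-\infty$, so $\chi_R(X \sqcup Y) = -\infty$ for every $R$, and hence $\chi(X \sqcup Y) = -\infty$. The argument is essentially a volume computation, and the main thing to be careful about is keeping the powers of $d$ straight: the slab has one ``free'' copy of $\M_d^{\sa}$ (contributing $\tfrac{1}{2} \log d$ worth of normalization) rather than two, and the missing normalization is exactly what produces the $-\infty$. Note the argument uses only the second-order moments, so the same reasoning will apply to any bi-partite pair exhibiting the analogous degeneracy $\varphi((aX+bY)^2) = 0$ with $(a,b) \neq (0,0)$.
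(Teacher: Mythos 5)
Your proof is correct, and it takes a genuinely different route from the paper's. The paper works with the set $\Psi$ where $\tau_d(A^2), \tau_d(B^2), \tau_d(AB)$ are all $\epsilon$-close to $1$, and then bounds the \emph{angle} between $A$ and $B$ in $\bR^{d^2}$: $\cos\theta_{A,B} \geq \frac{1-\epsilon}{1+\epsilon}$, so $B$ must lie in a thin \emph{cone} from the origin in the direction of $A$. The measure of $\Psi$ is then bounded by (ball for $A$) $\times$ (cone for $B$), and the cone volume is what produces the fatal $\frac{1}{2}\log\epsilon$. Your approach instead isolates the algebraic identity $\tau_d((A-B)^2) < 4\epsilon$ and applies the measure-preserving shear $(A,B)\mapsto(A,B-A)$, reducing everything to a product of two balls whose radii are trivially controlled. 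The two arguments exploit the same second-order degeneracy, but yours avoids the cone geometry entirely: you never need the cone-volume formula or the $\cos/\tan$ manipulation, and the Jacobian-one shear replaces the paper's conditioning on the direction of $A$. The final constants ($\chi_R(X\sqcup Y; M, \epsilon) \leq \log R + \log 2 + \log(2\pi e) + \frac12\log\epsilon$, hence $-\infty$ as $\epsilon\downarrow 0$) check out against the paper's. Your closing remark is also right: since only moments of order $\leq 2$ enter, the argument applies verbatim whenever a centred left variable and a centred right variable have a vanishing $L^2$-distance in distribution, which is precisely how the lemma is deployed in Theorem~\ref{thm:micro-LD}.
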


\begin{proof}
	Fix $R > \max\set{\norm{X}, \norm{Y}}$.
	Notice that it suffices to show that $\chi_R(X\sqcup Y; 2, \epsilon) \to -\infty$ as $\epsilon \to 0$.
	Towards this end, notice that for any $1 > \epsilon > 0$ and $d \in \bN$,
	\[
		\Gamma_R(X\sqcup Y; 2, d, \epsilon)
		\subseteq \set{(A, B) \in (M_d^{\sa})^2 \,\mid\, \tau_d(A^2), \tau_d(B^2), \tau(AB) \in (1-\epsilon, 1+\epsilon)}.
	\]
	Recall, however, that the Lebesgue measure used is normalized based on the inner product given by the \emph{unnormalized} trace:
	\[
		\langle A, B\rangle_{\M_d^{\sa}} = d \tau_d(B^*A) = \Tr_d(B^*A) = \langle A, B \rangle_{\bR^{d^2}}.
	\]
	The three conditions on the set above then become $\norm{A}^2_{\bR^{d^2}}, \norm{B}^2_{\bR^{d^2}}, \ang{A, B}_{\bR^{d^2}} \in (d(1-\epsilon), d(1+\epsilon))$.
	These restrictions allow us to deduce a bound on the angle $\theta_{A,B}$ between any $A$ and $B$ in the set:
	\[\cos\theta_{A, B} = \frac{\ang{A, B}}{\norm{A}\norm{B}} \geq \frac{1-\epsilon}{1+\epsilon}
		\qquad\text{whence}\qquad
	\tan\theta_{A,B} = \frac{\sqrt{1-\cos^2\theta_{A,B}}}{\cos\theta_{A,B}} \leq \frac{\sqrt{1 - \paren{\frac{1-\epsilon}{1+\epsilon}}^2}}{\frac{1-\epsilon}{1+\epsilon}} = \frac{2\sqrt\epsilon}{1-\epsilon}.\]
	Consequently $B$ must lie in the cone from the origin in the direction of $A$ with height $\sqrt{d(1+\epsilon)}$ and radius at its base $\sqrt{d(1+\epsilon)}\frac{2\sqrt\epsilon}{1-\epsilon}.$
	Letting $C\paren{A,d,\epsilon}$ represent this cone, we have
	\[
		\Gamma_R(X\sqcup Y; 2,d,\epsilon) \subseteq \set{(A, B) \in \bR^{2d^2} \,\mid\, \norm{A}^2 \leq d(1+\epsilon), B \in C\paren{A, d, \epsilon}}.
	\]
	Since volume of the cone $C\paren{A, d, \epsilon}$ does not depend on $A$, the volume of the set on the right hand side is the product of that of the ball $\mathcal{B}\paren{d^2, \sqrt{d(1+\epsilon)}}$ of radius $\sqrt{d(1+\epsilon)}$ in dimension $d^2$, and that of any cone $C\paren{A, d, \epsilon}$.
	Fortunately, both volumes are known:
	\begin{align*}
		\lambda_{d^2}\paren{\mathcal{B}\paren{d^2, \sqrt{d(1+\epsilon)}}} &= \frac{\pi^{\frac{d^2}2}}{\Gamma\paren{\frac{d^2}{2}+1}}\paren{d(1+\epsilon)}^{\frac{d^2}{2}}, \qand \\
		\lambda_{d^2}\paren{C\paren{A, d, \epsilon}}
		&= \frac1{d^2}\paren{\sqrt{d(1+\epsilon)}}\lambda_{d^2-1}\paren{\mathcal{B}\paren{d^2-1, \sqrt{d(1+\epsilon)}\frac{2\sqrt\epsilon}{1-\epsilon}}} \\
		&= \frac1{d^2}\paren{\sqrt{d(1+\epsilon)}}\paren{\frac{\pi^{\frac{d^2-1}{2}}}{\Gamma\paren{\frac{d^2-1}2+1}}\paren{\sqrt{d(1+\epsilon)}\frac{2\sqrt\epsilon}{1-\epsilon}}^{d^2-1}} \\
		&= \frac1{d^2}\paren{d(1+\epsilon)}^{\frac{d^2}2}\paren{\frac{\pi^{\frac{d^2-1}{2}}}{\Gamma\paren{\frac{d^2-1}2+1}}\paren{\frac{2\sqrt\epsilon}{1-\epsilon}}^{d^2-1}}.
	\end{align*}

	We now recall that Stirling's formula allows us to make the estimate that for large $z>0$, $\frac1z\log\Gamma(z) = \log z + \mathcal{O}(1)$.
	This allows us to make the following estimate:
	\begin{align*}
		\frac1{d^2}\chi_R(X\sqcup Y; 2,d,\epsilon)
		&\leq \log(d) - \frac1{d^2}\log\Gamma\paren{\frac{d^2}2+1} - \frac1{d^2}\log\Gamma\paren{\frac{d^2-1}2+1} + \frac{d^2-1}{2d^2}\log\epsilon 
		+ \mathcal{O}_{d,\epsilon}\paren{1} \\
		&= -\log(d) + \frac{d^2-1}{2d^2}\log\epsilon + \mathcal{O}_{d,\epsilon}\paren{1}.
	\end{align*}
	Thus $\chi_R(X\sqcup Y; 2, \epsilon) \leq \frac12\log\epsilon + \mathcal{O}_\epsilon\paren{1}$ so sending $\epsilon \to 0$ yields $\chi(X\sqcup Y) = -\infty$.
\end{proof}

Using the above, we can prove the following which, when combined with Corollary \ref{cor:micro-linear-transformations}, completely determines the microstate bi-free entropy of a tracially bi-partite system with a linear dependence in distribution.

\begin{thm}
\label{thm:micro-LD}
Let $(\{X_i\}^n_{i=1}, \{Y_j\}^m_{j=1})$ be a tracially bi-partite system in  a C$^*$-non-commutative probability space $(\A, \varphi)$.  If there exists an $X \in \mathrm{span}\{X_1, \ldots, X_n\}$ and a $Y \in \mathrm{span}\{Y_1, \ldots, Y_n\}$ such that $1 = \varphi(X^2) = \varphi(XY) = \varphi(Y^2)$ (e.g. $X_1, \ldots, X_n$ linearly independent, $Y_1, \ldots, Y_m$ linearly independent, yet $X_1, \ldots, X_n, Y_1, \ldots, Y_m$ linearly dependent in distribution), then
\[
\chi( \smbfe ) = -\infty.
\]
\end{thm}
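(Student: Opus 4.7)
The plan is to reduce to Lemma \ref{lem:micro-neg-infinity-if-repeat-operator} via subadditivity and invertible linear changes of variables. Write $X = \sum_{i=1}^n a_i X_i$ and $Y = \sum_{j=1}^m b_j Y_j$. Since $\varphi(X^2) = \varphi(Y^2) = 1$, at least one $a_i$ and one $b_j$ is nonzero; permuting indices, assume $a_1, b_1 \neq 0$. The linear map on the left variables sending $X_1 \mapsto X$ and fixing $X_2, \ldots, X_n$ has determinant $a_1 \neq 0$, so is invertible; analogously on the right. By Corollary \ref{cor:micro-linear-transformations}(2), the microstate bi-free entropies of the original system and of $(X, X_2, \ldots, X_n \sqcup Y, Y_2, \ldots, Y_m)$ differ by a finite additive constant, so it suffices to show that the latter quantity equals $-\infty$.

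Next, apply Proposition \ref{prop:micro-subadditive} with $p=q=1$ to obtain
\[
\chi(X, X_2, \ldots, X_n \sqcup Y, Y_2, \ldots, Y_m) \leq \chi(X \sqcup Y) + \chi(X_2, \ldots, X_n \sqcup Y_2, \ldots, Y_m).
\]
The second term is finite by Proposition \ref{prop:not-plus-infinity}, so the problem collapses to showing $\chi(X \sqcup Y) = -\infty$.

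To apply Lemma \ref{lem:micro-neg-infinity-if-repeat-operator}, first center. Observe that $\varphi((X-Y)^2) = \varphi(X^2) - 2\varphi(XY) + \varphi(Y^2) = 0$, and Cauchy--Schwarz in the 2-seminorm forces $\varphi(X) = \varphi(Y) =: c$; positivity of $\varphi$ gives $c^2 \leq \varphi(X^2) = 1$. In the generic case $|c| < 1$, define $X' = (X - c)/\sqrt{1-c^2}$ and $Y' = (Y - c)/\sqrt{1-c^2}$. A direct computation shows $\varphi(X') = \varphi(Y') = 0$ and $\varphi((X')^2) = \varphi((Y')^2) = \varphi(X'Y') = 1$, so Lemma \ref{lem:micro-neg-infinity-if-repeat-operator} gives $\chi(X' \sqcup Y') = -\infty$. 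Parts (1) and (2) of Corollary \ref{cor:micro-linear-transformations} propagate this equality back to $\chi(X\sqcup Y) = -\infty$ (the transformation is affine with nonzero determinant).

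The main obstacle is the boundary case $|c| = 1$, where the scaling step above degenerates. Here $\varphi((X - cI)^2) = \varphi((Y - cI)^2) = 0$, so any $(A, B) \in \Gamma_R(X \sqcup Y; 2, d, \epsilon)$ satisfies $\|A - cI\|_{\mathrm{HS}}^2, \|B - cI\|_{\mathrm{HS}}^2 \leq 3d\epsilon$; the microstates lie in a Euclidean ball of radius $O(\sqrt{d\epsilon})$ about $(cI, cI)$ in ambient dimension $2d^2$. A Stirling estimate of this ball's volume, analogous to the one in the proof of Lemma \ref{lem:micro-neg-infinity-if-repeat-operator}, shows $\chi_R(X \sqcup Y; 2, \epsilon) = \tfrac12 \log \epsilon + \mathcal{O}_\epsilon(1)$, so $\chi(X \sqcup Y) = -\infty$. (Alternatively, one can observe that the argument in Lemma \ref{lem:micro-neg-infinity-if-repeat-operator} nowhere uses the hypothesis $\varphi(X) = \varphi(Y) = 0$; only the three second-moment conditions are invoked in the cone estimate, so that lemma applies verbatim and this boundary case requires no separate treatment.)
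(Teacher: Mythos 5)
Your proof is correct and follows essentially the same skeleton as the paper's: reduce by an invertible linear change of variables (Corollary \ref{cor:micro-linear-transformations}(2)) and subadditivity (Proposition \ref{prop:micro-subadditive}) to the two-variable problem $\chi(X \sqcup Y)$, then invoke Lemma \ref{lem:micro-neg-infinity-if-repeat-operator}. You do, however, address a subtlety the paper elides: the lemma as stated assumes $\varphi(X) = \varphi(Y) = 0$, which the theorem's hypothesis does not guarantee, yet the paper applies the lemma directly. Your parenthetical observation is the right way to resolve this: the lemma's proof only ever uses the constraints coming from $\tau_d(A^2), \tau_d(B^2), \tau_d(AB)$ and never the first-moment constraints, so the centering hypothesis is extraneous and the lemma applies verbatim. (The case analysis you run before that observation is therefore unnecessary, and also has a harmless constant error in the degenerate case --- a ball of radius $O(\sqrt{d\epsilon})$ in $\bR^{2d^2}$ contributes $\log\epsilon$, not $\tfrac12\log\epsilon$, to $\chi_R$; either way the quantity diverges to $-\infty$.) A further small difference: because you work with an explicit invertible triangular change of variables rather than with bases, you avoid the separate treatment the paper gives to the case where $\{X_i\}$ or $\{Y_j\}$ are themselves linearly dependent. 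Both differences make your argument slightly cleaner and more complete than the paper's, while being the same basic strategy.
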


\begin{proof}
If $X_1, \ldots, X_n$ or $Y_1, \ldots, Y_n$ are linearly dependent, then the result follows from Corollary \ref{cor:micro-linear-transformations}.  Otherwise there exists an $i \in \{1,\ldots, n\}$ and a $j \in \{1,\ldots, m\}$ such that $\{X_1, \ldots, X_n\}$ and $\{X_1, \ldots, X_{i-1}, X, X_{i+1}, \ldots, X_n\}$  are bases for the same subspace of $\A$, and $\{Y_1, \ldots, Y_m\}$ and $\{Y_1, \ldots, Y_{j-1}, Y, Y_{j+1}, \ldots, Y_m\}$  are bases for the same subspace of $\A$.  By Corollary \ref{cor:micro-linear-transformations} there exists a $C \in \bR$ such that
\begin{align*}
\chi( \smbfe ) = C + \chi(X_1, \ldots, X_{i-1}, X, X_{i+1}, \ldots, X_n \sqcup Y_1, \ldots, Y_{j-1}, Y, Y_{j+1}, \ldots, Y_m).
\end{align*}
As
\begin{align*}
\chi(X_1,& \ldots, X_{i-1}, X, X_{i+1}, \ldots, X_n \sqcup Y_1, \ldots, Y_{j-1}, Y, Y_{j+1}, \ldots, Y_m) \\
& \leq \chi(X \sqcup Y) + \chi(X_1, \ldots, X_{i-1}, X_{i+1}, \ldots, X_n \sqcup Y_1, \ldots, Y_{j-1}, Y_{j+1}, \ldots, Y_m)
\end{align*}
by Proposition \ref{prop:micro-subadditive} and as $\chi( X_1, \ldots, X_{i-1}, X_{i+1}, \ldots, X_n \sqcup Y_1, \ldots, Y_{j-1}, Y_{j+1}, \ldots, Y_m) < \infty$ by Proposition \ref{prop:not-plus-infinity}, Lemma \ref{lem:micro-neg-infinity-if-repeat-operator} yields $\chi(X \sqcup Y) = -\infty$ and the result.
\end{proof}

Next we investigate the bi-free entropy of bi-free central limit distributions.
Since we are only able to apply transformations to the left variables and the right variables separately, we cannot directly remove correlations between left and right semicircular variables.
We therefore start with the case of two variables.

\begin{thm}
\label{thm:entropy-pair-of-semis}
Let $(\A, \varphi)$ be a C$^*$-non-commutative probability space and let $(S_\ell, S_r)$ be a centred, self-adjoint bi-free central limit distribution in $\A$ in which each variable is of variance one.
If $c= \varphi(S_\ell S_r) \in [-1, 1]$, then
\[
\chi(S_\ell \sqcup S_r) = \log(2 \pi e) + \frac{1}{2}\log(1-c^2).
\]
Furthermore, the $\limsup_{d \to \infty}$ when computing $\chi(S_\ell \sqcup S_r)$ is actually a $\lim_{d \to \infty}$.
\end{thm}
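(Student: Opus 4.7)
The plan is to prove the equality by establishing matching upper and lower bounds, both attained as genuine limits in $d$ (which will also establish the final sentence about $\limsup$). The boundary case $|c|=1$ is handled by Theorem~\ref{thm:micro-LD}: taking $X = S_\ell$ and $Y = \pm S_r$ one has $\varphi(X^2) = \varphi(Y^2) = \varphi(XY) = 1$, whence $\chi(S_\ell \sqcup S_r) = -\infty$, matching the convention $\tfrac12\log(0) = -\infty$. Assume now $|c| < 1$. The lower bound is essentially furnished by Example~\ref{exam:changing-semis-to-free}: the pair $(S_\ell, S_r)$ and the Fock-space pair $(S_1, D_2)$ are both centered bi-free central limits with marginal variance $1$ and cross-covariance $c$, hence have the same joint distribution; since microstate bi-free entropy depends only on joint moments, combining Theorem~\ref{thm:micro-converting-rights-to-lefts}, Proposition~\ref{prop:transforms}, and Voiculescu's formula $\chi(S_1) = \tfrac12\log(2\pi e)$ from \cite{V1994} gives
\[
\chi(S_\ell\sqcup S_r) \;=\; \chi(S_1\sqcup D_2) \;\geq\; 2\chi(S_1) + \tfrac12\log(1-c^2) \;=\; \log(2\pi e) + \tfrac12\log(1-c^2).
\]

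For the matching upper bound, by Remark~\ref{rem:only-a-certain-order-of-microstates-matters} any $(A,B)\in\Gamma_R(S_\ell\sqcup S_r; 2, d, \epsilon)$ must satisfy $|\tau_d(A^2)-1|,\,|\tau_d(B^2)-1|,\,|\tau_d(AB)-c|<\epsilon$, so $\Gamma_R$ is contained in the explicit set
\[
\Sigma_d(\epsilon) \;=\; \set{(A,B)\in(\M_d^{\sa})^2 : \|A\|^2,\,\|B\|^2 \leq d(1+\epsilon),\; |\langle A,B\rangle - dc| \leq d\epsilon},
\]
where $\|\cdot\|$ and $\langle\cdot,\cdot\rangle$ come from the unnormalized-trace inner product on $\M_d^{\sa}\cong\bR^{d^2}$. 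For each fixed $A$ with $\|A\|=r$, decompose $B = \beta A/r + B_\perp$: then $\beta$ varies over an interval of length $2d\epsilon/r$, and $B_\perp$ ranges over a $(d^2-1)$-ball of squared radius $\|B\|^2 - \beta^2 \leq d(1-c^2) + O(\epsilon)$ on the region $r^2\approx d$ that carries essentially all the $A$-mass. Using the Euclidean ball volume $V_n(\rho) = \pi^{n/2}\rho^n/\Gamma(n/2+1)$ together with Stirling's formula $\log\Gamma(n/2+1) = (n/2)\log(n/2) - n/2 + O(\log n)$, one evaluates the resulting iterated integral to obtain
\[
\tfrac{1}{d^2}\log\lambda_{d,2}(\Sigma_d(\epsilon)) + \log d \;\longrightarrow\; \log(2\pi e) + \tfrac12\log(1-c^2) + O(\epsilon)
\]
as a genuine limit in $d$; sending $\epsilon\downarrow 0$ (and then $R,M$ to their appropriate limits) completes the upper bound.

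For the final assertion that $\limsup_{d\to\infty}$ is actually a limit, observe that the $\Sigma_d(\epsilon)$ computation produces a lim-upper bound, while the Example~\ref{exam:changing-semis-to-free} lower bound rests on Voiculescu's free-entropy formula $\chi(S_1,S_2) = \log(2\pi e) + \tfrac12\log(1-c^2)$, which is itself attained as a limit via concentration for the GUE random matrix model (so the containment $\Gamma_R(S_1, S_2; M, d, \epsilon) \subseteq \Gamma_R(S_\ell\sqcup S_r; M, d, \epsilon)$ gives a lim-lower bound). Sandwiched between two explicit limits that match after $\epsilon\downarrow 0$, the quantity $\tfrac{1}{d^2}\log\lambda_{d,2}(\Gamma_R) + \log d$ itself converges in $d$. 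The main technical hurdle is the Stirling calculation for $\lambda_{d,2}(\Sigma_d(\epsilon))$, where one must verify that the outer $A$-integration concentrates on $\|A\|^2\approx d$ so that the effective radius of the $B_\perp$-ball produces the clean $(1-c^2)$ factor rather than a cruder $\epsilon$-dependent surrogate.
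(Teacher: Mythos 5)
Your proposal is correct and follows the same overall strategy as the paper: establish the lower bound via Example~\ref{exam:changing-semis-to-free} and Theorem~\ref{thm:micro-converting-rights-to-lefts} (which gives a $\liminf$ bound since the free-entropy identities involved are all $\liminf$-compatible), and establish the upper bound by bounding the Lebesgue measure of a second-moment constraint set in $(\M_d^{\sa})^2 \cong (\bR^{d^2})^2$ via Stirling asymptotics. Where you diverge is in the volume computation: the paper applies the linear change of variables $(A_1, A_2) \mapsto (A_1, -\tfrac{c}{\sqrt{1-c^2}}A_1 + \tfrac{1}{\sqrt{1-c^2}}A_2)$, which has Jacobian $(1-c^2)^{-d^2/2}$ and maps the constraint region into a product of two balls of comparable radius, giving the factor $(1-c^2)^{d^2/2}$ cleanly and without any case analysis; you instead fix $A$, split $B$ into components parallel and orthogonal to $A$, and integrate.

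Your route works, but your closing worry about ``verifying that the outer $A$-integration concentrates on $\|A\|^2 \approx d$'' is not actually a hurdle for the upper bound. Assuming $c>0$, for every $A$ with $\|A\| = r \le \sqrt{d(1+\epsilon)}$ the parallel component satisfies $|\beta| \ge d(c-\epsilon)/r \ge \sqrt{d}(c-\epsilon)/\sqrt{1+\epsilon}$, so $\|B_\perp\|^2 \le d(1+\epsilon) - d(c-\epsilon)^2/(1+\epsilon)$ uniformly in $A$; this $\epsilon$-dependent radius already tends to $d(1-c^2)$ as $\epsilon \downarrow 0$, which is exactly what the $\inf_\epsilon$ in the definition of $\chi$ requires. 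No concentration statement about where the $A$-mass sits is needed. (The only place care is genuinely required is the $1/r$ factor from the slab width $2d\epsilon/r$, but $\int_{\|A\| \le \sqrt{d(1+\epsilon)}} \|A\|^{-1}\, dA$ is finite in dimension $d^2 \ge 2$ and its logarithm is $O(1)$ per $d^2$, so it contributes nothing after dividing by $d^2$.) The paper's change-of-variables approach avoids this bookkeeping entirely and is the cleaner of the two, but both yield the stated limit.
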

\begin{proof}
By Example \ref{exam:changing-semis-to-free}, we see that
\[
\chi(S_\ell \sqcup S_r) \geq \log(2 \pi e) + \frac{1}{2}\log(1-c^2)
\]
as the free entropy of a single semicircular operator of variance one is $\frac{1}{2}\log(2 \pi e)$.
Furthermore, we claim this inequality holds if we use the $\liminf_{d \to \infty}$ in place of $\limsup_{d \to \infty}$ for $\chi(S_\ell \sqcup S_r)$.  To see this, notice Example \ref{exam:changing-semis-to-free} holds for the $\liminf_{d \to \infty}$ version since both \cite{V1993}*{Proposition 3.5 and Proposition 5.4}  and Theorem \ref{thm:micro-converting-rights-to-lefts} do as well.  Therefore, since the free entropy of a single semicircular operator agrees with the $\liminf_{d \to \infty}$ variety, the claim is complete.

For the other direction, we will apply some volume arguments.  Note the case $c = \pm 1$ follows from Lemma \ref{lem:micro-neg-infinity-if-repeat-operator}, so we will assume $|c| < 1$.

For each $R > 2$ and $M \in \bN$ with $M \geq 2$, notice that $\Gamma_R(S_\ell \sqcup S_r ; M, d, \epsilon)$ is contained in
\[
\Psi := \left\{(A_1, A_2) \in (\M_d^{\sa})^2 \, \left| \,  1- \epsilon \leq \tau_d(A_k^2) \leq 1 + \epsilon, c- \epsilon \leq \tau_d(A_1A_2) \leq c + \epsilon \right. \right\}.
\]
We desire an estimate on the Lebesgue measure of $\Psi$. 

Recall we view $(\M_d^{\sa})^2 \cong (\bR^{d^2})^2$ as Hilbert spaces where for $A, B \in \M_d^{\sa}$ we have 
\[
\langle A, B\rangle_{\M_d^{\sa}} = d \tau_d(B^*A) = \Tr(B^*A) = \langle A, B \rangle_{\bR^{d^2}}.
\]
Hence
\[
\Psi \cong \left\{(A_1, A_2) \in (\bR^{d^2})^2 \, \left| \,  \sqrt{d(1-\epsilon)} \leq \left\|A_k\right\|_2 \leq \sqrt{d(1+\epsilon)}, d(c-\epsilon) \leq \langle A_1, A_2\rangle_{\bR^{d^2}} \leq d(c+\epsilon)\right. \right\}.
\]

Consider the map $\Theta : (\bR^{d^2})^2 \to (\bR^{d^2})^2$ defined by
\[
\Theta(A_1, A_2) = \left(A_1, -\frac{c}{\sqrt{1-c^2}} A_1 + \frac{1}{\sqrt{1-c^2}} A_2\right).
\]
Clearly $\Theta$ is a direct sum of $d^2$ copies of the matrix
\[
Q = \begin{bmatrix}
1 & 0 \\
-\frac{c}{\sqrt{1-c^2}} & \frac{1}{\sqrt{1-c^2}}
\end{bmatrix}
\]
via a specific choice of orthonormal basis of $\bR^{d^2}$.  
Hence the Jacobian of $\Theta$ is also a direct sum of $d^2$ copies of $Q$ and thus
\[
\mathrm{Vol}(\Psi) = \frac{1}{\det(\J(\Theta))} \mathrm{Vol}(\Theta(\Psi)) = (1-c^2)^{\frac{d^2}{2}} \mathrm{Vol}(\Theta(\Psi)).
\]

To obtain an upper bound for the volume of $\Theta(\Psi)$, we claim that
\[
\Theta(\Psi) \subseteq \left\{(B_1, B_2) \in (\bR^{d^2})^2 \, \left| \, \left\|B_k\right\|_2 \leq \sqrt{d \left(1 + \epsilon \frac{(1+|c|)^2}{1-c^2}\right)}\right. \right\}.
\]
To see this, fix $(A_1, A_2) \in \Psi$ and let $(B_1, B_2) = \Theta(A_1, A_2)$.  Then $B_1 = A_1$ so 
\[
\left\|B_1\right\|_2 \leq \sqrt{d(1 + \epsilon)} \leq \sqrt{d \left(1 + \epsilon \frac{(1+|c|)^2}{1-c^2}\right)}.
\]
Next notice that
\begin{align*}
\left\|B_2\right\|_2^2 &= \left\langle -\frac{c}{\sqrt{1-c^2}} A_1 + \frac{1}{\sqrt{1-c^2}} A_2, -\frac{c}{\sqrt{1-c^2}} A_1 + \frac{1}{\sqrt{1-c^2}} A_2\right\rangle_{\bR^{d^2}} \\
&= \frac{1}{1-c^2} \left(c^2 \langle A_1, A_1 \rangle - 2 c \langle A_1, A_2\rangle + \langle A_2, A_2\rangle    \right) \\
&\leq \frac{1}{1-c^2} \left( d (1+\epsilon) c^2 - 2d c^2 + 2d|c| \epsilon + d(1+\epsilon)\right) \\
&= \frac{d}{1-c^2} \left((1-c^2) + \epsilon(1 + 2|c| + c^2)\right) \\
&= d \left(1 + \epsilon \frac{(1+|c|)^2}{1-c^2}\right).
\end{align*}
Hence the claim is complete.

Using the above and the fact that $\Theta(\Psi)$ is contained in the product of two $d^2$-dimensional balls of radius $\sqrt{ d \left( 1 + \epsilon \frac{(1+|c|)^2}{1-c^2}\right)}$, we obtain that
\begin{align*}
&\lambda_{d, 2}(\Gamma_R(S_\ell \sqcup S_r ; M, d, \epsilon)) \\
& \leq  \mathrm{Vol}(\Psi)\\
& \leq (1-c^2)^{\frac{d^2}{2}} \mathrm{Vol}(\Theta(\Psi)) \\
& \leq ((1-c^2)^{\frac{d^2}{2}}   \left( \frac{\pi^{\frac{d^2}{2}}}{\Gamma\left(\frac{d^2}{2} + 1\right)} \left(d \left(1 + \epsilon \frac{(1+|c|)^2}{1-c^2}\right)\right)^{\frac{d^2}{2}}  \right)^2.
\end{align*}
Hence, via an application of Stirling's formula, we obtain that
\begin{align*}
& \chi_R(S_\ell \sqcup S_r; M, \epsilon) \\
& \leq \limsup_{d \to \infty} \frac{1}{2}\log(1 - c^2) + \log(\pi) - 2\frac{1}{d^2} \log\left( \Gamma\left(\frac{d^2}{2} + 1\right)\right) + \log(d) + \log\left(1 + \epsilon \frac{(1+|c|)^2}{1-c^2}  \right) + \frac{2}{2}\log(d) \\
& \leq \limsup_{d \to \infty} \frac{1}{2}\log(1 - c^2) + \log(\pi) + \log(2e) + \log\left(1 + \epsilon \frac{(1+|c|)^2}{1-c^2}   \right).
\end{align*}
Therefore
\[
\chi(S_\ell \sqcup S_r) \leq \log(2\pi e) + \frac{1}{2}\log(1 - c^2)
\]
completing the claim.
\end{proof}

Combining all of the results of this paper, we obtain the following.

\begin{thm}
\label{thm:micro-entropy-all-central-limits}
Let $(\{S_k\}^{n}_{k=1}, \{S_k\}^{n+m}_{k=n+1})$ be a centred self-adjoint bi-free central limit distribution with respect to $\varphi$ with $\varphi(S^2_k) = 1$ for all $k$.  Recall that the joint distribution is completely determined by the positive matrix
\[
A = [a_{i,j}] = [\varphi(S_iS_j)] \in \M_n(\bR).
\]
Then
\[
\chi( S_1, \ldots, S_n \sqcup S_{n+1}, \ldots, S_{n+m} ) = \frac{n+m}{2} \log(2 \pi e) + \frac{1}{2}\log\left(\det(A)\right).
\]
\end{thm}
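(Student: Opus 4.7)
The strategy is to reduce the covariance matrix $A$ to diagonal cross-correlation form via transformations on the left and right separately, bound $\chi$ from above using Proposition~\ref{prop:micro-subadditive} together with Theorem~\ref{thm:entropy-pair-of-semis} on each resulting pair, and match this from below via Theorem~\ref{thm:micro-converting-rights-to-lefts} applied to a tracial free-semicircular realization of the bi-free central limit.

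First I would dispose of the degenerate case $\det(A) = 0$. Write $A_{LL}$ and $A_{RR}$ for the upper-left $n \times n$ and lower-right $m \times m$ blocks of $A$. If $\det(A_{LL}) = 0$ then $S_1, \ldots, S_n$ are linearly dependent, so Corollary~\ref{cor:micro-linear-transformations}(3) forces $\chi = -\infty$; the case $\det(A_{RR}) = 0$ is symmetric. If both diagonal blocks are invertible but $\det(A) = 0$, apply $A_{LL}^{-1/2} \oplus A_{RR}^{-1/2}$ via Corollary~\ref{cor:micro-linear-transformations}(2) followed by an SVD-rotation on the left and right; the resulting cross-block must then have a singular value equal to $1$, so the corresponding left-right pair of central limit variables satisfies the hypotheses of Theorem~\ref{thm:micro-LD}, forcing $\chi$ of the transformed (and hence the original) system to be $-\infty$. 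Both sides of the claimed identity thus equal $-\infty$, and I may henceforth assume $A > 0$.

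For the upper bound in the non-degenerate case, I would compose the normalization $A_{LL}^{-1/2} \oplus A_{RR}^{-1/2}$ with orthogonal left and right rotations provided by an SVD of the resulting cross-block. By Corollary~\ref{cor:micro-linear-transformations}(2) this changes $\chi$ by $-\tfrac{1}{2}\log(\det A_{LL} \det A_{RR})$ and produces a new bi-free central limit distribution with covariance $\tilde{A} = \begin{bmatrix} I_n & \Sigma \\ \Sigma^T & I_m \end{bmatrix}$, where $\Sigma$ is rectangular diagonal with singular values $\sigma_1, \ldots, \sigma_{\min(n, m)} \in [0,1)$ satisfying $\det \tilde{A} = \prod_i (1-\sigma_i^2)$. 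Iterating Proposition~\ref{prop:micro-subadditive} to partition the transformed variables into the $\min(n, m)$ left-right pairs of correlation $\sigma_i$ (each a bi-free central limit pair, of entropy $\log(2\pi e) + \tfrac{1}{2}\log(1-\sigma_i^2)$ by Theorem~\ref{thm:entropy-pair-of-semis}) together with the $|n-m|$ leftover single-sided standard semicircular variables (each of entropy $\tfrac{1}{2}\log(2\pi e)$), the transformed entropy is bounded above by $\tfrac{n+m}{2}\log(2\pi e) + \tfrac{1}{2}\log\det \tilde{A}$. Undoing the transformation correction and using the relation $\det A = \det \tilde{A} \cdot \det A_{LL} \det A_{RR}$ yields the desired upper bound.

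For the matching lower bound, I would invoke Theorem~\ref{thm:micro-converting-rights-to-lefts} together with a tracial realization furnished by the cumulant construction appearing in the proof of Lemma~\ref{lem:large-portion-good}: there exists a tracial family $(X'_1, \ldots, X'_n, Y'_1, \ldots, Y'_m)$ in some $(\mathcal{A}_0, \tau_0)$ satisfying the moment compatibility required by Theorem~\ref{thm:micro-converting-rights-to-lefts}. Since only the pair bi-free cumulants of the central limit are non-zero, this tracial family is itself a semicircular family whose covariance matrix agrees with $A$ after the symmetric permutation reversing the right indices, and hence has determinant $\det A$. Voiculescu's classical computation gives its free entropy as $\tfrac{n+m}{2}\log(2\pi e) + \tfrac{1}{2}\log\det A$, and Theorem~\ref{thm:micro-converting-rights-to-lefts} propagates this to a lower bound on the microstate bi-free entropy of the original system, matching the upper bound. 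The main technical obstacle is the bookkeeping around the multi-step transformations and the identification of the tracial model as a semicircular family; the analytical heart of the argument---a volume computation---has already been packaged into Theorem~\ref{thm:entropy-pair-of-semis}.
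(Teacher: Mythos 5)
Your proof is correct, and it takes a genuinely different route to the lower bound than the paper does. The paper reduces, via Gram--Schmidt plus an SVD, to a covariance matrix in which the system decouples into bi-free pairs $(S_k, S_{n+k})$ together with leftover uncorrelated semicirculars, and then invokes the additivity Theorem~\ref{thm:micro-bi-free-additive} --- which requires the $\limsup_{d\to\infty}$ to be a genuine limit for each pair, a fact verified in Theorem~\ref{thm:entropy-pair-of-semis} --- to obtain an exact equality in one stroke. You instead split the problem: the upper bound comes from iterating subadditivity (Proposition~\ref{prop:micro-subadditive}) against the pairs produced by essentially the same block-diagonalization, and the lower bound is obtained directly for the whole family via Theorem~\ref{thm:micro-converting-rights-to-lefts} applied to the tracial semicircular realization (the ``reverse-the-rights'' free model, which is jointly semicircular since only second-order bi-free cumulants survive). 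Your route bypasses the additivity theorem, hence also the asymptotic-freeness machinery of Lemma~\ref{lem:large-portion-good} and the $\limsup=\lim$ hypothesis, at the cost of needing to verify separately that the upper and lower bounds match; the paper's route is more structural but relies on heavier infrastructure. Both are valid.

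One small correction: you claim the tracial model's covariance matrix is $A$ conjugated by the permutation that reverses the right indices. Unwinding the condition $\tau_0(X'_{i_1}\cdots X'_{i_p}Y'_{j_q}\cdots Y'_{j_1}) = \varphi(X_{i_1}\cdots X_{i_p}Y_{j_1}\cdots Y_{j_q})$ at order two (using traciality of $\tau_0$ on the right block) shows the covariance is exactly $A$ in its original indexing --- the reversal only rearranges the product, not which index carries which variance. This does not affect your conclusion, since permutation conjugation preserves the determinant in any case, but the statement as written is inaccurate.
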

\begin{proof}
Note that if $A$ is not invertible then either $\{S_k\}^{n}_{k=1}$ are linearly dependent (in distribution), $\{S_k\}^{n+m}_{k=n+1}$ are linearly dependent (in distribution), or the hypotheses of Theorem \ref{thm:micro-LD} are satisfied.
Hence, by Corollary \ref{cor:micro-linear-transformations}, the result holds if $A$ is not invertible.
Thus we will suppose that $A$ is invertible.

Recall that we can view $(\{S_k\}^{n}_{k=1}, \{S_k\}^{n+m}_{k=n+1})$ as left and right semicircular operators acting on a real Fock space.
In particular for $k \in \{1,\ldots, n\}$ we can write
\[
S_k = l(e_k) + l^*(e_k)
\]
and for $k \in \{n+1, \ldots, n+m\}$ we can write
\[
S_k = r(e_k) + r^*(e_k)
\]
where $\{e_k\}^{n+m}_{k=1} \in \H$ are unit vectors.
Note 
\[
A = [\langle e_i, e_j\rangle]
\]
so we obtain that $\{e_k\}^{n+m}_{k=1}$ is linearly independent.

We now discuss how modifications to $\{e_k\}^n_{k=1}$ and modifications to $\{e_k\}^{n+m}_{k=n+1}$ modify the bi-free entropy and the covariance matrix.
Suppose $Q = [q_{i,j}] \in \M_n(\bR)$ and $R = [r_{i,j}] \in \M_m(\bR)$ are invertible.
If for each $k \in \{1,\ldots, n\}$ we define
\[
e'_k = \sum^n_{i=1} q_{k,i} e_i
\]
and for each $k \in \{n+1, \ldots, n+m\}$ we define
\[
e'_k = \sum^{m}_{j=1} r_{k,j} e_{j+n}
\]
then $\{e'_k\}^{n+m}_{k=1}$ is linearly independent, 
\begin{align*}
& \chi(l(e'_1) + l^*(e'_1), \ldots, l(e'_n) + l^*(e'_n) \sqcup r(e'_{n+1}) + r^*(e'_{n+1}), \ldots, r(e_{n+m}) + r^*(e_{n+m})) \\
&= \chi\left( \sum^n_{i=1} q_{1,i} S_i, \ldots, \sum^n_{i=1} q_{n,i} S_i \sqcup  \sum^{m}_{j=1} r_{1,j} S_{j+n}, \ldots, \sum^{m}_{j=1} r_{m,j} S_{j+n}\right) \\
&= \chi( S_1, \ldots, S_n \sqcup S_{n+1}, \ldots, S_{n+m} ) + \log(|\det(Q)|) + \log(|\det(R)|)
\end{align*}
by Corollary \ref{cor:micro-linear-transformations}, and
\[
[\langle e'_i, e'_j\rangle] = (Q \oplus R)[\langle e_i, e_j\rangle](Q \oplus R)^*.
\]
Thus
\[
\frac{1}{2}\log(\abs{\det([\langle e'_i, e'_j\rangle])}) =\frac{1}{2}\log(\abs{\det([\langle e_i, e_j\rangle])}) + \log(\abs{\det(Q)}) + \log(\abs{\det(R)}).
\]
Therefore, as both sides of the claimed formula
\[
\chi( S_1, \ldots, S_n \sqcup S_{n+1}, \ldots, S_{n+m} ) = \frac{n+m}{2} \log(2 \pi e) + \frac{1}{2}\log\left(\det(A)\right)
\]
are preserved under such operations, we will apply such operations until we arrive at a case we can deduce from previous results.

First, as applying the Gram-Schmidt Orthogonalization Process to $\{e_k\}^{n}_{k=1}$ and to $\{e_k\}^{n+m}_{k=n+1}$ produces such matrices $Q$ and $R$ due to linear independence of $\{e_k\}^{n+m}_{k=1}$, we may assume that $\{e_k\}^{n}_{k=1}$ is orthonormal and $\{e_k\}^{n+m}_{k=n+1}$ is orthonormal.  In this case
\[
A = \begin{bmatrix}
I_n & B \\ B^* & I_m
\end{bmatrix}
\]
where $B$ is an $n \times m$ matrix with real entries.
Let us assume that $m > n$ (the other case being similar). Whence then there are $m - n$ columns of $B$ that are linear combinations of the other $n$ columns of $B$.
Let $\{j_1, \ldots, j_n\}$ denote the indices of these other $n$ columns of $B$.
Notice since $\{e_k\}^{n+m}_{k=n+1}$ is linearly independent set of $m$ vectors that we can replace $e_k$ where $k \geq n+1$ and $k \neq j_q$ for all $q$ with $e_k - \sum^n_{q=1} c_{k,q} e_{j_q}$ (where the $c_{k,q}$ are chosen based on how column $k$ of $B$ is a linear combination of columns $j_1, \ldots, j_n$) so that $\{e_k\}^{n+m}_{k=n+1}$ remains a linearly independent set and so that $\langle e_k, e_p\rangle = 0$ for all $k \geq n+1$ with $k \neq j_q$ for all $q$, and all $p \leq n$.
Subsequently, if we apply the Gram-Schmidt Orthogonalization Process first to the modified $e_k$ for $k \neq j_q$ for all $q$, and then the remainder of the $e_k$, and if we then permute the order of the resulting vectors, the resulting change of basis matrix can then, with the above arguments, be used so that we may assume
\[
A = \begin{bmatrix}
I_n & C & 0_{n, m-n} \\ C^* & I_n & 0_{n, m-n} \\ 0_{m-n, n} & 0_{m-n, n} & I_{m-n}
\end{bmatrix}
\]
where $C$ is an $n \times n$ matrix with real entries.  

Recall, by the Singular Value Decomposition, we can write $C = UDV$ where $U, V \in \M_n(\bR)$ are unitary matrices and $D = \diag(d_1,\ldots, d_n)$ is a diagonal matrix.  By using $Q= U$ and $R = V^* \oplus I_{m-n}$, we reduce to the case where
\[
A = \begin{bmatrix}
I_n & D & 0_{n, m-n} \\ D^* & I_n & 0_{n, m-n} \\ 0_{m-n, n} & 0_{m-n, n} & I_{m-n}
\end{bmatrix}.
\]
Notice in this case that the determinant of $A$ is $\prod^n_{k=1} (1-d_k^2)$.  Furthermore, in this case, we obtain that
\[
(S_1, S_{n+1}), (S_2, S_{n+2}), \ldots, (S_n, S_{2n}), (I, S_{2n+1}), \ldots, (I, S_{n+m})
\]
are bi-freely independent.  Therefore, as pairs of semicirculars have finite-dimensional approximants and as Theorem \ref{thm:entropy-pair-of-semis} implies the $\limsup_{d \to \infty}$ for pairs of semicirculars is actually a limit, Theorem \ref{thm:micro-bi-free-additive} implies that
\[
\chi( S_1, \ldots, S_n \sqcup S_{n+1}, \ldots, S_{n+m} ) = \sum^n_{k=1} \chi(S_k \sqcup S_{k+n}) + \sum^{n+m}_{j=2n+1} \chi(S_j).
\]
As Theorem \ref{thm:entropy-pair-of-semis} implies that
\[
 \chi(S_k \sqcup S_{k+n}) = \log(2\pi e) + \frac{1}{2}\log(1-d_k^2),
\]
and as we know
\[
\chi(S_j) = \frac{1}{2} \log(2\pi e),
\]
we obtain that 
\begin{align*}
\chi( S_1, \ldots, S_n \sqcup S_{n+1}, \ldots, S_{n+m} ) &= \frac{n+m}{2} \log(2\pi e) + \frac{1}{2}\sum^n_{k=1} \log(1-d_k^2) \\
&= \frac{n+m}{2} \log(2\pi e) + \frac{1}{2}\log(|\det(A)|). \qedhere
\end{align*}
\end{proof}

\begin{rem}
Note that Theorem \ref{thm:micro-entropy-all-central-limits} includes the free case (i.e. when $m = 0$).  However, the proof for the microstate free entropy of free central limit distributions is substantially easier as one may apply transformations to all of the variables.  The bi-free proof is more difficult as Section \ref{sec:Trans} did not demonstrate the ability to mix left and right variables.   Still it is not surprising that we get the same result as the free case seeing as, asymptotically, almost all matrices are microstates for semicircular operators so it is simply a matter of angles.  One would expect other random variables which may have more complicated microstate sets could lead to different behaviours for which the above angle arguments would not apply. 
\end{rem}

\section{Microstate Bi-Free Entropy Dimension}
\label{sec:Entropy-Dimension}

For the sake of completeness, we briefly study the microstate bi-free entropy dimension.  Unfortunately, we do not know the correct bi-free generalizations of the known von Neumann algebra implications of free entropy dimension.

\begin{defn}
\label{defn:entropy-dimension}
Let $(\A, \varphi)$ be a C$^*$-non-commutative probability space and let $X_1, \ldots, X_n, Y_1, \ldots, Y_m$ be self-adjoint operators in $\A$.  The \emph{$n$-left, $m$-right, microstate bi-free entropy dimension} is defined by
\[
\delta(X_1, \ldots, X_n \sqcup Y_1, \ldots, Y_m) = n+m + \limsup_{\epsilon \to 0^+} \frac{\chi(X_1 + \sqrt\epsilon S_1, \ldots, X_n+ \sqrt\epsilon S_n \sqcup Y_1+ \sqrt\epsilon T_1, \ldots, Y_m+ \sqrt\epsilon T_m)}{|\log(\sqrt\epsilon)|}
\]
where $\{(S_i, I)\}^{n}_{i=1} \cup \{(I, T_j)\}^m_{j=1}$ is a bi-free central limit distribution of semicircular operators with variances 1 and covariances 0 that is bi-free from $(\{X_i\}^n_{i=1}, \{Y_j\}^m_{j=1})$.
\end{defn}

It is elementary to see based on bi-freeness that the self-adjoint operators $(\{X_i + \sqrt\epsilon S_i\}^n_{i=1}, \{Y_j + \sqrt\epsilon T_i\}^m_{j=1})$ still form a tracially bi-partite collection and thus $\delta(X_1, \ldots, X_n \sqcup Y_1, \ldots, Y_m)$ is well-defined.  In addition, a few basis properties of free entropy dimension carry-forward to the bi-free setting.

\begin{prop}
If $0 \leq p \leq n$ and $0 \leq q \leq m$ then
\begin{align*}
\delta(\smbfe) &\leq  \delta(X_1, \ldots, X_p\sqcup Y_1,\ldots Y_q) + \delta(X_{p+1}, \ldots, X_n\sqcup Y_{q+1},\ldots Y_m).
\end{align*}
In particular,
\[
\delta(\smbfe) \leq \delta(X_1, \ldots, X_n) + \delta(Y_1, \ldots, Y_m).
\]
\end{prop}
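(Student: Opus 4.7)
The plan is to reduce this to the subadditivity of microstate bi-free entropy (Proposition~\ref{prop:micro-subadditive}) applied to the perturbed variables, together with a routine manipulation of the $\limsup$ defining $\delta$.

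First I would observe that the quantity $\delta$ should not depend on the particular choice of bi-free semicircular family used, but only on the fact that such a family exists with the specified covariance structure; this follows because two bi-free central limit distributions with the same covariance matrix have the same joint distribution, so that any two choices of $\{(S_i,I)\}_{i=1}^n \cup \{(I,T_j)\}_{j=1}^m$ yield microstate bi-free entropies that coincide. With this in hand, pick once and for all a system $\{(S_i,I)\}_{i=1}^n \cup \{(I,T_j)\}_{j=1}^m$ that is a bi-free central limit distribution of the required covariance type and is bi-free from the whole collection $(\{X_i\}_{i=1}^n,\{Y_j\}_{j=1}^m)$. Then by associativity/consistency of bi-freeness, the subfamily $\{(S_i,I)\}_{i=1}^p \cup \{(I,T_j)\}_{j=1}^q$ is bi-free from $(\{X_i\}_{i=1}^p,\{Y_j\}_{j=1}^q)$ and similarly for the complementary subfamily, so these specific choices can legitimately be used to compute $\delta$ of the two summands on the right-hand side.

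Next, for each $\epsilon > 0$ apply Proposition~\ref{prop:micro-subadditive} to the tracially bi-partite tuple
\[
\bigl(X_1+\sqrt{\epsilon}S_1,\ldots,X_n+\sqrt{\epsilon}S_n \sqcup Y_1+\sqrt{\epsilon}T_1,\ldots,Y_m+\sqrt{\epsilon}T_m\bigr)
\]
split at indices $p$ and $q$. This yields the inequality
\begin{align*}
\chi\bigl(&X_1+\sqrt{\epsilon}S_1,\ldots,X_n+\sqrt{\epsilon}S_n \sqcup Y_1+\sqrt{\epsilon}T_1,\ldots,Y_m+\sqrt{\epsilon}T_m\bigr) \\
&\leq \chi\bigl(X_1+\sqrt{\epsilon}S_1,\ldots,X_p+\sqrt{\epsilon}S_p \sqcup Y_1+\sqrt{\epsilon}T_1,\ldots,Y_q+\sqrt{\epsilon}T_q\bigr) \\
&\quad + \chi\bigl(X_{p+1}+\sqrt{\epsilon}S_{p+1},\ldots,X_n+\sqrt{\epsilon}S_n \sqcup Y_{q+1}+\sqrt{\epsilon}T_{q+1},\ldots,Y_m+\sqrt{\epsilon}T_m\bigr).
\end{align*}

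Finally, divide through by $|\log(\sqrt{\epsilon})|$, add the identity $n+m = (p+q) + \bigl((n-p)+(m-q)\bigr)$ split between the two terms on the right, and take the $\limsup$ as $\epsilon \to 0^+$. Using the elementary fact that $\limsup(a(\epsilon)+b(\epsilon)) \leq \limsup a(\epsilon) + \limsup b(\epsilon)$, the left side becomes $\delta(\smbfe)$ while the right side becomes $\delta(X_1,\ldots,X_p \sqcup Y_1,\ldots,Y_q) + \delta(X_{p+1},\ldots,X_n \sqcup Y_{q+1},\ldots,Y_m)$. Setting $q=0$ and invoking Remark~\ref{rem:monobifree-is-free} (through the definition of $\delta$) then gives the displayed ``in particular'' statement. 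The only step requiring any care is the initial observation that $\delta$ is independent of the auxiliary semicircular family; once that is justified, the rest is a direct transcription of the standard free-probability argument.
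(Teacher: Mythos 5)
Your proof is correct and follows essentially the same route as the paper, which simply cites Definition~\ref{defn:entropy-dimension} and Proposition~\ref{prop:micro-subadditive} as immediately giving the result. Your preliminary observation that $\delta$ is well-defined independently of the choice of auxiliary semicircular family, and that the bi-free subfamily can be used to compute $\delta$ of the two subtuples, is a sensible and correct elaboration of a point the paper leaves tacit; the rest is, as you say, a direct transcription of the standard free-probability argument. (In the final line, to obtain the ``in particular'' statement you want $p = n$ and $q = 0$, not just $q = 0$, but the intent is clear.)
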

\begin{proof}
This result immediately follows from Definition \ref{defn:entropy-dimension} and Proposition \ref{prop:micro-subadditive}.
\end{proof}

\begin{prop}
Let $(\{X_i\}^n_{i=1}, \{Y_j\}^m_{j=1})$ be a tracially bi-partite system and let $\{(S_i, I)\}^{n}_{i=1} \cup \{(I, T_j)\}^m_{j=1}$ is a bi-free central limit distribution of semicircular operators with variances 1 and covariances 0 that is bi-free from $(\{X_i\}^n_{i=1}, \{Y_j\}^m_{j=1})$.  Suppose that for some $0 \leq p \leq n$ and $0 \leq q \leq m$ that
\[
(\alg(X_1, \ldots, X_p), \alg(Y_1, \ldots, Y_q)) \qqand (\alg(X_{p+1}, \ldots, X_{n}), \alg(Y_{q+1}, \ldots, Y_m))
\]
are bi-free and that 
\begin{align*}
&\{X_1 + \sqrt\epsilon S_1, \ldots, X_p+ \sqrt\epsilon S_p\} \sqcup \{ Y_1+ \sqrt\epsilon T_1, \ldots, Y_q+ \sqrt\epsilon T_q \} \qqand \\
&\{X_{p+1}+ \sqrt\epsilon S_{p+1}, \ldots, X_{n}+ \sqrt\epsilon S_n\} \sqcup\{Y_{q+1}+ \sqrt\epsilon T_{q+1}, \ldots, Y_m+ \sqrt\epsilon T_m\}
\end{align*}
have finite-dimensional approximants for all $\epsilon$. Then
\[
\delta( \smbfe) = \delta(X_1, \ldots, X_p \sqcup Y_1, \ldots, Y_q ) + \delta(X_{p+1}, \ldots, X_{n}\sqcup Y_{q+1}, \ldots, Y_m).
\]
\end{prop}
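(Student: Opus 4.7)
The plan is to apply Theorem~\ref{thm:micro-bi-free-additive} to the perturbed variables $X_k^{(\epsilon)} := X_k + \sqrt\epsilon S_k$ and $Y_l^{(\epsilon)} := Y_l + \sqrt\epsilon T_l$, after first verifying that the bi-free block structure is inherited by the perturbations. The inequality $\delta( \smbfe ) \leq \delta(X_1, \ldots, X_p \sqcup Y_1, \ldots, Y_q ) + \delta(X_{p+1}, \ldots, X_n \sqcup Y_{q+1}, \ldots, Y_m)$ is essentially free: applying subadditivity (Proposition~\ref{prop:micro-subadditive}) to the perturbed tuples for each $\epsilon > 0$, dividing by $|\log\sqrt\epsilon| > 0$ for small $\epsilon$, and using $\limsup(f+g) \leq \limsup f + \limsup g$ yields this bound once the terms $n+m = (p+q) + (n-p+m-q)$ are distributed appropriately.

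The main substance is the reverse inequality, for which I plan to establish that
\[
(\alg(X_k^{(\epsilon)}, Y_l^{(\epsilon)} : k\leq p, l\leq q)) \qand (\alg(X_k^{(\epsilon)}, Y_l^{(\epsilon)} : k>p, l>q))
\]
remain bi-free. I will assemble this from three pieces. First, the $(X,Y)$-blocks are bi-free by hypothesis. Second, since $\{(S_i,I)\}\cup\{(I,T_j)\}$ is a centred bi-free central limit distribution with zero covariances between different indices, the corresponding split of the $(S,T)$-family is bi-free. Third, the $(S,T)$-family is bi-free from the $(X,Y)$-family by construction in Definition~\ref{defn:entropy-dimension}. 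Putting these together via the vanishing of mixed bi-free cumulants across the $(A,B)$-partition of the union $\{X_k, Y_l, S_k, T_l\}$ produces the desired bi-freeness of the perturbed blocks, since each $X_k^{(\epsilon)}$ (respectively $Y_l^{(\epsilon)}$) lies in the algebra generated by variables with a single block-index.

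With bi-freeness of the perturbed blocks in hand, and the standing hypothesis that both perturbed sub-systems possess finite-dimensional approximants for every $\epsilon > 0$, Theorem~\ref{thm:micro-bi-free-additive} yields
\[
\chi(X_1^{(\epsilon)}, \ldots, X_n^{(\epsilon)} \sqcup Y_1^{(\epsilon)}, \ldots, Y_m^{(\epsilon)}) = \chi(A\text{-perturbed}) + \chi(B\text{-perturbed}).
\]
Dividing by $|\log\sqrt\epsilon|$ and taking $\limsup_{\epsilon \to 0^+}$ produces the inequality $\delta(\smbfe) \geq \delta(X_1, \ldots, X_p \sqcup Y_1, \ldots, Y_q ) + \delta(X_{p+1}, \ldots, X_n \sqcup Y_{q+1}, \ldots, Y_m)$ (here using that $\limsup(f+g) \geq \limsup f + \liminf g$ along sequences realising the appropriate behaviour, or by choosing the same $\epsilon$-sequence on all three terms).

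The main obstacle is the technical hypothesis in Theorem~\ref{thm:micro-bi-free-additive} that the $\limsup_{d\to\infty}$ in Definition~\ref{defn:micro-bi-free} be a genuine limit for each sub-system; this is not explicitly assumed in the proposition's hypotheses. There are two natural ways to handle this: one can either assume this regularity as an additional hypothesis (as is customary for such results in the free setting), or one can work throughout with the ultrafilter variants $\chi^\omega$ and a corresponding $\delta^\omega$, where Theorem~\ref{thm:micro-bi-free-additive} applies unconditionally and the argument above goes through verbatim. The remaining issue of interchanging $\limsup_{\epsilon\to 0^+}$ with equality of $\chi$'s is routine since the equality holds pointwise in $\epsilon$.
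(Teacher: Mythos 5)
Your approach matches the paper's one-line proof, which simply cites Theorem~\ref{thm:micro-bi-free-additive} and Definition~\ref{defn:entropy-dimension}; the step you make explicit --- that the perturbed blocks remain bi-free because all mixed bi-free cumulants across the four sub-families $(X_1,\ldots,X_p,Y_1,\ldots,Y_q)$, $(X_{p+1},\ldots,Y_m)$, $(S_1,\ldots,S_p,T_1,\ldots,T_q)$, $(S_{p+1},\ldots,T_m)$ vanish, so that any regrouping into pairs of faces is bi-free and each $X_k + \sqrt\epsilon S_k$, $Y_l + \sqrt\epsilon T_l$ lies in the algebra attached to a single block-index --- is precisely the content the paper leaves implicit, and your argument for it is correct.

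You are right to flag the limsup-as-limit hypothesis. Theorem~\ref{thm:micro-bi-free-additive} requires that the $\limsup_{d\to\infty}$ in Definition~\ref{defn:micro-bi-free} be a genuine limit for both perturbed sub-systems, but the proposition only grants finite-dimensional approximants; this gap is present in the paper's own statement and proof, and your suggested repairs (add the hypothesis, or pass to $\chi^\omega$ and a corresponding ultrafilter variant of $\delta$) are the natural ones. There is a second, subtler issue that you only partially acknowledge: from the pointwise-in-$\epsilon$ identity
\[
\chi\bigl(\text{full perturbed}\bigr) = \chi\bigl(\text{first block perturbed}\bigr) + \chi\bigl(\text{second block perturbed}\bigr),
\]
passing to $\delta$ requires splitting a $\limsup_{\epsilon\to 0^+}$ of a sum into a sum of $\limsup$s. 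Subadditivity of $\limsup$ yields the $\le$ direction (which you also get independently from Proposition~\ref{prop:micro-subadditive}), but the $\ge$ direction --- the one you actually need from the additivity theorem --- requires at least one of $\chi(\text{block perturbed})/|\log\sqrt\epsilon|$ to converge as $\epsilon \to 0^+$, or an analogous replacement of $\limsup_{\epsilon}$ by a limit along a fixed sequence. The paper's terse proof does not address this either; your write-up carries the same implicit caveat but is the more candid of the two.
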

\begin{proof}
This result immediately follows from Definition \ref{defn:entropy-dimension}, bi-freeness, and Theorem \ref{thm:micro-bi-free-additive}.
\end{proof}

What is most interesting about microstate bi-free entropy dimension is its value of bi-free central limit distributions.

\begin{thm}
Let $(\{S_k\}^{n}_{k=1}, \{S_k\}^{n+m}_{k=n+1})$ be a centred self-adjoint bi-free central limit distribution with respect to $\varphi$ with $\varphi(S^2_k) = 1$ for all $k$.  Recall that the joint distribution is completely determined by the positive matrix
\[
A = [a_{i,j}] = [\varphi(S_iS_j)] \in \M_n(\bR).
\]
Then
\[
\delta( S_1, \ldots, S_n \sqcup S_{n+1}, \ldots, S_{n+m} ) = \mathrm{rank}(A).
\]
\end{thm}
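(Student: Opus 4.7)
The plan is to reduce the dimension computation directly to the entropy formula already obtained in Theorem~\ref{thm:micro-entropy-all-central-limits}, by first identifying the perturbed system as another bi-free central limit distribution and then reading off the $\epsilon \to 0^+$ asymptotic of its determinant.

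First I would verify that the perturbation preserves the bi-free central limit structure and compute the new covariance matrix. Since $\{(S_i, I)\}_{i=1}^n \cup \{(I, T_j)\}_{j=1}^m$ is a centred bi-free central limit with covariance $I_{n+m}$, bi-free from $(\{S_k\}_{k=1}^n, \{S_k\}_{k=n+1}^{n+m})$, the additivity of bi-free cumulants guarantees that
\[
\left(\{S_k + \sqrt{\epsilon}\,S_k'\}_{k=1}^n,\ \{S_k + \sqrt{\epsilon}\,T_{k-n}'\}_{k=n+1}^{n+m}\right)
\]
(where $S_k' = S_k$ for $k \le n$ and $T_{k-n}'$ for $k > n$ denote the perturbing semicirculars) is again a centred bi-free central limit distribution. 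Using that all mean values vanish and using the bi-freeness between the original and the perturbing family, all cross covariances are zero, so the new covariance matrix is
\[
A_\epsilon = A + \epsilon I_{n+m}.
\]

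Next I apply Theorem~\ref{thm:micro-entropy-all-central-limits} to the perturbed system, which gives
\[
\chi(S_1 + \sqrt{\epsilon}S_1', \ldots \sqcup \ldots, S_{n+m} + \sqrt{\epsilon}T_m') = \tfrac{n+m}{2}\log(2\pi e) + \tfrac{1}{2}\log\det(A_\epsilon)
\]
for every $\epsilon > 0$ (noting that $A_\epsilon$ is strictly positive for $\epsilon > 0$, so we are in the invertible case of the theorem). Let $r = \mathrm{rank}(A)$ and let the eigenvalues of $A$ be $\lambda_1 \geq \cdots \geq \lambda_r > 0 = \lambda_{r+1} = \cdots = \lambda_{n+m}$. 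Then the eigenvalues of $A_\epsilon$ are $\lambda_i + \epsilon$, giving
\[
\log\det(A_\epsilon) = \sum_{i=1}^r \log(\lambda_i + \epsilon) + (n + m - r)\log\epsilon = (n+m-r)\log\epsilon + O(1)
\]
as $\epsilon \to 0^+$.

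Finally I compute the limit defining $\delta$:
\[
\frac{\chi(\cdot\sqcup\cdot)}{\abs{\log\sqrt{\epsilon}}}
= \frac{\tfrac{1}{2}(n+m-r)\log\epsilon + O(1)}{-\tfrac{1}{2}\log\epsilon}
\xrightarrow[\epsilon \to 0^+]{} -(n+m-r),
\]
so $\delta(S_1, \ldots, S_n \sqcup S_{n+1}, \ldots, S_{n+m}) = (n+m) + (r - n - m) = r = \mathrm{rank}(A)$. This argument is essentially a matter of bookkeeping once the covariance of the perturbed system is identified; the only substantive step is justifying that Theorem~\ref{thm:micro-entropy-all-central-limits} applies to the perturbed system, which reduces to the bi-free-central-limit structure being preserved under bi-free addition. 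No significant obstacle is expected.
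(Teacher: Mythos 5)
Your approach is essentially identical to the paper's: perturb, identify the perturbed system as a new bi-free central limit family with covariance $A + \epsilon I$, reduce to Theorem~\ref{thm:micro-entropy-all-central-limits}, and read off the $\epsilon\to 0^+$ asymptotic of $\det(A+\epsilon I)$ (you use the spectral decomposition, the paper uses the equivalent factorization $\epsilon^{\mathrm{nullity}(A)}p(\epsilon)$). The one place you glide past a step that the paper handles explicitly is the application of Theorem~\ref{thm:micro-entropy-all-central-limits}: as stated, that theorem requires $\varphi(S_k^2)=1$, whereas your perturbed variables $Z_{k,\epsilon}$ have variance $1+\epsilon$. The paper resolves this by rescaling via Corollary~\ref{cor:micro-linear-transformations}~(2) (dividing by $\sqrt{1+\epsilon}$, which has unit variances, then undoing the Jacobian factor), and one checks that the net effect is exactly the formula $\frac{n+m}{2}\log(2\pi e)+\frac12\log\det(\epsilon I + A)$ you quote. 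The formula you assert is indeed the correct scale-invariant extension, but strictly speaking it does not follow from a direct invocation of the theorem as stated; you should insert the one-line normalization argument. Everything else is correct.
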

\begin{proof}
Let $(\{T_k\}^{n}_{k=1}, \{T_k\}^{n+m}_{k=n+1})$ be a centred self-adjoint bi-free central limit distribution with respect to $\varphi$ with 
\[
\varphi(T_iT_j) = \begin{cases}
1 & \text{if }i = j \\
0 & \text{if }i \neq j
\end{cases}.
\]
If we define $Z_{k,\epsilon} = S_k + \sqrt\epsilon T_k$ for all $1 \leq k \leq n+m$, then $(\{Z_k\}^{n}_{k=1}, \{Z_k\}^{n+m}_{k=n+1})$ is a centred self-adjoint bi-free central limit distribution with respect to $\varphi$ with 
\[
\varphi(Z_{i,\epsilon}Z_{j,\epsilon}) = \begin{cases}
1 + \epsilon & \text{if }i = j \\
\varphi(S_iS_j) & \text{if }i \neq j
\end{cases}
\]
and
\[
\delta( S_1, \ldots, S_n \sqcup S_{n+1}, \ldots, S_{n+m} ) = n+m + \limsup_{\epsilon \to 0+} \frac{\chi(Z_{1,\epsilon}, \ldots, Z_{n,\epsilon} \sqcup Z_{n+1,\epsilon}, \ldots, Z_{n+m,\epsilon})}{|\log(\sqrt\epsilon)|}.
\]
By applying Corollary \ref{cor:micro-linear-transformations} and Theorem \ref{thm:micro-entropy-all-central-limits}, we see that
\begin{align*}
&\chi(Z_{1,\epsilon}, \ldots, Z_{n,\epsilon} \sqcup Z_{n+1,\epsilon}, \ldots, Z_{n+m,\epsilon})\\
&= (n+m) \log(\sqrt{1 + \epsilon}) + \chi\left(\frac{1}{\sqrt{1+\epsilon}}Z_{1,\epsilon}, \ldots, \frac{1}{\sqrt{1+\epsilon}}Z_{n,\epsilon} \sqcup \frac{1}{\sqrt{1+\epsilon}}Z_{n+1,\epsilon}, \ldots, \frac{1}{\sqrt{1+\epsilon}}Z_{n+m,\epsilon}\right) \\
&=  \frac{n+m}{2} \log(1 + \epsilon) + \frac{n+m}{2} \log(2\pi e) + \frac{1}{2} \log\left(   \det\left( \left(1 - \frac{1}{1+\epsilon}\right) I_{n+m} +    \frac{1}{1 + \epsilon}A  \right)\right) \\
&=  \frac{n+m}{2} \log(2\pi e) + \frac{1}{2} \log\left(   \det\left( \epsilon I_{n+m} +     A  \right)\right).
\end{align*}
As $A$ is a positive matrix and thus diagonalizable, we know that
\[
\det\left( \epsilon I_{n+m} +     A  \right) = \epsilon^{\mathrm{nullity}(A)} p(\epsilon)
\]
where $p$ is a polynomial of degree $\mathrm{rank}(A)$ with real coefficients that does not vanish at 0.  Consequently, we obtain that
\begin{align*}
&\delta( S_1, \ldots, S_n \sqcup S_{n+1}, \ldots, S_{n+m} ) \\
&= n+m + \limsup_{\epsilon \to 0^+} \frac{\frac{n+m}{2} \log(2\pi e) + \frac{1}{2} \log(\epsilon^{\mathrm{nullity}(A)}p(\epsilon))}{|\log(\sqrt\epsilon)|} \\
&= n+m + \limsup_{\epsilon \to 0^+} \frac{\frac{n+m}{2} \log(2\pi e) + \frac12\mathrm{nullity}(A) \log(\epsilon) + \frac{1}{2}  \log(p(\epsilon))}{|\log(\sqrt\epsilon)|} \\
&= n+m - \mathrm{nullity}(A) = \mathrm{rank}(A)
\end{align*}
as desired.
\end{proof}

\begin{rem}
Let $(S, T)$ be a bi-free central limit distribution with variances 1 and covariance $c \in [-1,1]$.  Hence
\[
\delta(S \sqcup T) = \begin{cases}
2 & \text{if } c \neq\pm 1 \\
1 & \text{if } c =\pm 1 \\
\end{cases}.
\]
In particular, the support of the joint distribution of $(S, T)$ has dimension $\delta^*(S\sqcup T)$: indeed, if $c \neq \pm 1$ then $(S, T)$ has joint distribution with support $[-2, 2]^2 \subset \bR^2$ by \cite{HW2016}, while otherwise it is supported on the line $y = cx$.
This adds validation to the name ``bi-free microstate entropy dimension''.
\end{rem}

\section{Microstate Bi-Free Entropy for Non-Bi-Partite Systems}
\label{sec:Gen}
In the section, we will discuss our notion of microstate bi-free entropy to non-bi-partite systems where further complications arise.
To do this, we will find it useful to take an approach from operator-valued bi-free probability.
We refer the reader to \cite{CNS2015-1} rather than reintroduce the entire setting here.

Let $(\C, \varphi)$ be a non-commutative probability space and let $B$ be a unital algebra.
Then $\C\otimes B$ can be viewed as a $B$-$B$-bimodule where
\[
	b \cdot (a \otimes b') = a \otimes bb', \qqand (a \otimes b') \cdot b = a \otimes b'b
\]
for $b, b' \in B$ and $a \in \C$.
Let us denote by $L_b$ and $R_b$ the left and right actions of $b$ above.
If $p_B : \C\otimes B \to B$ is defined by
\[
	p_B(a \otimes b) = \varphi(a) b,
\]
then $\L(\C \otimes B)$ is a $B$-$B$-non-commutative probability space with left and right $B$-operators $L_b$ and $R_b$ respectively and expectation $E : \L(\C \otimes B) \to B$ defined by
\[
	E(Z) = p_B(Z(1_\C \otimes 1_B))
\]
for all $Z \in \L(\C \otimes B)$.
Let $\L(\C \otimes B)_\ell$ denote all elements of $\L(\C \otimes B)$ that commute with elements of $\{R_b \, \mid \, b \in B\}$ and let $\L(\C \otimes B)_r$ denote all elements of $\L(\C \otimes B)$ that commute with elements of $\{L_b \, \mid \, b \in B\}$.  Therefore, if $X, Y \in \C$ and $b \in B$, we can define $L(X \otimes b) \in \L(\C \otimes B)_\ell$ and $R(Y \otimes b) \in \L(\C \otimes B)_r$ via
\[
	L(X \otimes b)(a \otimes b') = Xa \otimes bb' \qqand R(Y \otimes b)(a \otimes b') = Ya \otimes b'b.
\]
for all $a \in \C$ and $b' \in B$.

Our current approach to matricial microstates has been to find matrices in $\M_d$ for which the moments of the appropriate left or right multiplication operators computed against $\tau_d(\cdot I_d)$ have been approximately correct.
Since $\L(\M_d) \cong \M_d \otimes \M_d^\op$ via $L(A)R(B) \mapsto A\otimes B^\op$, we may view this in the above setting with $\C = \bC$.
If we replace $\bC$ by some larger matrix algebra, we introduce non-commutativity between the left and the right approximates.

Indeed for fixed $d_1, d_2 \in \bN$, if we identify $\L(\M_{d_1}\otimes \M_{d_2}) \cong \L(\M_{d_1}) \otimes \L(\M_{d_2}) \cong \L(\M_{d_1}) \otimes \M_{d_2} \otimes \M_{d_2}^\op$, we find
\[
	\L(\M_{d_1}\otimes \M_{d_2})_\ell \cong \L(\M_{d_1}) \otimes \M_{d_2} \otimes \bC \qqand \L(\M_{d_1}\otimes \M_{d_2})_r \cong \L(\M_{d_1}) \otimes \bC \otimes \M_{d_2}^\op.
\]
In particular, the pair of faces $(L(\M_{d_1}\otimes \M_{d_2}), R(\M_{d_1}\otimes \M_{d_2}))$ in $\L(\M_{d_1}\otimes \M_{d_2})$ is isomorphic to the pair of faces
\[
	\paren{(\M_{d_1}\otimes \M_{d_2}\otimes \bC, \M_{d_1}\otimes \bC \otimes \M_{d_2}^\op)}
\]
in $\M_{d_1}\otimes \M_{d_2}\otimes \M_{d_2}^\op$.  Note that the state becomes $\tau_{d_1}\otimes(\tau_{d_2}\circ m)$, where $m(B_1\otimes B_2^\op) = B_1B_2$ is the multiplication map.
These faces are each as measure spaces isomorphic to $\M_{d_1}\otimes \M_{d_2}$.
We have for $A\otimes B \in \M_{d_1} \otimes \M_{d_2}$, $L(A\otimes B) = A\otimes B\otimes I_{d_2}$ while $R(A\otimes B) = A\otimes I_{d_2} \otimes B^\op$.

Using the above constructions, we postulate the following generalization of our microstate bi-free entropy to the non-tracially bi-partite setting.
Let $(\A, \varphi)$ be a C$^*$-non-commutative probability space and let $X_1,\ldots, X_n, Y_1, \ldots, Y_m$ be self-adjoint operators in $\A$, where we will consider $X_1, \ldots, X_n$ as left variables and $Y_1, \ldots, Y_m$ as right variables.
We desire to approximate $X_1, \ldots, X_n$ by $A_1, \ldots, A_n \in (\M_{d_1}\otimes\M_{d_2}\otimes \bC)^\sa$, and $Y_1, \ldots, Y_m$ by $B_1, \ldots, B_m \in (\M_{d_1}\otimes\bC\otimes \M_{d_2}^\op)^\sa$.
For $M,d \in \bN$ and $R, \epsilon> 0$, let $\Gamma_R(\smbfe; M, d_1, d_2, \epsilon)$ denote the set of all $(n+m)$-tuples $(A_1, \ldots, A_n, B_1, \ldots, B_m) \in ((M_{d_1} \otimes M_{d_2})^\sa)^{n+m}$ such that $\left\|A_i\right\|, \left\|B_j\right\| \leq R$ for all $1 \leq i \leq n$ and $1 \leq j \leq m$, and
\[
	\left|\varphi(Z_{k_1} \cdots Z_{k_p}) - \tau_{d_1}\otimes(\tau_{d_2}\circ m)(C_{k_1} \cdots C_{k_p})\right| < \epsilon
\]
for all $i_1, \ldots, i_p \in \{1,\ldots, n+m\}$ and $1 \leq p \leq M$ where 
\[
Z_{k} = \begin{cases}
X_k & \text{if } k \in\{1\ldots, n\} \\
Y_{k-n} & \text{if }k \in \{n+1,\ldots, n+m\}
\end{cases}
\qqand 
C_{k} = \begin{cases}
L(A_k) & \text{if } k \in\{1\ldots, n\} \\
R(B_{k-n}) & \text{if }k \in \{n+1,\ldots, n+m\}
\end{cases}.
\] 

\begin{defn}
	\label{defn:micro-bi-free-gen}
	Using the above notation, if $\lambda_{d_1d_2,n+m}$ denotes the Lebesgue measure on $(\M_{d_1d_2}^{\sa})^{n+m}$, define
	\begin{align*}
		\chi_R(\smbfe; M, d_1, d_2, \epsilon) &= \log\left( \lambda_{d_1d_2,n+m}\left(\Gamma_R(\smbfe; M, d_1, d_2, \epsilon)  \right)   \right) \\
		\chi_R(\smbfe; M, \epsilon) &= \limsup_{(d_1, d_2) \to \infty} \frac{1}{(d_1d_2)^2}\chi_R(\smbfe; M, d_1, d_2, \epsilon) \\
		& \qquad + \frac{1}{2} (n+m) \log(d_1d_2) \\
		\chi_R(\smbfe) &=  \inf\{\chi_R(\smbfe; M, \epsilon) \, \mid \, M \in \bN, \epsilon > 0\} \\
		\chi(\smbfe) &= \sup_{R > 0} \chi_R(\smbfe).
	\end{align*}
	The quantity $\chi(\smbfe) \in [-\infty, \infty)$ will be called the \emph{microstate bi-free entropy of $X_1, \ldots, X_n \sqcup Y_1, \ldots, Y_m$}.
	\end{defn}

\begin{rem}
Of course, one must specify what is meant by $\limsup_{(d_1, d_2) \to \infty}$.
There are many possible definitions (i.e. $d_1 + d_2 \geq K$ for sufficiently large $K$ or $\min\{d_1, d_2\} \geq K$ for sufficiently large $K$).
It may even be possible that if $d_2$ is sufficiently large, then there is no difference using $d_1 = 2$ or $d_1 > 2$.
Of course, the real question is, ``How do the sets $\Gamma_R(\smbfe; M, d_1, d_2, \epsilon)$ behave as $d_1$ and $d_2$ vary?''
\end{rem}

\begin{rem}
We have seen above that the $d_1 = 1$ case may only model tracially bi-partite systems.
However, adding the flexibility that $d_1 > 1$ appears to reduce these restrictions.
Specifically, the only obvious restriction is that $\tau_{d_1}\otimes(\tau_{d_2}\circ m)$ is self-adjoint so we may only approximate distributions of left and right operators with respect to self-adjoint states.
This is not a cumbersome restriction since we are already assuming that the operators and $\varphi$ are self-adjoint.
\end{rem}

\begin{rem}
	We note that many results in this paper may be simply extended to apply to Definition \ref{defn:micro-bi-free-gen}; specifically Proposition \ref{prop:micro-subadditive}, Proposition \ref{prop:not-plus-infinity}, Proposition \ref{prop:R-does-not-matter}, Proposition \ref{prop:transforms}, and computations like those in Theorem \ref{thm:entropy-pair-of-semis}.
	(Note, though, that the computations in Theorem~\ref{thm:entropy-pair-of-semis} provided an upper bound on the entropy of a semicircular system, while the lower bound came from Theorem~\ref{thm:micro-converting-rights-to-lefts} which does not have an analogue in this setting.)
	One point of interest is there is less of a connection between these bi-free microstates and known free microstates (e.g. the argument used in the proof of Theorem \ref{thm:micro-converting-rights-to-lefts} and Theorem \ref{thm:micro-bi-free-additive} are no longer clear).
	Of course knowledge that $\limsup$ can be replaced with $\liminf$ in the definition of microstate free entropy (Definition \ref{defn:micro-free}) immediately implies the quantities in Definition \ref{defn:micro-bi-free-gen} agree with those in Definition \ref{defn:micro-free} when $n = 0$ or $m = 0$.
	However, when $n,m > 1$, it is not clear how to obtain these generalized bi-free microstates from free microstates of the left variables and microstates of the right variables.
\end{rem}

\begin{rem}
	We have made the choice to find approximates in the algebra $\M_{d_1}\otimes \M_{d_2}\otimes \M_{d_2}^\op$.
	One may consider replacing $\M_{d_1}$ by some other algebra -- possibly of infinite dimension -- to allow more flexibility.
	While it then becomes easier to find approximates, it becomes less clear how to treat the measure of the set of approximates.
	Nonetheless, \cite{S2016-3} has argued that this is the correct constructs for the bi-free analogue of random matrices and thus the correct construct for bi-free microstates.
\end{rem}

\section{Open Questions}
\label{sec:Ques}

We conclude this paper with several important and intriguing questions raised in this paper in addition to the question of whether results in bi-free entropy may be applied to obtain results pertaining to von Neumann algebras.

To begin, as we are dealing with tracially bi-partite systems, one of the most natural questions is the following.
\begin{ques}
\label{ques:left-to-right-always-works}
Given a tracially bi-partite family of operators $\left(\{X_i\}^n_{i=1}, \{Y_j\}^m_{j=1}\right)$, is there always a single-sided version as in Theorem~\ref{thm:micro-converting-rights-to-lefts} for which the stated inequality is an equality?
If not, does taking a supremum over all systems which may stand on the left hand side lead to equality?
\end{ques}
One can produce examples by making a ``poor choice'' where the inequality is strict: for example, if $(X, Y)$ is a pair of classically independent semi-circular operators (of non-zero variance), letting $X'$ and $Y'$ in the parlance of that theorem merely be $X$ and $Y$ themselves, the hypotheses of the theorem are satisfied and
$$-\infty = \chi(X, Y) < \chi(X \sqcup Y).$$

The answer to Question \ref{ques:left-to-right-always-works} is affirmative for the bi-free central limit distributions and for independent distributions.
A general answer to Question \ref{ques:left-to-right-always-works} would be of interest as it directly relates the free and bi-free non-microstate entropies and could answer the following.

\begin{ques}
\label{ques:transform-lefts-and-rights}
Is there an analogue of Proposition \ref{prop:transforms} where the transformation can intermingle left and right variables simultaneously?
\end{ques}
Of course Question \ref{ques:transform-lefts-and-rights} would be of interest as it would provide a greater flexibility in handling this entropy theory.  However, there have been no instances in bi-free probability where right operators can intermingle with left operators and the resulting operators still behaves like left operators.

Question \ref{ques:left-to-right-always-works} also relates to the following question.
\begin{ques}
\label{ques:integration}
Let $(X,Y)$ be a bi-partite pair with joint distribution $\mu$.  Is there an integration formula involving $\mu$ to compute $\chi(X \sqcup Y)$?
\end{ques}
Question \ref{ques:integration} arises from the integration formula established in \cite{V1994}: if $X$ is a self-adjoint operator with distribution $\mu$, then the free entropy of $X$ is
\[
\chi(X) = \frac{1}{2} \log(2\pi) + \frac{3}{4} + \int_\bR \int_\bR \log|s-t| \, d\mu(s) \, d\mu(t).
\]
To determine $\chi(X \sqcup Y)$ for a tracially bi-partite pair $(X, Y)$, one must understand the microstates $(A, B) \in (\M_d(\bC)^{\sa})^2$ that are good approximates for $(X, Y)$.  If $(A', B') \in (\M_d(\bC)^{\sa})^2$ is another microstate that is a good approximate of $(X, Y)$, then \cite{V1993} implies that $\left\|A' - A\right\|_2$ and $\left\|B' - B\right\|_2$ are small.  Therefore, for any $n,m \in \bN$ and any unitary $U \in \M_d(\bC)$ we have that $\left\|A^n U^*B^m U - (A')^n U^* (B')^m U\right\|_2$ is small in norm (as the operator norm of microstates will be bounded by some $R$).  Therefore, an understanding of microstates of the pair $(X, Y)$ can be reduced to understanding the vector-valued random variable on the unitary group of $\M_d(\bC)$ defined by
\[
U \mapsto (A^{n_1} U^*B^{m_1} U, \ldots, A^{n_k} U^*B^{m_k} U)
\]
for every $k \in \bN$ and every distinct $(n_1, m_1), \ldots, (n_k, m_k) \in \bN^2$.  When $k = 1$, the characteristic function of this random variable may be computable using the Harish-Chandra-Itzykson-Zuber integral formula, but deriving the necessary information from the characteristic function to describe the microstate bi-free entropy appears difficult.

Of course, an affirmative answer to both Questions \ref{ques:left-to-right-always-works} and \ref{ques:integration} would enable the computation of the microstate free entropy of certain pairs of self-adjoint operators via an integration formula.  Thus we do not expect an affirmative answer to both Questions \ref{ques:left-to-right-always-works} and \ref{ques:integration}.

Other natural questions pertaining to this microstate bi-free entropy are
\begin{ques}
Is Theorem \ref{thm:micro-bi-free-additive} true without the $\limsup_{d \to \infty}$ being a limit condition?
\end{ques}
which clearly will follow from
\begin{ques}
Can $\limsup$ be replaced with $\liminf$ in Definition \ref{defn:micro-bi-free}?
\end{ques}
As these questions have been extremely difficult even in the free setting, we presume they will have equal if not greater difficulty in the bi-free setting.  Another natural question to extend to the bi-free setting is the following.
\begin{ques}
Under the assumption the operators under consideration have microstates, does the microstate bi-free entropy from \cite{CS2017} agree with the above non-microstate bi-free entropy for  tracially bi-partite collections? 
\end{ques}
In the free setting, \cite{BCG2003} first showed that the microstate free entropy is always less than the non-microstate free entropy.  Thus perhaps a good starting point would be a bi-free version of \cite{BCG2003}.  Of course much progress was made towards the converse in \cite{D2016}.

Although our proofs do not require the following, it would be nice to be able to answer the following question:
\begin{ques}
Is the bi-free analogue of the Wasserstein metric actually a metric?
\end{ques}

Finally, as most of this paper deals only with the tracially bi-partite setting, we ask the following.
\begin{ques}
\label{ques:non-bi-partite}
Are the quantities in Definition \ref{defn:micro-bi-free-gen} finite when $n,m > 0$?  Furthermore, does Definition \ref{defn:micro-bi-free-gen} agree with Definition \ref{defn:micro-bi-free} for tracially bi-partite systems?
\end{ques}
An answer to Question \ref{ques:non-bi-partite} would enable us to extend the notion of microstate bi-free entropy to non-bi-partite systems thereby allowing a richer theory and demonstrating the notions in this paper are the correct extensions of microstate free entropy to the bi-free setting.

\section*{Acknowledgements}

The authors would like to thank Dimitri Shlyakhtenko for many conversations and ideas that led to the computations in Section \ref{sec:Calc}.

\end{document}